\newcommand{\be}{\begin{equation}}
\newcommand{\ee}{\end{equation}}
\newcommand{\bea}{\begin{eqnarray}}
\newcommand{\eea}{\end{eqnarray}}
\newcommand{\beas}{\begin{eqnarray*}}
\newcommand{\eeas}{\end{eqnarray*}}
\newcommand{\ba}{\begin{array}}
\newcommand{\ea}{\end{array}}
\newcommand{\<}  {\langle}
\renewcommand{\>}{\rangle}
\newcommand{\field}[1]{\mathbb{#1}}
\newcommand{\supp}{{\mathrm supp}}
\newcommand{\grad}{\nabla}
\renewcommand{\div}{{\mathrm div}}
\newcommand{\curl}{{\mathrm curl}}
\newcommand{\G}{\Gamma}
\newcommand{\g}{\gamma}
\newcommand{\eps}{\varepsilon}
\newcommand{\bolda}{{\mathbf a}}
\newcommand{\bn}{{\mathbf n}}
\newcommand{\bu}{{\mathbf u}}
\newcommand{\bv}{{\mathbf v}}
\newcommand{\bw}{{\mathbf w}}
\newcommand{\bx}{{\mathbf x}}
\newcommand{\bX}{{\mathbf X}}
\newcommand{\bH}{{\mathbf H}}
\newcommand{\bL}{{\mathbf L}}
\newcommand{\bcurl}{{\mathbf curl}}
\newcommand{\bzero}{{\mathbf 0}}
\newcommand{\bnu}{\hbox{\mathversion{bold}$\nu$}}
\newcommand{\bphi}{\hbox{\mathversion{bold}$\phi$}}
\newcommand{\CE}{{\cal E}}
\newcommand{\CI}{{\cal I}}
\newcommand{\CL}{{\cal L}}
\newcommand{\CP}{{\cal P}}
\newcommand{\bCP}{\hbox{\mathversion{bold}$\cal P$}}
\newcommand{\CT}{{\cal T}}
\newcommand{\stack}[2]{\mathrel{\mathop{#2}\limits_{\scriptstyle #1}}}
\newcommand{\tH}{\tilde H}
\newcommand{\ttu}{\widetilde{\widetilde{u^\circ}}}
\newcommand{\ttv}{\widetilde{\widetilde{v^\circ\,}}}
\newtheorem{theorem}{Theorem} [section]
\newtheorem{lemma}{Lemma} [section]
\newtheorem{prop}{Proposition} [section]
\newtheorem{remark}{Remark} [section]
\newenvironment{proof}{\noindent\textbf{Proof.}\ }
              {\nopagebreak\hbox{ }\hfill$\Box$\bigskip}
\newcommand{\qed}{\nopagebreak\hbox{ }\hfill$\Box$\bigskip}
\title{A new $\bH(\div)$-conforming $p$-interpolation operator in two dimensions
       \thanks{Supported by EPSRC under grant no. EP/E058094/1.}}
\author{Alexei Bespalov
\thanks{Department of Mathematical Sciences, Brunel University,
        Uxbridge, West London UB8 3PH, UK.
        Email: {\tt albespalov@yahoo.com}}
        \and
        Norbert Heuer
\thanks{Facultad de Matem\'aticas, Pontificia Universidad Cat\'olica de Chile,
        Avenida Vicu\~na Mackenna 4860, Santiago, Chile.
        Email: {\tt nheuer@mat.puc.cl}}
        }
\begin{document}
\date{}
\maketitle

\begin{abstract}
In this paper we construct a new $\bH(\div)$-conforming projection-based
$p$-interpolation operator that assumes only $\bH^r(K) \cap \tilde\bH^{-1/2}(\div,K)$-regularity
($r > 0$) on the reference element (either triangle or square) $K$.
We show that this operator is stable with respect to polynomial degrees and
satisfies the commuting diagram property. We also establish an estimate for the
interpolation error in the norm of the space $\tilde\bH^{-1/2}(\div,K)$,
which is closely related to the energy spaces for boundary integral formulations
of time-harmonic problems of electromagnetics in three dimensions.

\bigskip
\noindent
{\em Key words}: $p$-interpolation, error estimation, Maxwell's equations,
                 boundary element method

\noindent
{\em AMS Subject Classification}: 65N15, 41A10, 65N38
\end{abstract}

%%%%%%%%%%%%%%%%%%%%%%%%%%%%%%%%%%%%%%%%%%%%%%%%%%%%%%%%%%%%%%%%%%%%%%%%%%%%%%%%
\section{Introduction and main results} \label{sec_intro}
\setcounter{equation}{0}

This paper addresses the problem of $\bH(\div)$-conforming interpolation
of low-regular vector fields
by high order polynomials.
Corresponding $p$-interpolation operators are relevant for the analysis of high order
{\em boundary element} approximations for time-harmonic problems of electromagnetics.

Aiming at high-order {\em finite element} (FE) approximations of Maxwell's equations,
Demkowicz and Babu{\v s}ka \cite{DemkowiczB_03_pIE}
introduced and analysed two projection-based $p$-inter\-po\-la\-ti\-on operators
satisfying the commuting diagram property (de Rham diagram).
These are the $H^1$-conforming interpolation operator
$\Pi^{1}_p:\, H^{1+r}(K) \rightarrow \CP_p(K)$ and
the $\bH(\curl)$-conforming interpolation operator
$\Pi^{\curl}_p:\, \bH^r(K) \cap \bH(\curl,K) \rightarrow \bCP^{\rm Ned}_p(K)$;
here $r > 0$ in both cases, $K$ is the reference element (either triangle or square),
$\CP_p(K)$ is the set of polynomials of degree $\le p$ on $K$,
and $\bCP^{\rm Ned}_{p}(K)$ is the $\bH(\curl)$-conforming (first) N{\' e}d{\' e}lec
space of degree $p$ (precise definitions of all involved Sobolev spaces and polynomial sets
are given in Section~\ref{sec_spaces} below).

In 2D, the operators curl and $\div$ are isomorphic.
The corresponding polynomial set isomorphic to the
N{\' e}d{\' e}lec space $\bCP^{\rm Ned}_{p}(K)$ is
the Raviart-Thomas (RT) space denoted by $\bCP^{\rm RT}_p(K)$.
Therefore, the results of \cite{DemkowiczB_03_pIE} related to the operator $\Pi^{\curl}_p$
can be used also in the $\bH(\div)$-conforming settings
(we will denote the corresponding $\bH(\div)$-conforming projection-based
interpolation operator by $\Pi^{\div}_p$). In particular,
given a vector field $\bu \in \bH^r(K) \cap \bH(\div,K)$ with $r > 0$,
the interpolant $\tilde\bu^p = \Pi^{\div}_p\,\bu \in \bCP^{\rm RT}_p(K)$
is defined as the sum of three terms:
\be \label{Old_E^p}
    \tilde\bu^p = \bu_1 + \bu^p_2 + \tilde\bu^p_3,
\ee
where $\bu_1$ is a lowest order interpolant, $\bu^p_2$ is the sum of edge
interpolants, and $\tilde\bu^p_3$ is the interior interpolant (a more detailed
description of these interpolants is given in Section~\ref{sec_inter_old}).
As follows from \cite{DemkowiczB_03_pIE}, the following diagram commutes:
\be \label{Old_deRham}
    \ba{ccccc}
    H^{1+r}(K)                & \stackrel{\bcurl}{\longrightarrow} &
    \bH^r(K) \cap \bH(\div,K) & \stackrel{\div}{\longrightarrow}   & L^2(K)
    \cr
    \quad\left\downarrow{\Large\strut}\right.\,\Pi^1_p         &
                                                               &
    \qquad\left\downarrow{\Large\strut}\right.\,\Pi^{\div}_p &
                                                               &
    \qquad\left\downarrow{\Large\strut}\right.\,\Pi^{0}_{p-1}
    \cr
    \CP_{p}(K)          & \stackrel{\bcurl}{\longrightarrow} &
    \bCP^{\rm RT}_p(K)  & \stackrel{\div}{\longrightarrow}   & \CP_{p-1}(K),
    \ea
\ee
where $\Pi^0_p:\, L^2(K) \rightarrow \CP_p(K)$ denotes the standard
$L^2$-projection onto the set of polynomials $\CP_p(K)$.

The commuting diagram property, and the corresponding $p$-interpolation error estimates, have
immediate applications to the analysis of high-order FE discretisations
of time-harmonic Maxwell's equations. In particular, these results are critical to
prove the discrete compactness property, which in turn implies the convergence of FE
approximations for Maxwell's equations, as well as for the error analysis
(see \cite{BoffiDC_03_DCp,BoffiCDD_06_Dhp,Hiptmair_DCp,BespalovH_OEE,BoffiCDDH_DCp}).
We note that classical N{\' e}d{\' e}lec or RT interpolation operators
(see, e.g., \cite{BrezziF_91_MHF}) are not suitable for these purposes,
as they are not stable (with respect to the polynomial degree $p$) for low-regular fields and
do not work equally well for triangular and parallelogram elements.

When time-harmonic problems of electromagnetics are posed in infinite domains
(e.g., outside a scatterer), it is convenient to reformulate them as
a boundary integral equation (on the surface of the scatterer).
The energy spaces for such boundary integral equations (BIE) involve
Sobolev spaces of negative order for both the vector field and its divergence (a typical example
is the space $\bH^{-1/2}(\div,\G)$ in the case of a smooth (closed) surface $\G$).
Then, it is common to use the $\bH(\div)$-conforming boundary elements (e.g., of RT type)
to discretise these BIE. The fundamental problem is that the underlying integral operator
is not coercive, and the convergence analysis of the boundary element methods (BEM)
requires a suitable regular decomposition of the energy space into the space
of divergence-free vector fields and the complementary space, cf. \cite{Buffa_05_RDS}.
In the case of Maxwell's source problem it is possible to use a decomposition, where
the complementary space is regular enough even on non-smooth surfaces.
Then, the $\bH(\div)$-conforming $p$-interpolation operator of Demkowicz and Babu{\v s}ka
is applicable for the convergence and error analysis
of the $p$- and the $hp$-BEM (see \cite{BespalovHH_Chp,BespalovH_hpA}).
However, when considering the boundary integral formulation for the Maxwell eigenvalue
problem, the orthogonal Hodge decomposition of the energy space
(see \cite{BuffaHvPS_03_BEM,BuffaC_03_EFI}) must be used to prove the discrete
compactness property. In this case, the regularity issues on non-smooth
surfaces affect the smoothness of the complementary space and prevent one
from using the known $\bH(\div)$-conforming interpolation operators.
Hence, the aim of this paper is to introduce and analyse a new
$\bH(\div)$-conforming $p$-interpolation operator, which is stable with respect to $p$ and
retains the commuting diagram property analogous to (\ref{Old_deRham}),
but assumes less regularity than $\Pi^{\div}_p$
(namely, $\bH^r(K) \cap \tilde\bH^{-1/2}(\div,K)$-regularity with $r > 0$).
This new interpolation operator will be denoted by $\Pi^{\div,-\frac 12}_p$.

Given a vector field
$\bu \in \bH^r(K) \cap \tilde\bH^{-1/2}(\div,K)$ with $r > 0$, we
define the interpolant $\bu^p = \Pi^{\div,-\frac 12}_p \bu \in \bCP^{\rm RT}_p(K)$
in a similar way as the interpolant
$\tilde\bu^p = \Pi^{\div}_p \bu \in \bCP^{\rm RT}_p(K)$ (see (\ref{Old_E^p})):
\be \label{E^p}
    \bu^p = \bu_1 + \bu^p_2 + \bu^p_3.
\ee
Here, $\bu_1$ and $\bu^p_2$ are exactly the same as for the interpolant
$\Pi^{\div}_p \bu$ (see (\ref{E_1}) and (\ref{E_2^p}), respectively),
whereas $\bu^p_3 \in \bCP^{\rm RT,0}_p(K)$ is determined by
solving the following system of equations:
\bea
     \<\div(\bu - (\bu_1 + \bu_2^p + \bu_3^p)),\div\,\bv\>_{\tH^{-1/2}(K)} = 0
     & \quad &
     \forall \bv \in \bCP^{\rm RT,0}_p(K),
     \label{E_3^p_1}
     \\[5pt]
     \<\bu - (\bu_1 + \bu_2^p + \bu_3^p),\bcurl\,\phi\>_{0,K} = 0
     & \quad &
     \forall \phi \in \CP^{0}_p(K),
     \label{E_3^p_2}
\eea
where $\<\cdot,\cdot\>_{\tH^{-1/2}(K)}$ and $\<\cdot,\cdot\>_{0,K}$ denote
the $\tH^{-1/2}(K)$- and the $\bL^2(K)$-inner products respectively.

It is easy to see that our construction of the interpolation operator $\Pi^{\div,-\frac 12}_p$
is much in the spirit of \cite{DemkowiczB_03_pIE}. However, we stress the use
of the $\tH^{-1/2}$-inner product in (\ref{E_3^p_1}), which is more
natural in the boundary element settings (this is in contrast to the $L^2$-inner
product employed in the definition of $\Pi^{\div}_p$, see (\ref{Old_E_3^p_1})).
Thus, the $\tH^{-1/2}(K)$-inner product has to be written in an appropriate explicit form.
Of particular importance for our analysis is the following property
of the $\tH^{-1/2}(K)$-inner product $\<u,v\>_{\tH^{-1/2}(K)}$:
for a constant function $v$, it reduces to the $L^2(K)$-inner product, i.e.,
\be \label{key_ip_property}
    \<u,1\>_{\tH^{-1/2}(K)} = \<u,1\>_{0,K} \qquad \forall\, u \in \tH^{-1/2}(K).
\ee
An inner product satisfying this property is presented in the Appendix
(see Lemma~\ref{lm_tilde-1/2-norm} and Lemma~\ref{lm_tilde_-1/2_ip}).

In the following three theorems we formulate the main results of the paper --
the properties of the operator $\Pi^{\div,-\frac 12}_p$
(all proofs are given in Section~\ref{sec_proofs} below).

The first theorem justifies the definition of the operator and
states its continuity.

\begin{theorem} \label{thm_stab}
For $r > 0$ the operator
\[
  \Pi_p^{\div,-\frac 12}:\;
  \bH^r(K) \cap \tilde\bH^{-1/2}(\div,K) \rightarrow
  \bL^2(K) \cap \tilde\bH^{-1/2}(\div,K)
\]
is well defined and bounded, with its operator norm being independent of $p$,
i.e., there exists a constant $C>0$ independent of $p$ (but depending on $r$)
such that
\be \label{stab}
    \Big\|\Pi_p^{\div,-\frac 12}\Big\|_{\CL} \le C,
\ee
where $\|\cdot\|_{\CL}$ is the operator norm in the space
$\CL\Big(\bH^r(K) \cap \tilde\bH^{-1/2}(\div,K),
 \bL^2(K) \cap \tilde\bH^{-1/2}(\div,K)\Big)$.
Moreover, the operator $\Pi_p^{\div,-\frac 12}$
preserves polynomial vector fields, i.e., $\Pi_p^{\div,-\frac 12} \bv_p = \bv_p$
for any $\bv_p \in \bCP^{\rm RT}_p(K)$.
\end{theorem}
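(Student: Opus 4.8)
The plan is to treat the three components $\bu_1$, $\bu_2^p$, $\bu_3^p$ separately and to reduce the whole statement to the analysis of the interior solve for $\bu_3^p$. Since $\bu_1$ and $\bu_2^p$ are defined exactly as in \cite{DemkowiczB_03_pIE} (see (\ref{E_1}) and (\ref{E_2^p})), I would invoke the results of that paper together with the trace theory for $\bH^r(K)\cap\tilde\bH^{-1/2}(\div,K)$ to assert that the lowest-order interpolant $\bu_1$ and the sum of edge interpolants $\bu_2^p$ are well defined and bounded uniformly in $p$ in terms of $\bu$. Thus the data for the interior problem, namely $\boldf := \bu - \bu_1 - \bu_2^p$, is controlled by $\|\bu\|_{\bH^r(K)\cap\tilde\bH^{-1/2}(\div,K)}$ with a $p$-independent constant, and it remains to analyse the system (\ref{E_3^p_1})--(\ref{E_3^p_2}) for $\bu_3^p\in\bCP^{\rm RT,0}_p(K)$.

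The key observation is that this system decouples. Equation (\ref{E_3^p_1}) only sees $\div\bu_3^p$ and asserts that $\div\bu_3^p$ is the $\tH^{-1/2}(K)$-orthogonal projection of $\div\boldf$ onto the polynomial space $Q_p := \div\,\bCP^{\rm RT,0}_p(K)$, which consists of mean-zero polynomials since fields in $\bCP^{\rm RT,0}_p(K)$ have vanishing normal trace. This projection is uniquely defined because $\<\cdot,\cdot\>_{\tH^{-1/2}(K)}$ is a genuine inner product on $Q_p$. On the other hand, the kernel of $\div$ on $\bCP^{\rm RT,0}_p(K)$ is exactly $\bcurl\,\CP^{0}_p(K)$, and equation (\ref{E_3^p_2}) fixes the projection of $\bu_3^p$ onto this divergence-free subspace: since $\bcurl$ is injective on $\CP^{0}_p(K)$, the $\bL^2(K)$-inner product is positive definite on $\bcurl\,\CP^{0}_p(K)$, so this component is uniquely determined as well. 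Choosing any fixed $\div$-lift of the prescribed divergence and correcting by the divergence-free part fixed by (\ref{E_3^p_2}) yields existence and uniqueness of $\bu_3^p$; this establishes that $\Pi_p^{\div,-\frac 12}$ is well defined.

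For the stability estimate (\ref{stab}) I would bound the two parts of $\bu_3^p$ separately. The divergence part is immediate: since $\div\bu_3^p$ is an orthogonal projection in the $\tH^{-1/2}(K)$-inner product, $\|\div\bu_3^p\|_{\tH^{-1/2}(K)}\le\|\div\boldf\|_{\tH^{-1/2}(K)}$ uniformly in $p$, and the $\bL^2(K)$-norm of the divergence-free component is controlled by (\ref{E_3^p_2}) through the $\bL^2(K)$-orthogonal projection, again with norm one. The remaining and delicate task is to bound the full $\bL^2(K)$-norm of $\bu_3^p$, that is, to reconstruct the field from its divergence $\div\bu_3^p\in Q_p$ with a constant independent of $p$. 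I expect this to be the main obstacle: it requires a polynomial-degree-robust right inverse of the divergence on the interior Raviart--Thomas space, together with the compatibility property (\ref{key_ip_property}) to match the mean-value behaviour of the divergence with the lowest-order component. This step will rely on the stable polynomial liftings and on the explicit form of the $\tH^{-1/2}(K)$-inner product established earlier in the paper; combined with the $p$-uniform bounds on $\bu_1$, $\bu_2^p$ and on $\boldf$, it yields (\ref{stab}).

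Finally, the reproduction of polynomials follows from uniqueness. For $\bu=\bv_p\in\bCP^{\rm RT}_p(K)$ the lowest-order and edge interpolants reproduce the normal-trace degrees of freedom, so $\bv_p-\bu_1-\bu_2^p\in\bCP^{\rm RT,0}_p(K)$. Taking $\bu_3^p:=\bv_p-\bu_1-\bu_2^p$ makes the residual $\bu-(\bu_1+\bu_2^p+\bu_3^p)$ vanish identically, so (\ref{E_3^p_1})--(\ref{E_3^p_2}) hold trivially; by the uniqueness proved above this is the interpolant, whence $\Pi_p^{\div,-\frac 12}\bv_p=\bv_p$.
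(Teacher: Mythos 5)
Your overall architecture matches the paper's: the same three-term decomposition, the same reduction to the interior solve, and the same splitting of $\bu_3^p$ into a divergence-carrying part and a divergence-free part fixed by (\ref{E_3^p_2}) (the paper writes this as the discrete Helmholtz decomposition $\bu_3^p=\bv_p+\bcurl\,\phi_p$ with $\bv_p\in\bCP^{\rm RT,0}_p(K)$ orthogonal to $\bcurl\,\CP^0_p(K)$). Your treatment of $\bu_1$ and $\bu_2^p$, the projection bound for $\div\,\bu_3^p$, the bound on the curl component via (\ref{E_3^p_2}), and the polynomial-reproduction argument are all consistent with the paper.

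However, there is a genuine gap exactly where you flag ``the main obstacle'': the $p$-uniform $\bL^2(K)$ bound on the divergence-carrying component of $\bu_3^p$. You state that this requires a polynomial-degree-robust right inverse of the divergence on $\bCP^{\rm RT,0}_p(K)$ and that it ``will rely on stable polynomial liftings'', but you neither construct such an inverse nor prove the required bound, and the whole of (\ref{stab}) hinges on it. The paper supplies precisely this ingredient as Lemma~\ref{lm_Friedrichs}(i), the improved discrete Friedrichs inequality $\|\bv_p\|_{0,K}\le C\,\|\div\,\bv_p\|_{\tH^{-1}(K)}$ for $\bv_p\in\bCP^{\rm RT,0}_p(K)$ satisfying $\<\bv_p,\bcurl\,\varphi\>_{0,K}=0$ for all $\varphi\in\CP^0_p(K)$; its proof builds the right inverse $\CT\psi=R\psi-\bcurl\,\tilde\phi$ from the regularized Poincar\'e operator $R$ of Costabel--McIntosh (bounded $H^{-1}(K)\to\bL^2(K)$ and polynomial-preserving), corrected by the curl of a polynomial extension of a boundary potential, using Lemma~\ref{lm_trace} and the extension result of Babu{\v s}ka et al. Without this lemma or an equivalent, your argument does not close. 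Note also that the compatibility property (\ref{key_ip_property}), which you cite as part of the missing step, is not what is needed here: in the paper it enters only in the proof of the commuting diagram (Theorem~\ref{thm_deRham}). The tool you actually need is the negative-norm discrete Friedrichs inequality, which combined with $\|\div\,\bv_p\|_{\tH^{-1}(K)}\le C\,\|\div\,\bu_3^p\|_{\tH^{-1/2}(K)}$ closes the interior step exactly as in the paper.
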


The next theorem states the commuting diagram property analogous to (\ref{Old_deRham}).

\begin{theorem} \label{thm_deRham}
For $r > 0$ the following diagram commutes:
\be \label{deRham}
    \ba{ccccc}
    H^{1+r}(K)                             & \stackrel{\bcurl}{\longrightarrow} &
    \bH^r(K) \cap \tilde\bH^{-1/2}(\div,K) & \stackrel{\div}{\longrightarrow}   & \tH^{-1/2}(K)
    \cr
    \quad\left\downarrow{\Large\strut}\right.\,\Pi^1_p                 &
                                                                       &
    \qquad\left\downarrow{\Large\strut}\right.\,\Pi^{\div,-\frac 12}_p &
                                                                       &
    \qquad\left\downarrow{\Large\strut}\right.\,\Pi^{-1/2}_{p-1}
    \cr
    \CP_{p}(K)          & \stackrel{\bcurl}{\longrightarrow} &
    \bCP^{\rm RT}_p(K)  & \stackrel{\div}{\longrightarrow}   & \CP_{p-1}(K),
    \ea
\ee
where $\Pi^{-1/2}_p:\, \tH^{-1/2}(K) \rightarrow \CP_p(K)$ denotes the
$\tH^{-1/2}$-projector.
\end{theorem}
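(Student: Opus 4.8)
The plan is to verify the two squares of (\ref{deRham}) separately: the right square $\div\circ\Pi^{\div,-\frac12}_p=\Pi^{-1/2}_{p-1}\circ\div$ and the left square $\bcurl\circ\Pi^1_p=\Pi^{\div,-\frac12}_p\circ\bcurl$, establishing the right one first and using it for the left. The guiding observation is that $\bu^p=\bu_1+\bu^p_2+\bu^p_3$ shares its lowest-order and edge parts $\bu_1,\bu^p_2$ with the operator $\Pi^{\div}_p$ of \cite{DemkowiczB_03_pIE}, and that $\Pi^{\div}_p$ is already known to make the analogous diagram (\ref{Old_deRham}) commute; hence every deviation is confined to the interior term $\bu^p_3\in\bCP^{\rm RT,0}_p(K)$, whose defining equation (\ref{E_3^p_1}) is the only place where the $\tH^{-1/2}$-inner product replaces the $L^2$-inner product of (\ref{Old_E_3^p_1}).

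For the right square, note that $\div\bu^p\in\CP_{p-1}(K)$ and recall that $\Pi^{-1/2}_{p-1}(\div\bu)$ is characterised by $\langle\div\bu-\Pi^{-1/2}_{p-1}(\div\bu),v\rangle_{\tH^{-1/2}(K)}=0$ for all $v\in\CP_{p-1}(K)$. I would therefore verify $\langle\div\bu-\div\bu^p,v\rangle_{\tH^{-1/2}(K)}=0$ for all such $v$. Writing $v=c+v_0$ with $c$ constant and $v_0$ of zero mean, I treat the parts separately. Every mean-zero $v_0\in\CP_{p-1}(K)$ is $v_0=\div\bv$ for some $\bv\in\bCP^{\rm RT,0}_p(K)$ (surjectivity of $\div$ from the interior RT space onto mean-zero polynomials), so the orthogonality against $v_0$ is exactly (\ref{E_3^p_1}). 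For the constant $c$ the property (\ref{key_ip_property}) is decisive: it turns $\langle\div(\bu-\bu^p),c\rangle_{\tH^{-1/2}(K)}$ into $c\,\langle\div(\bu-\bu^p),1\rangle_{0,K}=c\int_{\partial K}(\bu-\bu^p)\cdot\bn\,ds$ via the divergence theorem, and this vanishes since $\bu^p_3$ has zero normal trace while $\bu_1+\bu^p_2$ is constructed so that $\int_e(\bu-\bu_1-\bu^p_2)\cdot\bn\,ds=0$ on each edge $e$ of $K$. This gives $\div\bu^p=\Pi^{-1/2}_{p-1}(\div\bu)$.

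For the left square, fix $\phi\in H^{1+r}(K)$ and set $\bw=\bcurl\phi$, so that $\bw\in\bH^r(K)$ and $\div\bw=0$; in particular $\bw$ lies in the domain of $\Pi^{\div,-\frac12}_p$, and the assertion reads $\Pi^{\div,-\frac12}_p\bw=\bcurl(\Pi^1_p\phi)$. Writing $\bw^p:=\Pi^{\div,-\frac12}_p\bw$, I would compare it with the old interpolant $\tilde\bw^p:=\Pi^{\div}_p\bw$: since (\ref{Old_deRham}) already yields $\tilde\bw^p=\bcurl(\Pi^1_p\phi)$, it suffices to show $\bw^p=\tilde\bw^p$, i.e. $\bw^p_3=\tilde\bw^p_3$, the terms $\bu_1,\bu^p_2$ being common to both. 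By the right square applied to $\div\bw=0$ the field $\bw^p$ is divergence-free, and $\tilde\bw^p$ is divergence-free by (\ref{Old_deRham}); hence $\bz:=\bw^p_3-\tilde\bw^p_3\in\bCP^{\rm RT,0}_p(K)$ satisfies $\div\bz=0$. As the two interpolants obey the same curl equation (\ref{E_3^p_2}), subtraction gives $\langle\bz,\bcurl\varphi\rangle_{0,K}=0$ for all $\varphi\in\CP^0_p(K)$. Since a divergence-free field in $\bCP^{\rm RT,0}_p(K)$ is of the form $\bz=\bcurl\varphi_0$ with $\varphi_0\in\CP^0_p(K)$ (exactness of the polynomial de~Rham sequence with vanishing boundary traces), the choice $\varphi=\varphi_0$ forces $\|\bcurl\varphi_0\|_{0,K}^2=0$, so $\bz=\bzero$ and $\bw^p=\tilde\bw^p$.

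The two steps I expect to be delicate are the following. In the right square the constant test direction is precisely the one not reached by (\ref{E_3^p_1}), because divergences of interior RT fields have zero mean; the argument hence genuinely rests on reducing the $\tH^{-1/2}$-pairing against $1$ to an $L^2$-pairing through (\ref{key_ip_property}) and then to a boundary flux controlled by the edge interpolation, and checking that $\bu_1+\bu^p_2$ reproduces this flux from its explicit definition is the main piece of bookkeeping. In the left square the crucial structural input is the exact-sequence identity between divergence-free interior RT polynomials and curls of zero-trace potentials of degree $\le p$; once this is in hand, the uniqueness step closing $\bz=\bzero$ is immediate.
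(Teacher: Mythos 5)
Your proof is correct. The right square is treated essentially as in the paper: the paper also splits the test polynomial $\div\,\bv_p$ into a constant plus a divergence of an interior RT field (it obtains this splitting by writing $\bv_p=\bv_1+\bv_2^p+\bv_3^p$ via the interpolation operator itself, with $\div\,\bv_1=\mathrm{const}$ and $\div\,\bv_2^p=0$, rather than by invoking surjectivity of $\div:\bCP^{\rm RT,0}_p(K)\to\mathring{\CP}_{p-1}(K)$ abstractly, but that surjectivity is exactly what its Lemma~\ref{lm_Friedrichs} proof establishes), kills the second part by (\ref{E_3^p_1}), and reduces the constant part to the boundary flux via (\ref{key_ip_property}) and (\ref{E-E_1}). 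The left square you organize differently: the paper argues componentwise, showing $\bu_1=\bcurl\,g_1$ and $\bu_2^p=\bcurl\,g_2^p$ directly from the definitions, deducing $\div\,\bu_3^p=0$ from (\ref{E_3^p_1}), and then using the discrete Helmholtz decomposition (\ref{stab6_1}) together with a comparison of (\ref{E_3^p_2}) against (\ref{int_3}) to identify $\bu_3^p=\bcurl\,g_3^p$; you instead piggyback on the known commutativity (\ref{Old_deRham}) of $\Pi^{\div}_p$ and show the two interpolants coincide on curl fields, the difference of the interior parts being a divergence-free element of $\bCP^{\rm RT,0}_p(K)$ that is $\bL^2$-orthogonal to all $\bcurl\,\varphi$, $\varphi\in\CP^0_p(K)$, hence zero by exactness. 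Both routes rest on the same structural fact (divergence-free interior RT fields are curls of polynomial bubbles); yours buys a shorter argument at the cost of importing Proposition~\ref{pr_old_properties}, while the paper's is self-contained and makes explicit that each of the three building blocks commutes individually, which is also what its Theorem~\ref{thm_stab} machinery (the decomposition (\ref{stab6_1}) and Lemma~\ref{lm_Friedrichs}) is already set up to deliver.
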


The third theorem provides an error estimate
for the interpolation operator $\Pi_p^{\div,-\frac 12}$
in the norm of the space $\tilde\bH^{-1/2}(\div,K)$, which is closely related
to the energy space for the electric field integral equation (a boundary integral
formulation of Maxwell's equations in 3D).

\begin{theorem} \label{thm_err_estimate}
If $\bu \in \bH^r(\div,K)$ with $r > 0$, then
there exists a positive constant $C$ independent of $\bu$ and $p$ such that
\be \label{err_estimate}
    \|\bu - \Pi^{\div,-\frac 12}_p\,\bu\|_{\tilde\bH^{-1/2}(\div,K)} \le
    C\,p^{-(r+1/2)}\,\|\bu\|_{\bH^r(\div,K)}.
\ee
\end{theorem}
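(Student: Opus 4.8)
The plan is to measure the interpolation error through the graph norm of $\tilde\bH^{-1/2}(\div,K)$, which by the definition of the space equals
\[
 \|\bw\|_{\tilde\bH^{-1/2}(\div,K)}^2 = \|\bw\|_{\tilde\bH^{-1/2}(K)}^2 + \|\div\,\bw\|_{\tH^{-1/2}(K)}^2, \qquad \bw := \bu - \Pi^{\div,-\frac 12}_p\,\bu,
\]
and to bound the two summands separately, each giving the rate $p^{-(r+1/2)}$. The divergence summand is the easy one: by the commuting diagram of Theorem~\ref{thm_deRham} we have $\div\,\bw = \div\,\bu - \Pi^{-1/2}_{p-1}(\div\,\bu)$, i.e. it is exactly the error of the scalar $\tH^{-1/2}$-projection of $\div\,\bu$. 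Since $\bu \in \bH^r(\div,K)$ gives $\div\,\bu \in H^r(K)$, a $p$-version approximation estimate for the $\tH^{-1/2}$-projection (measuring an $H^r$ function in the $\tH^{-1/2}$-norm, hence a gain of $r+\tfrac12$ orders) yields $\|\div\,\bw\|_{\tH^{-1/2}(K)} \le C\,p^{-(r+1/2)}\,\|\div\,\bu\|_{H^r(K)}$.

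For the field summand I would use duality. Writing $\tilde\bH^{-1/2}(K)=(\bH^{1/2}(K))'$, we have $\|\bw\|_{\tilde\bH^{-1/2}(K)} = \sup_{\bphi}\,|\<\bw,\bphi\>_{0,K}|/\|\bphi\|_{\bH^{1/2}(K)}$, the supremum being over $\bphi\in\bH^{1/2}(K)$. For each test field I would invoke a stable regular (Helmholtz) decomposition on the convex reference element, $\bphi = \grad\,w + \bcurl\,q$ with $\|w\|_{H^{3/2}(K)} + \|q\|_{H^{3/2}(K)} \le C\,\|\bphi\|_{\bH^{1/2}(K)}$ (after normalising $w$ to have zero mean), and treat the two pieces via the defining orthogonalities of the interpolant. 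For the solenoidal piece, (\ref{E_3^p_2}) gives $\<\bw,\bcurl\,\phi\>_{0,K}=0$ for all $\phi\in\CP^{0}_p(K)$, so that $\<\bw,\bcurl\,q\>_{0,K}=\<\bw,\bcurl(q-\phi)\>_{0,K}\le \|\bw\|_{\bL^2(K)}\,\|\bcurl(q-\phi)\|_{\bL^2(K)}$; choosing $\phi$ as the best $H^1$-approximation of $q\in H^{3/2}(K)$ produces the factor $p^{-1/2}\|\bphi\|_{\bH^{1/2}(K)}$, which, combined with the $\bL^2$ error bound below, gives $p^{-(r+1/2)}$. For the irrotational piece I would integrate by parts, $\<\bw,\grad\,w\>_{0,K} = -\<\div\,\bw,w\>_{0,K}$, the normal-trace boundary term vanishing because $\bu$ and its interpolant share the same normal trace on $\partial K$ by the edge-matching construction, and then bound $|\<\div\,\bw,w\>_{0,K}| \le \|\div\,\bw\|_{\tH^{-1/2}(K)}\,\|w\|_{H^{1/2}(K)}$ using the already established divergence estimate together with $\|w\|_{H^{1/2}(K)}\le C\,\|\bphi\|_{\bH^{1/2}(K)}$.

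The remaining ingredient, used for the solenoidal piece, is the $\bL^2$ error estimate $\|\bw\|_{\bL^2(K)} \le C\,p^{-r}\,\|\bu\|_{\bH^r(\div,K)}$. This follows from the stability of Theorem~\ref{thm_stab} together with the polynomial-preservation property: writing $\bw = (\bu-\bv_p)-\Pi^{\div,-\frac 12}_p(\bu-\bv_p)$ for $\bv_p\in\bCP^{\rm RT}_p(K)$ and invoking the best simultaneous polynomial approximation of $\bu$ in $\bL^2(K)$ and of $\div\,\bu$, exactly as in the Demkowicz--Babu{\v s}ka analysis; the only novelty is to check that replacing the $L^2$-inner product by the $\tH^{-1/2}$-inner product in the interior problem (\ref{E_3^p_1}) does not degrade this estimate, which is where property (\ref{key_ip_property}) enters.

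I expect the main obstacle to be the field summand, and specifically the gain of the extra half power in $p$: it hinges on having a regular decomposition of the $\bH^{1/2}$-test field that is stable in $\bH^{1/2}(K)$ and compatible with the two orthogonalities (\ref{E_3^p_1})--(\ref{E_3^p_2}), and on carefully discarding the boundary term in the integration by parts. Since the interior projection is taken in $\tH^{-1/2}$ rather than in $L^2$, verifying the approximation properties of $\Pi^{-1/2}_{p-1}$ and the $\bL^2$ estimate in this modified construction is the technically delicate part of the argument.
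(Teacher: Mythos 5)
Your overall architecture matches the paper's: the divergence part of the graph norm is handled exactly as you describe (commuting diagram plus the $\tH^{-1/2}$-projection estimate of Lemma~\ref{lm_tH^{-1/2}_estimate}), and the field part is attacked by duality against a regular decomposition $\grad w+\bcurl q$, using the orthogonality (\ref{E_3^p_2}) and an $O(p^{-r})$ bound on $\|\bu-\bu^p\|_{0,K}$ to extract the extra half power. But there is a genuine gap at precisely the step you flag as ``carefully discarding the boundary term'': that boundary term does not vanish. The interpolant does \emph{not} reproduce the normal trace of $\bu$ on $\partial K$; by construction $(\bu^p\cdot\bn)|_{\ell}=\partial\psi_2^{\ell}/\partial\sigma$, where $\psi_2^{\ell}$ is only the $\tH^{1/2}(\ell)$-\emph{projection} of $\psi$ onto $\CP_p^0(\ell)$ (see (\ref{psi_2^l})), so $(\bu-\bu^p)\cdot\bn$ is a nonzero element of $H^{-1/2}(\partial K)$ whose edge averages vanish but which is otherwise uncontrolled pointwise. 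Consequently $\<\bw,\grad w\>_{0,K}=-\<\div\,\bw,w\>_{0,K}+\int_{\partial K}(\bw\cdot\bn)\,w\,d\sigma$, and the second term must be estimated, not discarded.

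Handling this mismatch is the main technical content of the paper's proof. It introduces an auxiliary field $\bu_0$ solving (\ref{estim_proof_1}) with $\bu_0\cdot\bn=\bu^p\cdot\bn$ on $\partial K$, applies the duality argument only to $\bu_0-\bu^p\in\bH_0(\div,K)$ (where the boundary term genuinely vanishes), and controls the correction $\|\bu-\bu_0\|_{\tilde\bH^{t+1/2}(\div,K)}$ by $\|(\bu-\bu^p)\cdot\bn\|_{H^{t}(\partial K)}$ via a lifting estimate (Lemma~4.8 of Buffa--Christiansen). The extra half power for this correction comes from Lemma~\ref{lm_aux1}, which shows that the edge residual is smaller by a factor $p^{-1/2}$ when measured in $\tH^{-1}(\ell)$ rather than in $H^{-1/2}(\partial K)$ --- a gain from measuring the $\tH^{1/2}(\ell)$-projection error in a weaker norm that your sketch never invokes and that cannot be bypassed. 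A secondary, repairable issue: your route to $\|\bw\|_{0,K}\le C\,p^{-r}\,\|\bu\|_{\bH^r(\div,K)}$ via stability plus best simultaneous polynomial approximation loses an $\eps$ (stability is only available from $\bH^{\tilde\eps}(K)\cap\tilde\bH^{-1/2}(\div,K)$, and approximating $\bu\in\bH^r(K)$ in $\bH^{\tilde\eps}(K)$ costs $p^{-(r-\tilde\eps)}$); the paper avoids this in Lemma~\ref{lm_aux_err} by first splitting $\bu=\bcurl\,\psi+\bv$ with the regularized Poincar\'e operators and exploiting the commutativity with $\Pi_p^1$ on the curl part, confining the $\eps$-loss to the smoother component $\bv$.
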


\begin{remark} \label{rem_1}
Using similar constructions it is possible to introduce
a stable $\bH(\div)$-conforming $p$-interpolation operator for even less regular
vector fields $\bu \in \bH^r(K) \cap \tilde\bH^{s}(\div,K)$,
$r > 0$, $-1 \le s < -\frac 12$.
However, our proof of the commuting diagram property carries over to
this operator if the $\tH^{s}(K)$-inner product
reduces to the $L^2$-inner product for a constant function (cf. property
{\rm (\ref{key_ip_property})}), which is an open problem.
Although we note that such an operator would be of purely theoretical interest.
\end{remark}

\begin{remark} \label{rem_2}
Theorem~{\rm \ref{thm_err_estimate}} states the interpolation error estimate
for sufficiently regular vector fields, for which one can also
apply the operator $\Pi^{\div}_p$.
For BIE of electromagnetics on open surfaces, the solution is less regular
and belongs to $\bH^r(\div,\G)$, where $r \in (-\frac 12, 0)$ and $\G$ is an open
surface (see {\rm \cite[Section 4.4]{CostabelD_00_SEF}} and
{\rm \cite[Appendix~A]{BespalovH_NpB}}).
To obtain the error estimate for the corresponding BEM in this case, one can apply the (global)
orthogonal projection $P_{p}$ with respect to the energy norm $\|\cdot\|_{\bX}$.
Then the estimate for $\|\bu - P_{p}\bu\|_{\bX}$ can be reduced
to the estimate obtained in Theorem~{\rm \ref{thm_err_estimate}}
in the same way as in {\rm \cite{BuffaC_03_EFI}} or {\rm \cite{BespalovH_hpA}}.
Note that using the projector $P_{p}$ locally does not guarantee the conformity
of approximations (i.e., the continuity of normal components across interelement boundaries).
Moreover, the projector $P_{p}$ does not satisfy the commuting diagram property
in {\rm (\ref{deRham})}, and, thus, it is not suitable for such purposes as the
convergence analysis and the proof of the discrete compactness.
\end{remark}

The rest of the paper is organised as follows.
Section~\ref{sec_prelim} gives necessary preliminaries:
we introduce the notation, recall definitions of functional
spaces of scalar functions and vector fields, and collect
auxiliary results. In particular, we give a more detailed description
of the interpolation operators $\Pi^{1}_p$, $\Pi^{\div}_p$ and summarise their properties
(see \S\ref{sec_inter_old}). In Section~\ref{sec_proofs} we prove the main
theorems formulated above. Finally, in the Appendix we introduce some equivalent norms
in the Sobolev spaces $H^r(K)$ and $\tH^r(K)$ ($r = \pm\frac 12$),
derive expressions for corresponding inner products, and establish the key property
(\ref{key_ip_property}) for the $\tH^{-1/2}$-inner product.

%%%%%%%%%%%%%%%%%%%%%%%%%%%%%%%%%%%%%%%%%%%%%%%%%%%%%%%%%%%%%%%%%%%%%%%%%%%%%%%%
\section{Preliminaries} \label{sec_prelim}
\setcounter{equation}{0}

%%%%%%%%%%%%%%%%%%%%%%%%%%%%%%%%%%%%%%%%%%%%%%%%%%%%%%%%%%%%%%%%%%%%%%%%%%%%%%%%
\subsection{Functional spaces and polynomial sets} \label{sec_spaces}

In what follows, $p \ge 0$ will always specify a polynomial degree and $C$ denotes
a generic positive constant which is independent
of $p$ and involved functions, unless stated otherwise.
Furthermore, throughout the paper, $K$ is either the equilateral reference triangle
$T = \{\bx = (x_1,x_2);\; x_2 > 0,\ x_2 < x_1 \sqrt 3,\ x_2 < (1-x_1)\sqrt 3\}$ or
the reference square $Q=(0,1)^2$.
A generic edge of $K$ will be denoted by $\ell$,
and $\bn$ denotes the outward normal unit vector to $\partial K$.

We will use the standard definitions for the Sobolev spaces $H^r(\Omega)$ ($r \ge 0$)
of scalar functions on $\Omega$, see, e.g., \cite{LionsMagenes}
(hereafter, $\Omega$ is either the unit interval $I = (0,1)$
or the reference element $K$). The norms in these spaces are denoted by
$\|\cdot\|_{H^r(\Omega)}$.
For $r \in (0,1)$ we will also need the Sobolev spaces $\tH^r(\Omega)$
which are defined by interpolation.
We use the real K-method of interpolation (see \cite{LionsMagenes}) to define
\[
   \tilde H^{r}(\Omega) = \Big(L^2(\Omega), H_0^t(\Omega)\Big)_{\frac rt,2}
   \quad (1/2 < t \le 1,\ 0<r<t).
\]
Here, $H_0^t(\Omega)$  ($0<t\le 1$) is the completion of $C_0^\infty(\Omega)$ in
$H^t(\Omega)$ and we identify $H_0^1(\Omega)$ with $\tilde H^1(\Omega)$.
Note that the Sobolev spaces $H^r(\Omega)$ also satisfy the interpolation property
\[
   H^r(\Omega) = \Big(L^2(\Omega), H^1(\Omega)\Big)_{r,2}\quad (0<r<1)
\]
with equivalent norms.

The $L^2$-inner product and the corresponding $L^2$-norm on $\Omega$ are
denoted by $\<\cdot,\cdot\>_{0,\Omega}$ and $\|\cdot\|_{0,\Omega}$, respectively.
For $r\in[-1,0)$ the Sobolev spaces and their norms are defined by duality
with $L^2(\Omega) = H^0(\Omega) = \tH^0(\Omega)$ as pivot space:
\[
   H^r(\Omega) = \big(\tilde H^{-r}(\Omega)\big)',\quad
   \tilde H^r(\Omega) = \big(H^{-r}(\Omega)\big)',
\]
\be \label{dual_norm}
    \|u\|_{H^{r}(\Omega)} = \sup_{0\not=v\in\tH^{-r}(\Omega)}
    {|\<u,v\>_{0,\Omega}| \over{\|v\|_{\tH^{-r}(\Omega)}}},
    \quad
    \|u\|_{\tH^{r}(\Omega)} = \sup_{0\not=v\in H^{-r}(\Omega)}
    {|\<u,v\>_{0,\Omega}| \over{\|v\|_{H^{-r}(\Omega)}}}.
\ee
Note that the Sobolev spaces $H^r$ and $\tH^r$
on any edge $\ell \subset \partial K$ are defined by using the
definitions of the corresponding spaces on the interval $I$.

In the Appendix we consider some other expressions for norms
in the Sobolev spaces $H^r(K)$ and $\tH^r(K)$ with $r = \pm\frac 12$.
We will prove their equivalence to the norms defined above, and
we will also derive expressions for corresponding inner products.

Throughout the paper, we use boldface symbols for vector fields.
The spaces (or sets) of vector fields are denoted in boldface as well
(e.g., $\bH^r(K) = (H^r(K))^2$), with their norms and inner
products being defined component-wise.
Similarly to the scalar case, the norm and inner product in $\bL^2(K)$
will be denoted by $\<\cdot,\cdot\>_{0,K}$ and $\|\cdot\|_{0,K}$, respectively,
which should not lead to any confusion.
The standard notation will be used for differential operators
$\grad = (\partial/\partial x_1,\,\partial/\partial x_2)$,
$\div = \grad\,\cdot$, $\curl = \grad\mbox{\small $\times$}$, and
for the Laplace operator $\Delta = \div\,\grad$.

Furthermore, we will use the following spaces
\[
  \bH^r(\div,K) :=
  \{\bu \in \bH^{r}(K);\;
  \div\,\bu \in H^{r}(K)\},\quad r \ge 0
\]
and
\[
  \tilde\bH^{r}(\div,K) :=
  \{\bu \in \tilde\bH^{r}(K);\;
  \div\,\bu \in \tH^{r}(K)\}, \quad r \in [-1,-\hbox{$\frac 12$}].
\]
These spaces are equipped with their graph norms
$\|\cdot\|_{\bH^r(\div,K)}$ and $\|\cdot\|_{\tilde\bH^{r}(\div,K)}$,
respectively.
For $r=0$ we drop the superscript in the above notation:
$\bH^0(\div,K) = \bH(\div,K)$.

Finally, we will need two sub-spaces incorporating homogeneous
boundary conditions for the trace of the normal component on $\partial K$.
By $\bH_0(\div,K)$ (resp., $\tilde\bH^{-1/2}_0(\div,K)$) we denote the
subspace of elements $\bu \in \bH(\div,K)$ (resp., $\bu \in \tilde\bH^{-1/2}(\div,K)$)
such that for all $v \in C^{\infty}(K)$ there holds
\be \label{X_property}
    \<\bu,\grad v\>_{0,K} + \<\div\,\bu,v\>_{0,K} = 0.
\ee
We note that if $\bu \in \tilde\bH^{-1/2}_0(\div,K)$,
then identity (\ref{X_property}) holds for any $v\in H^{3/2}(K)$ by density.
In particular, $\tilde\bH^{-1/2}_0(\div,K)$ is a closed subspace of
$\tilde\bH^{-1/2}(\div,K)$.

Let us now introduce the polynomial sets we need.
By $\CP_p(I)$ we denote the set of polynomials of degree $\le p$ on
the interval $I$, and $\CP^0_p(I)$ denotes the subset of $\CP_p(I)$
which consists of polynomials vanishing at the end points of $I$.
In particular, these two sets will be used for any edge $\ell \subset \partial K$.

Further, $\CP^1_p(T)$ denotes the set of polynomials on $T$ of total degree $\le p$, and
$\CP^2_{p_1,p_2}(Q)$ is the set of polynomials on $Q$ of degree $\le p_1$ in $x_1$
and of degree $\le p_2$ in $x_2$. For $p_1 = p_2 = p$ we denote
$\CP^2_{p}(Q) = \CP^2_{p,p}(Q)$, and we will use the unified notation
$\CP_{p}(K)$, which refers to $\CP^1_{p}(T)$ if $K=T$ and to $\CP^2_{p}(Q)$ if $K=Q$.
The corresponding set of polynomial (scalar) bubble functions on $K$ is denoted
by $\CP^0_{p}(K)$.

Let us denote by $\bCP^{\rm RT}_p(K)$ the RT-space of order $p\ge 1$ on
the reference element $K$ (see, e.g., \cite{BrezziF_91_MHF, RobertsT_91_MHM}), i.e.,
\[
  \bCP^{\rm RT}_p(K) =
  (\CP_{p-1}(K))^2 \oplus \bx \CP_{p-1}(K) =
  \cases{
         (\CP^1_{p-1}(T))^2 \oplus \bx \CP^1_{p-1}(T)
         & \hbox{if \ $K = T$},
         \cr
         \noalign{\vspace{5pt}}
         \CP^2_{p,p-1}(Q) \times \CP^2_{p-1,p}(Q)
         & \hbox{if \ $K = Q$}.
         \cr
        }
\]
The subset of $\bCP^{\rm RT}_p(K)$ which consists of vector-valued polynomials
with vanishing normal trace on the boundary $\partial K$
(vector bubble-functions) will be denoted by $\bCP^{\rm RT,0}_p(K)$.

%%%%%%%%%%%%%%%%%%%%%%%%%%%%%%%%%%%%%%%%%%%%%%%%%%%%%%%%%%%%%%%%%%%%%%%%%%%%%%%%
\subsection{Auxiliary lemmas} \label{sec_aux}

First, let us formulate the following result, which will be used frequently in
what follows.

\begin{lemma} \label{lm_trace}
The normal trace mapping $\bu \mapsto \bu \cdot \bn$ defines
a linear and continuous operator from $\bH^s(K) \cap \tilde\bH^{-1+s}(\div,K)$
to $H^{-1/2+s}(\partial K)$ for $s \in [0,\frac 12)$.
\end{lemma}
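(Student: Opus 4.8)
The plan is to \emph{define} the normal trace by duality through Green's formula and thereby reduce the statement to a single boundedness estimate. For smooth $\bu$ and $v$ one has $\<\bu\cdot\bn,v\>_{\partial K}=\<\bu,\grad v\>_{0,K}+\<\div\,\bu,v\>_{0,K}$, and the left-hand side depends on $v$ only through its trace. Accordingly, for $\bu\in\bH^s(K)\cap\tilde\bH^{-1+s}(\div,K)$ and $\mu\in\tH^{1/2-s}(\partial K)$ I would set
\[
  \<\bu\cdot\bn,\mu\>_{\partial K}:=\<\bu,\grad(E\mu)\>_{0,K}+\<\div\,\bu,E\mu\>_{0,K},
\]
where $E:\tH^{1/2-s}(\partial K)\to H^{1-s}(K)$ is a fixed bounded right inverse of the trace operator; such an $E$ exists on the Lipschitz polygon $K$ because $1-s\in(\frac12,1]$, and it satisfies $\|E\mu\|_{H^{1-s}(K)}\le C\,\|\mu\|_{\tH^{1/2-s}(\partial K)}$.

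First I would verify that the functional is well defined, i.e. independent of the chosen extension. If $w\in H^{1-s}(K)$ has vanishing trace then $w\in\tilde H^{1-s}(K)$ (the closure of $C_0^\infty(K)$, since $1-s>\frac12$), and for $w\in C_0^\infty(K)$ the definition of the distributional divergence gives $\<\div\,\bu,w\>_{0,K}=-\<\bu,\grad w\>_{0,K}$; both terms are continuous on $H^{1-s}(K)$, so by density $\<\bu,\grad w\>_{0,K}+\<\div\,\bu,w\>_{0,K}=0$ for every trace-free $w$. Here the pairings make sense because $\div\,\bu\in\tH^{-1+s}(K)=(H^{1-s}(K))'$, while for $s\in[0,\frac12)$ one has $\bH^s(K)=\tilde\bH^s(K)$, so $\bu$ may be paired with $\grad w\in\bH^{-s}(K)=(\tilde\bH^s(K))'$.

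Next I would prove the estimate. Splitting as above and using the two regularity hypotheses,
\[
  |\<\bu,\grad(E\mu)\>_{0,K}|\le\|\bu\|_{\bH^s(K)}\,\|\grad(E\mu)\|_{\tilde\bH^{-s}(K)},\qquad
  |\<\div\,\bu,E\mu\>_{0,K}|\le\|\div\,\bu\|_{\tH^{-1+s}(K)}\,\|E\mu\|_{H^{1-s}(K)}.
\]
For the first term I use the key fact that $\tilde\bH^{-s}(K)=\bH^{-s}(K)$ with equivalent norms for $s\in(0,\frac12)$ (below the threshold $\frac12$ the tilde and non-tilde spaces coincide; for $s=0$ one simply has $\tilde\bH^0=\bL^2$), together with the boundedness of $\grad:H^{1-s}(K)\to\bH^{-s}(K)$, to get $\|\grad(E\mu)\|_{\tilde\bH^{-s}(K)}\le C\,\|E\mu\|_{H^{1-s}(K)}$. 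Combining both bounds with the stability of $E$ yields
\[
  |\<\bu\cdot\bn,\mu\>_{\partial K}|\le C\,\big(\|\bu\|_{\bH^s(K)}+\|\div\,\bu\|_{\tH^{-1+s}(K)}\big)\,\|\mu\|_{\tH^{1/2-s}(\partial K)};
\]
taking the supremum over $\mu$ and recalling $H^{-1/2+s}(\partial K)=(\tH^{1/2-s}(\partial K))'$ gives the asserted continuity, linearity being clear and consistency with the classical normal trace following by choosing smooth extensions for smooth $\bu$.

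The main obstacle is the careful bookkeeping with tilde and non-tilde Sobolev spaces: the argument rests on the identifications $\tilde H^\sigma(K)=H^\sigma(K)$ for $|\sigma|<\frac12$ (used both to pair $\bu$ with $\grad w$ and to identify $\tilde\bH^{-s}$ with $\bH^{-s}$) and on having one extension $E\mu\in H^{1-s}(K)$ that controls both terms of Green's formula at once. The endpoint $s=0$ needs slight extra care, since there $\tH^{1/2}(\partial K)\ne H^{1/2}(\partial K)$; however $\tH^{1/2}(\partial K)$ embeds continuously into the trace space $H^{1/2}(\partial K)$ of $H^1(K)$, so the extension $E$ still exists with the required bound. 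One should also note that the edge-wise and global definitions of $H^r(\partial K)$ coincide in the relevant range $|r|<\frac12$, no corner compatibility conditions being active there.
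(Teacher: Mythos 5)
Your proposal is correct and follows essentially the same route as the paper: define/evaluate the normal trace via Green's formula with a bounded right inverse of the scalar trace operator $H^{1-s}(K)\to H^{1/2-s}(\partial K)$, bound the two resulting terms by $\|\div\,\bu\|_{\tH^{-1+s}(K)}\,\|E\mu\|_{H^{1-s}(K)}$ and $\|\bu\|_{\bH^s(K)}\,\|\grad(E\mu)\|_{\bH^{-s}(K)}$, and conclude by duality. The extra care you take with well-definedness and with the identification $\tH^{-s}(K)=H^{-s}(K)$ for $s\in[0,\frac12)$ is exactly the bookkeeping the paper leaves implicit, so there is no substantive difference.
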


\begin{proof}
Let us denote by $\g_{\rm tr}$ the standard (scalar) trace operator with
$\g_{\rm tr}: H^{1-s}(K) \rightarrow H^{1/2-s}(\partial K)$ for $s \in [0,\frac 12)$, and
let $\g_{\rm tr}^{-1}: H^{1/2-s}(\partial K) \rightarrow H^{1-s}(K)$ be
a right inverse of $\g_{\rm tr}$. Let $\bu \in \bH^s(K) \cap \tilde\bH^{-1+s}(\div,K)$.
Taking an arbitrary $v \in H^{1/2-s}(\partial K)$ we integrate by parts to obtain
\beas
     \int\limits_{\partial K} (\bu \cdot \bn)\,v\,d\sigma
     & = &
     \int\limits_{K} (\div\,\bu)\,\g_{\rm tr}^{-1} v\,d\bx +
     \int\limits_{K} \bu \cdot \grad(\g_{\rm tr}^{-1} v)\,d\bx
     \\[3pt]
     & \le &
     \|\div\,\bu\|_{\tH^{-1+s}(K)}\,\|\g_{\rm tr}^{-1} v\|_{H^{1-s}(K)} +
     \|\bu\|_{\bH^s(K)}\,\|\grad(\g_{\rm tr}^{-1} v)\|_{\bH^{-s}(K)}
     \\[3pt]
     & \le &
     C \left(
             \|\bu\|_{\bH^s(K)} + \|\div\,\bu\|_{\tH^{-1+s}(K)}
      \right)
     \|v\|_{H^{1/2-s}(\partial K)}.
\eeas
Hence, $\bu \cdot \bn \in H^{-1/2+s}(\partial K)$ and we prove the continuity
of the normal trace mapping:
\beas
     \|\bu \cdot \bn\|_{H^{-1/2+s}(\partial K)}
     & = &
     \sup_{0\not=v\in H^{1/2-s}(\partial K)}
     {|\int_{\partial K} (\bu \cdot \bn)\, v\, d\sigma|
     \over{\|v\|_{H^{1/2-s}(\partial K)}}}
     \\[3pt]
     & \le &
     C \left(
             \|\bu\|_{\bH^s(K)} + \|\div\,\bu\|_{\tH^{-1+s}(K)}
      \right).
\eeas
\end{proof}

We will also need the following $p$-approximation result
in 2D (see \cite[Lemma~4.1]{BabuskaS_87_hpF}).

\begin{lemma} \label{lm_2D_p-approx}
Let $K$ be the reference triangle or square.
Then there exists a family of operators
$\{\pi_p\},\ p=1,2,\ldots,\ \pi_p:\, H^r(K)\rightarrow \CP_p(K)$
such that for any $f \in H^r(K)$, $r \ge 0$ there holds
\[
  \|f - \pi_p f\|_{H^t(K)} \le
  C p^{-(r-t)} \|f\|_{H^r(K)},\qquad  0\le t\le r.
\]
Moreover, $\pi_p$ preserves polynomials of degree $p$, i.e.,
$\pi_p f = f$ if $f \in \CP_p(K)$.
\end{lemma}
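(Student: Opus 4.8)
The plan is to reduce the two-dimensional claim to one-dimensional polynomial approximation on the interval $I=(0,1)$, build the square operator by tensorisation, transfer to the triangle through an orthogonal-polynomial (equivalently, extension) argument, and fill in the non-integer indices by interpolation. First I would construct a family of one-dimensional projectors $\pi_p^I:\,H^r(I)\to\CP_p(I)$ that reproduce $\CP_p(I)$ and satisfy
\[
  \|f-\pi_p^I f\|_{H^t(I)}\le C\,p^{-(r-t)}\,\|f\|_{H^r(I)},\qquad 0\le t\le r,
\]
uniformly in $p$. The natural choice is the truncated Legendre (equivalently $L^2(I)$-orthogonal) projection: for integer $r$ the estimate is the classical consequence of the decay of Legendre coefficients, the $k$-th coefficient of $f\in H^r(I)$ being controlled by $k^{-r}$ times a seminorm, so that truncation at degree $p$ loses exactly the factor $p^{-(r-t)}$ in every intermediate norm $H^t$. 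For non-integer $r$ or $t$ I would obtain the bound by the K-method already used in Section~\ref{sec_spaces}, which is legitimate once $\pi_p^I$ is known to be uniformly bounded between the integer endpoint spaces.

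For the square $Q=(0,1)^2$ I would set $\pi_p:=\pi_p^I\otimes\pi_p^I$, which is again a projection onto $\CP^2_p(Q)$, so the polynomial-preservation statement is immediate. The approximation rate follows from the telescoping identity
\[
  I-\pi_p^I\otimes\pi_p^I=(I-\pi_p^I)\otimes I+\pi_p^I\otimes(I-\pi_p^I),
\]
estimating each summand by the one-dimensional bound in one variable and the uniform boundedness of $\pi_p^I$ in the other. This yields the two-index estimate first for integer pairs $(r,t)$ and then, by interpolating in the pair, for the full range $0\le t\le r$.

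For the triangle $T$ I would work intrinsically with the Koornwinder--Dubiner basis, the $L^2(T)$-orthogonal system whose span up to total degree $p$ is $\CP^1_p(T)$; taking $\pi_p$ to be the corresponding $L^2(T)$-orthogonal projection makes the polynomial-preservation property automatic, and the error $f-\pi_p f$ is the tail of the orthogonal expansion. An alternative route is to use a bounded (Stein-type) extension $E:\,H^r(T)\to H^r(Q')$ onto a square $Q'\supset T$, apply the square operator of coordinate degree $\lfloor p/2\rfloor$ (so that restriction to $T$ lands in $\CP^1_p(T)$, the halving affecting only the constant), and restrict; this delivers the rate cleanly but not the projection property, which would then have to be restored by a correction.

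The hard part will be the triangle estimate. On $T$ the $H^t$-norms of the basis polynomials are no longer simple powers of the index, and the coefficient decay must be extracted through the collapsed-coordinate (Duffy) transform, which carries $T$ onto $Q$ at the price of a singular Jacobian; controlling the tail of the expansion in $H^t$ then requires the machinery of Jacobi-weighted Sobolev norms rather than the plain Legendre bookkeeping available on the square. The second delicate point, present on both elements, is the double interpolation in the source and target indices $(r,t)$: one must check that the operators are uniformly bounded at all the relevant integer endpoints simultaneously in both arguments, so that the K-method returns the full scale $0\le t\le r$ with $r$ possibly non-integer. The algebraic rate itself is dictated throughout by the one-dimensional coefficient decay; the real work lies in these two bookkeeping issues and in the singular-weight analysis on the triangle.
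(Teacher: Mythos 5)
The paper offers no proof of this lemma: it is imported verbatim from Babu{\v s}ka and Suri \cite[Lemma~4.1]{BabuskaS_87_hpF}, so your sketch has to be judged against the literature rather than against anything in the text.

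There is a genuine gap at the very first step, and it propagates through everything built on top of it. The truncated Legendre ($L^2(I)$-orthogonal) projection does \emph{not} satisfy $\|f-\pi_p^I f\|_{H^t(I)}\le C p^{-(r-t)}\|f\|_{H^r(I)}$ for $t>0$: the classical estimates of Canuto and Quarteroni show that the $L^2$-projection onto $\CP_p(I)$ is suboptimal in higher norms, losing (at least) a factor $p^{1/2}$ in $H^1(I)$, i.e.\ one only gets $C\,p^{3/2-r}\|f\|_{H^r(I)}$ there, and this loss is sharp. Your heuristic that the $k$-th Legendre coefficient of an $H^r$ function decays like $k^{-r}$ so that ``truncation loses exactly $p^{-(r-t)}$ in every intermediate norm'' fails because the $H^t(I)$-norms of the Legendre polynomials grow faster than $k^{t}$ (e.g.\ $\|L_k'\|_{0,I}\sim k^{3/2}$), so the tail in $H^t$ is worse than the naive bookkeeping suggests. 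Consequently the tensorised operator on $Q$ and the Koornwinder--Dubiner projection on $T$ inherit the same suboptimality for $t>0$, and the lemma as stated is not recovered. The standard repair --- and essentially what Babu{\v s}ka--Suri do --- is to use a one-dimensional operator that is \emph{simultaneously} optimal in $L^2$ and $H^1$ (for instance the antiderivative of the truncated expansion of $f'$, with an endpoint correction), extend to higher integer $r$ inductively, tensorise, and pass to the triangle by a bounded extension to a containing square; the polynomial-reproduction property, which the extension route destroys, must then be restored by adding back a controlled correction, and this step is not automatic since projections onto $\CP_p$ are not uniformly $H^t$-bounded. Your closing paragraph correctly identifies the triangle and the double interpolation in $(r,t)$ as delicate, but the decisive obstruction is already present in one dimension.
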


We use this result, in particular, to prove the next lemma,
which provides an optimal error estimate for the $\tH^{-1/2}$-projector
$\Pi^{-1/2}_p:\, \tH^{-1/2}(K) \rightarrow \CP_p(K)$.

\begin{lemma} \label{lm_tH^{-1/2}_estimate}
Let $\phi \in H^r(K)$, $r > -\frac 12$.
Then for any $p \ge 0$ there holds
\be \label{tH^{-1/2}_estimate}
    \|\phi - \Pi^{-1/2}_p\,\phi\|_{\tH^{-1/2}(K)} \le
    C (p+1)^{-(1/2+r)} \|\phi\|_{H^r(K)}.
\ee
\end{lemma}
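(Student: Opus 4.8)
The plan is to exploit that $\Pi^{-1/2}_p$ is, by definition, the orthogonal projector onto $\CP_p(K)$ with respect to the $\tH^{-1/2}(K)$-inner product, so that it realises the best approximation in that norm,
\[
  \|\phi - \Pi^{-1/2}_p\,\phi\|_{\tH^{-1/2}(K)}
  = \min_{\psi \in \CP_p(K)} \|\phi - \psi\|_{\tH^{-1/2}(K)}.
\]
I would then split the argument according to the sign of $r$, handling $r \ge 0$ by duality and $-\frac 12 < r < 0$ by operator interpolation. Throughout I would use the standard identification $\tH^\rho(K) = H^\rho(K)$ (equivalent norms) for $|\rho| < \frac 12$, which in particular guarantees that $\phi \in \tH^{-1/2}(K)$ in both regimes so that the projector is applicable.

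For $r \ge 0$ I would bound the best-approximation error by the $L^2$-projection error, $\|\phi - \Pi^{-1/2}_p\phi\|_{\tH^{-1/2}(K)} \le \|\phi - \Pi^0_p\phi\|_{\tH^{-1/2}(K)}$, and invoke the dual characterisation (\ref{dual_norm}),
\[
  \|\phi - \Pi^0_p\phi\|_{\tH^{-1/2}(K)}
  = \sup_{0 \ne v \in H^{1/2}(K)}
    \frac{|\<\phi - \Pi^0_p\phi, v\>_{0,K}|}{\|v\|_{H^{1/2}(K)}}.
\]
Since $\Pi^0_p$ is $L^2$-orthogonal onto $\CP_p(K)$, the numerator is unchanged when $v$ is replaced by $v - \pi_p v$, with $\pi_p$ from Lemma~\ref{lm_2D_p-approx}, so Cauchy--Schwarz gives $|\<\phi - \Pi^0_p\phi, v\>_{0,K}| \le \|\phi - \Pi^0_p\phi\|_{0,K}\,\|v - \pi_p v\|_{0,K}$. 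Applying Lemma~\ref{lm_2D_p-approx} to $\phi$ (with $t = 0$) and to $v$ (with $r = \frac 12$, $t = 0$) produces the factors $C p^{-r}\|\phi\|_{H^r(K)}$ and $C p^{-1/2}\|v\|_{H^{1/2}(K)}$, whence the bound $C p^{-(r+1/2)}\|\phi\|_{H^r(K)}$ for $p \ge 1$; the case $p = 0$ follows from the trivial stability estimate $\|\phi - \Pi^{-1/2}_0\phi\|_{\tH^{-1/2}(K)} \le \|\phi\|_{\tH^{-1/2}(K)} \le C\|\phi\|_{H^r(K)}$.

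For $-\frac 12 < r < 0$ I would interpolate the operator $I - \Pi^{-1/2}_p$ between two endpoint bounds: as an orthogonal projection it satisfies $\|(I - \Pi^{-1/2}_p)\phi\|_{\tH^{-1/2}(K)} \le \|\phi\|_{\tH^{-1/2}(K)}$, while the case just proved at $r=0$ gives $\|(I - \Pi^{-1/2}_p)\phi\|_{\tH^{-1/2}(K)} \le C(p+1)^{-1/2}\|\phi\|_{L^2(K)}$. Interpolating with fixed target $\tH^{-1/2}(K)$ and domains $\tH^{-1/2}(K)$ and $\tH^0(K)=L^2(K)$, and choosing $\theta = 2r+1 \in (0,1)$ so that $(\tH^{-1/2}(K), L^2(K))_{\theta,2} = \tH^{r}(K) = H^r(K)$, yields
\[
  \|(I - \Pi^{-1/2}_p)\phi\|_{\tH^{-1/2}(K)}
  \le C\,(p+1)^{-\theta/2}\,\|\phi\|_{H^r(K)}
  = C\,(p+1)^{-(r+1/2)}\,\|\phi\|_{H^r(K)},
\]
which is the asserted estimate.

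The main obstacle I expect is extracting the extra half power $p^{-1/2}$ beyond the plain $L^2$-rate $p^{-r}$: naively bounding $\|\phi - \Pi^{-1/2}_p\phi\|_{\tH^{-1/2}(K)} \le C\|\phi - \pi_p\phi\|_{0,K}$ loses it. The gain comes precisely from the duality argument together with the $L^2$-orthogonality of $\Pi^0_p$, which lets the approximation estimate also be spent on the dual test function $v$; this is why one must work with $\Pi^0_p$ rather than $\pi_p$ directly. A secondary point requiring care is the range $r<0$, where $\phi \notin L^2(K)$ and the duality manipulation is unavailable, forcing the interpolation detour and the identification $\tH^\rho = H^\rho$ for $|\rho|<\frac 12$.
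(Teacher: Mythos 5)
Your proposal is correct and follows essentially the same route as the paper: the $r\ge 0$ case via the minimization property of the $\tH^{-1/2}$-projector, the duality characterisation of the $\tH^{-1/2}$-norm, and the $L^2$-orthogonality of $\Pi^0_p$ spent on both $\phi$ and the dual test function; the negative range via operator interpolation against the trivial stability bound. The only cosmetic difference is that the paper interpolates from an endpoint $s\in(0,\tfrac12)$ followed by a density argument, whereas you use the $r=0$ endpoint directly, which is equivalent.
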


\begin{proof}
If $p = 0$ then (\ref{tH^{-1/2}_estimate}) is trivial. Let $p \ge 1$.
First, we assume that $r > 0$.
Using the standard duality argument and the $p$-approximation result of
Lemma~\ref{lm_2D_p-approx}, we estimate the error of the $L^2$-projection
$\Pi^0_p:\, L^2(K) \rightarrow \CP_p(K)$ in the $\tH^{-1/2}$-norm:
\beas
     \|\phi - \Pi_p^0\phi\|_{\tH^{-1/2}(K)}
     & \le &
     \|\phi - \Pi_p^0\phi\|_{0,K}
     \sup_{\varphi \in H^{1/2}(K)\setminus\{0\}}\;
     \inf_{\varphi_p \in \CP_p(K)}
     \frac {\|\varphi - \varphi_p\|_{0,K}}{\|\varphi\|_{H^{1/2}(K)}}
     \\
     & \le &
     \|\phi - \Pi_p^0\phi\|_{0,K}
     \sup_{\varphi \in H^{1/2}(K)\setminus\{0\}}
     \frac {\|\varphi - \Pi_p^0\varphi\|_{0,K}}{\|\varphi\|_{H^{1/2}(K)}}
     \\
     & \le &
     C\,(p+1)^{-(1/2+r)}\,\|\phi\|_{H^r(K)}.
\eeas
This estimate yields (\ref{tH^{-1/2}_estimate}) due to the minimization property
of the $\tH^{-1/2}$-projection.

Now, let $r \in (-\frac 12,0]$. Assuming that $\phi \in H^s(K) = \tH^s(K)$ with some
$s \in (0,\frac 12)$ and using the first part of the proof, we have
\[
  \|\phi - \Pi_p^{-1/2}\phi\|_{\tH^{-1/2}(K)} \le
  C\,(p+1)^{-(1/2+s)}\,\|\phi\|_{H^s(K)}.
\]
On the other hand, it is trivial that
\[
  \|\phi - \Pi_p^{-1/2}\phi\|_{\tH^{-1/2}(K)} \le
  \|\phi\|_{\tH^{-1/2}(K)}.
\]
Therefore, we prove by interpolation that
\beas
     \|\phi - \Pi_p^{-1/2}\phi\|_{\tH^{-1/2}(K)}
     & \le &
     C\,(p+1)^{-(1/2+r)}\,\|\phi\|_{\tH^r(K)}
     \cr\cr
     & \le &
     C(r)\,(p+1)^{-(1/2+r)}\,\|\phi\|_{H^r(K)}\qquad
     \forall\,\phi \in H^s(K).
\eeas
Hence, by density of regular functions in $H^r(K)$, we obtain (\ref{tH^{-1/2}_estimate}),
and the proof is finished.
\end{proof}

The following lemma states the inverse inequality for polynomials on
the reference element~$K$.

\begin{lemma} \label{lm_inverse}
Let $v_p \in \CP_p(K)$. Then for any $s,\,r \in [-1,1]$ with
$s \le r$ there holds
\[
  \|v\|_{H^r(K)} \le C\, p^{2(r-s)}\, \|v\|_{H^s(K)},
\]
where $C$ is a positive constant independent of $p$.
\end{lemma}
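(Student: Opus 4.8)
The plan is to prove the inequality first for nonnegative indices and then to obtain the negative and the mixed ranges by duality, gluing the pieces together through the intermediate value $0$. The single genuinely analytic input is the classical polynomial (Markov-type) inverse inequality $\|v\|_{H^1(K)}\le C\,p^2\,\|v\|_{0,K}$ on the reference triangle and square; on $Q$ this follows by tensorisation from the one–dimensional estimate, and on $T$ from the corresponding known spectral bounds. This is exactly the case $(s,r)=(0,1)$, with the correct exponent $p^{2(r-s)}=p^2$.

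To cover the whole range $0\le s\le r\le 1$ I would use the interpolation identity $H^\theta(K)=(L^2(K),H^1(K))_{\theta,2}$ recalled in Section~\ref{sec_spaces} together with interpolation of the identity map on $\CP_p(K)$. Viewing $\mathrm{Id}:\CP_p(K)\to(\CP_p(K),\|\cdot\|_{H^1(K)})$ as bounded by $C\,p^2$ when the source carries the $L^2$-norm and by $1$ when it carries the $H^1$-norm, interpolation of the source yields $\|v\|_{H^1(K)}\le C\,p^{2(1-s)}\,\|v\|_{H^s(K)}$ for $0\le s\le 1$. A second interpolation, now of the target between $H^s(K)$ and $H^1(K)$ and using the reiteration theorem, upgrades this to $\|v\|_{H^r(K)}\le C\,p^{2(r-s)}\,\|v\|_{H^s(K)}$ for all $0\le s\le r\le 1$.

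For $-1\le s\le r\le 0$ I would argue by duality. Writing $\|v\|_{H^r(K)}$ through the dual characterisation (\ref{dual_norm}) and using that $v\in\CP_p(K)$, I replace each dual test function $w$ by its $L^2$-projection $\Pi^0_p w$ (the orthogonal complement pairs to zero against $v$), so that the pairing is reduced to a polynomial; applying the positive-range estimate of the previous step to $\Pi^0_p w$ and the $p$-uniform stability of $\Pi^0_p$ in the relevant positive-order norm then produces the factor $p^{2(r-s)}$. Finally, the mixed case $s<0<r$ follows by chaining the two ranges at the intermediate index $0$: multiplying $\|v\|_{H^r(K)}\le C\,p^{2r}\,\|v\|_{0,K}$ with $\|v\|_{0,K}\le C\,p^{-2s}\,\|v\|_{H^s(K)}$ gives the required exponent, the junction being unambiguous since $L^2(K)=H^0(K)=\tH^0(K)$.

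The main obstacle lies in the negative range. First, the duality pairing inevitably mixes the scales $H^\sigma$ and $\tH^\sigma$, which coincide (with equivalent norms) only for $\sigma<\frac12$; care is needed so that, after projection, the polynomial test function is measured in a genuine (non-tilde) Sobolev norm, where the inverse inequality of the positive range is available, rather than in a tilde-norm that would be infinite on polynomials with nonvanishing normal trace once the threshold $\sigma=\frac12$ is crossed. Second, the argument rests on the $p$-independent stability of the $L^2$-projection $\Pi^0_p$ in $H^\sigma(K)$ for $0\le\sigma\le1$, which I would deduce by interpolating the trivial $L^2$-stability with $H^1$-stability; the latter is standard on $Q$ by tensorisation but is the delicate point on the triangle $T$.
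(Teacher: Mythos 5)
Your outline matches the route the paper itself gestures at (it cites Dorr for the case $s=0$, $r\ge 0$ via Schmidt's inequality, and Heuer's 2001 paper for the extension ``by interpolation arguments and induction''), but the negative range as you describe it does not go through, and that is precisely the hard part of the lemma. In the duality step you bound $\<v,w\>_{0,K}=\<v,\Pi^0_p w\>_{0,K}\le \|v\|_{H^s(K)}\,\|\Pi^0_p w\|_{\tH^{-s}(K)}$, which requires $\Pi^0_p w\in\tH^{-s}(K)$. For $s<-\frac12$ the projected test function is a polynomial that in general does not vanish on $\partial K$ and hence does not belong to $\tH^{-s}(K)$ at all; even for $s$ approaching $-\frac12$ the equivalence constant between $\|\cdot\|_{\tH^{-s}}$ and $\|\cdot\|_{H^{-s}}$ blows up. You correctly name this obstacle, but saying ``care is needed'' does not remove it: there is no choice of norm on the polynomial $\Pi^0_p w$ that both pairs against $\|v\|_{H^s(K)}$ and is controlled by $\|w\|_{\tH^{-r}(K)}$ once $-s\ge\frac12$, so the whole subrange $-1\le s\le-\frac12$ (in particular the endpoint $s=-1$, which the lemma includes) is not covered. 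A working endpoint argument is of a different nature: for $(s,r)=(-1,0)$ one writes $\|v\|_{0,K}^2=\<v,v\>_{0,K}=\<\grad\phi,\grad v\>_{0,K}$ with $\phi\in H^1_0(K)$ solving $-\Delta\phi=v$, so that $\|v\|_{H^{-1}(K)}=|\phi|_{H^1(K)}$ exactly, and then Cauchy--Schwarz plus the $H^1$--$L^2$ inverse estimate give $\|v\|_{0,K}\le C p^{2}\|v\|_{H^{-1}(K)}$; the intermediate negative indices then require a further interpolation/induction step, which is what the cited reference supplies.

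Two smaller points. First, your ``interpolation of the source'' in the positive range ($\|v\|_{H^1(K)}\le Cp^{2(1-s)}\|v\|_{H^s(K)}$) tacitly uses that $\CP_p(K)$ equipped with the $L^2$- and $H^1$-norms forms an interpolation couple whose intermediate spaces carry the $H^s$-norm \emph{uniformly in $p$}; this is true but needs justification, e.g.\ via the uniformly $H^r$-bounded projections $\pi_p$ of Lemma~\ref{lm_2D_p-approx} (which are bounded on $H^r(K)$ for every $r\ge0$ with constants independent of $p$), and the same uniform-interpolation issue reappears, in dual form, when you try to fill in the intermediate negative indices. Second, the $H^1(K)$-stability of $\Pi^0_p$ on the triangle, which your negative-range step also relies on, is another unproved ingredient; with the $(-\Delta)^{-1}$ argument above it is not needed. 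The mixed case $s<0<r$ by chaining through $0$ is fine once both ranges are established.
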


For $r \ge 0$, $s = 0$ the proof is based on Schmidt's inequality and
given in \cite{Dorr_84_ATp} for both types of reference elements
(see Lemma~5.1 and its proof therein). By using interpolation arguments
and induction, this result has been
extended in \cite{Heuer_01_ApS} to the full range of parameters $s,\,r \in [-1,1]$.

%%%%%%%%%%%%%%%%%%%%%%%%%%%%%%%%%%%%%%%%%%%%%%%%%%%%%%%%%%%%%%%%%%%%%%%%%%%%%%%%
\subsection{The regularized Poincar{\' e} integral operators} \label{sec_Poincare}

In \cite{CostabelM_BRP}, Costabel and McIntosh studied a regularized version of the
Poincar{\' e}-type integral operator acting on differential forms in ${\field{R}}^n$.
They proved, in particular, that this operator is bounded on a wide range of
functional spaces including the whole scale of Sobolev spaces $H^r(\Omega)$
($r \in {\field{R}}$) on a bounded Lipschitz domain $\Omega$ which is starlike
with respect to an open ball. Moreover, the essential polynomial preserving
property of the classical Poincar{\' e} map is retained by its regularized version.
Thus, the results of \cite{CostabelM_BRP} have immediate applications to the analysis
of high-order elements (see, e.g., \cite{Hiptmair_DCp,BespalovH_OEE,BespalovHH_Chp}).

Let us formulate some results of \cite{CostabelM_BRP} in two particular cases.
Namely, we will define two Poincar{\' e}-type integral operators: one operator
acts on scalar functions, and the other one acts on divergence-free vector fields.
In both cases the functions and vector fields are defined on the reference
element $K$. Denoting by $B$ an open ball in $K$,
let us consider a smoothing function
\[
  \theta \in C^{\infty}({\field{R}}^2),\quad
  \supp\,\theta \subset B,\quad
  \int\limits_{B} \theta(\bolda)\,d\bolda = 1,\quad
  \bolda = (a_1,a_2).
\]
Then the first regularized Poincar{\' e}-type integral operator
$R:\,C^{\infty}(\bar K) \rightarrow (C^{\infty}(\bar K))^2$
(i.e., the operator acting on scalar functions) is defined as
$R\psi = (R_1,R_2)$, where
\[
  R_i(\bx) := \int\limits_{B} \theta(\bolda)\,(x_i - a_i)
              \int\limits_0^1 t \psi(\bolda + t(\bx - \bolda))\,dt\,d\bolda,\quad
  i = 1,2.
\]
The second operator acting on vector fields is defined as follows:
\[
  \ba{l}
  A:\,(C^{\infty}(\bar K))^2 \rightarrow C^{\infty}(\bar K),
  \\
  \displaystyle{
  A\bu(\bx) :=
  \int\limits_{B} \theta(\bolda)
  \bigg(
        (x_2 - a_2) \int\limits_0^1 u_1(\bolda + t(\bx - \bolda))\,dt -
        (x_1 - a_1) \int\limits_0^1 u_2(\bolda + t(\bx - \bolda))\,dt
  \bigg)
  d\bolda,
  }
  \ea
\]
where $\bu = (u_1,u_2)$.

The following properties of the operators $R$ and $A$ are easy to check directly
(see also \cite[Proposition~4.2]{CostabelM_BRP}):

\begin{itemize}

\item[(R1)]
$R$ is a right inverse of the $\div$ operator, i.e.,
\[
  \div(R\psi) = \psi\qquad \forall\,\psi \in H^r(K),\quad r \ge 0;
\]

\item[(A1)]
if $\bu$ is divergence-free, then $A$ is a right inverse of the vector curl, i.e.,
\[
  \bcurl(A\bu) = \bu\qquad
  \forall\,\bu \in \bH^r(\div 0, K) =
  \{\bu \in \bH^r(K);\; \div\,\bu = 0\ \hbox{in $K$}\},\quad
  r \ge 0.
\]

\end{itemize}

The operators $R$ and $A$ satisfy the following continuity properties
(see \cite[Corollary~3.4]{CostabelM_BRP}):

\begin{itemize}

\item[(R2)]
the mapping $R$ defines a bounded operator $H^{r-1}(K) \rightarrow \bH^{r}(K)$
for any $r \ge 0$;

\item[(A2)]
the mapping $A$ defines a bounded operator $\bH^r(K) \rightarrow H^{r+1}(K)$
for any $r \ge 0$.

\end{itemize}

Furthermore, the operators $R$ and $A$ preserve polynomials:

\begin{itemize}

\item[(R3)]
$R$ maps $\CP_p(K)$ into $\bCP^{\rm RT}_{p+1}(K)$;

\item[(A3)]
$A$ maps $\bCP^{\rm RT}_{p}(K)$ into $\CP_{p}(K)$.

\end{itemize}

We will use the operators $R$ and $A$ to prove the following auxiliary lemma.

\begin{lemma} \label{lm_Poincare}
Let $r > 0$ and $s \ge r - 1$. If $\bu \in \bH^r(K)$ and $\div\,\bu \in H^s(K)$,
then there exist a function $\psi \in H^{r+1}(K)$
and a vector field $\bv \in \bH^{s+1}(K)$ such that
\be \label{P_1}
    \bu = \bcurl\,\psi + \bv.
\ee
Moreover,
\be \label{P_2}
    \|\bv\|_{\bH^{s+1}(K)} \le C\,\|\div\,\bu\|_{H^s(K)}\qquad
    \hbox{and}\qquad
    \|\psi\|_{H^{r+1}(K)} \le C\,\|\bu\|_{\bH^r(K)}.
\ee
\end{lemma}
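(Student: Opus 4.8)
The plan is to build the decomposition directly from the regularized Poincar{\'e} operators $R$ and $A$: use $R$ to absorb the divergence of $\bu$ into a field $\bv$ whose regularity is governed by $\div\,\bu$, and then apply $A$ to the divergence-free remainder to produce the potential $\psi$.

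First I would set $\bv := R(\div\,\bu)$. Since $\div\,\bu \in H^s(K)$ and $s \ge r-1 > -1$, property (R2) applied with index $s+1 \ge 0$ shows that $\bv \in \bH^{s+1}(K)$ together with $\|\bv\|_{\bH^{s+1}(K)} \le C\,\|\div\,\bu\|_{H^s(K)}$, which is the first estimate in (\ref{P_2}). By (R1) we have $\div\,\bv = \div(R(\div\,\bu)) = \div\,\bu$, so the remainder $\bw := \bu - \bv$ is divergence-free. The regularity of $\bw$ is the crucial bookkeeping step: here the hypothesis $s \ge r-1$ enters, since it gives $s+1 \ge r$ and hence the embedding $\bH^{s+1}(K) \hookrightarrow \bH^r(K)$; combined with $\bu \in \bH^r(K)$ this yields $\bw \in \bH^r(K)$.

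Next I would define $\psi := A\,\bw$. Because $\bw$ is divergence-free, property (A1) gives $\bcurl\,\psi = \bcurl(A\,\bw) = \bw$, whence $\bu = \bw + \bv = \bcurl\,\psi + \bv$, which is (\ref{P_1}). Property (A2) then gives $\psi \in H^{r+1}(K)$ with $\|\psi\|_{H^{r+1}(K)} \le C\,\|\bw\|_{\bH^r(K)}$.

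The point that needs care is the claimed bound $\|\psi\|_{H^{r+1}(K)} \le C\,\|\bu\|_{\bH^r(K)}$, in which the divergence does not appear. The naive route bounds $\|\bw\|_{\bH^r(K)}$ by $\|\bu\|_{\bH^r(K)} + \|\bv\|_{\bH^r(K)}$ and then invokes the strong estimate on $\bv$, which would reintroduce $\|\div\,\bu\|_{H^s(K)}$. Instead I would estimate $\bv = R(\div\,\bu)$ at the \emph{lower} regularity level: applying (R2) with index $r$ gives $\|\bv\|_{\bH^r(K)} \le C\,\|\div\,\bu\|_{H^{r-1}(K)}$, and since $\div$ maps $\bH^r(K)$ boundedly into $H^{r-1}(K)$ we have $\|\div\,\bu\|_{H^{r-1}(K)} \le C\,\|\bu\|_{\bH^r(K)}$. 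Hence $\|\bw\|_{\bH^r(K)} \le C\,\|\bu\|_{\bH^r(K)}$ and the second estimate in (\ref{P_2}) follows. The main obstacle is thus not any single hard inequality but precisely this two-level use of $R$: full $\bH^{s+1}$-regularity of $\bv$ for its own estimate, but only $\bH^r$-regularity, controlled by $\bu$ alone, when estimating the potential $\psi$.
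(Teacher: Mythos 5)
Your proposal is correct and follows essentially the same route as the paper: the paper also sets $\bv := R(\div\,\bu)$ and $\psi := A(\bu - R(\div\,\bu))$, and obtains the second estimate in (\ref{P_2}) via the boundedness of $\div:\bH^r(K)\rightarrow H^{r-1}(K)$ combined with (R2) at the lower index, exactly as in your ``two-level'' use of $R$. The only difference is presentational: you spell out the bookkeeping (in particular the role of $s\ge r-1$ in guaranteeing $\bw\in\bH^r(K)$) that the paper delegates to the reference \cite{BespalovH_OEE}.
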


\begin{proof}
The proof is exactly the same as for Lemma~2.3 in \cite{BespalovH_OEE}.
We use the operators $R$ and $A$ to define $\psi$ and $\bv$:
\[
  \bv := R(\div\,\bu) \in \bH^{s+1}(K), \quad
  \psi := A(\bu - R(\div\,\bu)) \in H^{\min\{r,s+1\}+1}(K) = H^{r+1}(K).
\]
Hence, due to properties (R1) and (A1), the vector field $\bu$ can be decomposed
as in (\ref{P_1}):
\[
  \bu = (\bu - R(\div\,\bu)) + R(\div\,\bu) = \bcurl\,\psi + \bv.
\]
Inequalities (\ref{P_2}) are then obtained by using the continuity properties
of the Poincar{\' e}-type operators and the boundedness of the divergence operator
as a mapping $\bH^r(K) \rightarrow H^{r-1}(K)$ for $r \ge 0$.
(cf.~\cite[Lemma~2.3]{BespalovH_OEE}).
\end{proof}

\begin{remark} \label{rem_Poincare}
Note that $\bu \in \bH^r(K)$ implies that $\div\,\bu \in H^{r-1}(K)$ for $r > 0$.
That is why, it is assumed in Lemma~{\rm \ref{lm_Poincare}} that $s \ge r - 1$.
\end{remark}

%%%%%%%%%%%%%%%%%%%%%%%%%%%%%%%%%%%%%%%%%%%%%%%%%%%%%%%%%%%%%%%%%%%%%%%%%%%%%%%%
\subsection{Discrete Friedrichs inequalities} \label{sec_Friedrichs}

In this subsection we improve the discrete Friedrichs inequalities of
\cite[Theorem~1]{DemkowiczB_03_pIE}. This improvement has also become possible
due to the properties of the regularized Poincar{\' e} integral operators
which were established in~\cite{CostabelM_BRP} and summarised in the previous subsection.

\begin{lemma} \label{lm_Friedrichs}
There exist positive constants $C_1,\,C_2$ independent of $p$ such that
\begin{enumerate}
\item[{\rm (i)}]
for any $\bu \in \bCP^{\rm RT,0}_p(K)$ satisfying
$\<\bu,\bcurl\,\varphi\>_{0,K} = 0$ for all $\varphi \in \CP^0_p(K)$,
there holds
\be \label{Friedrichs_3}
    \|\bu\|_{0,K} \le C_1\,\|\div\,\bu\|_{\tH^{-1}(K)};
\ee
\item[{\rm (ii)}]
for any $\bu \in \bCP^{\rm RT}_p(K)$ satisfying
$\<\bu,\bcurl\,\varphi\>_{0,K} = 0$ for all $\varphi \in \CP_p(K)$,
there holds
\be \label{Friedrichs_4}
    \|\bu\|_{0,K} \le C_1\,\|\div\,\bu\|_{H^{-1}(K)}.
\ee
\end{enumerate}
\end{lemma}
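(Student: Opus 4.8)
The plan is to prove both inequalities via a regular decomposition supplied by Lemma \ref{lm_Poincare}, combined with the curl-orthogonality hypothesis and the inverse inequality of Lemma \ref{lm_inverse}. I treat part (i) first and then explain how (ii) follows by an analogous but simpler argument. Let $\bu \in \bCP^{\rm RT,0}_p(K)$ satisfy $\<\bu,\bcurl\,\varphi\>_{0,K}=0$ for all $\varphi\in\CP^0_p(K)$. Since $\bu$ is a polynomial vector field it lies in $\bH^r(K)$ for every $r>0$, so Lemma \ref{lm_Poincare} applies. First I would apply that lemma to write $\bu = \bcurl\,\psi + \bv$, where $\bv := R(\div\,\bu)$ and $\psi := A(\bu - R(\div\,\bu))$; the continuity estimates (\ref{P_2}) give in particular $\|\bv\|_{0,K}\le C\,\|\div\,\bu\|_{\tH^{-1}(K)}$ when we choose the regularity parameter $s=-1$ (permissible since $\div\,\bu\in H^t(K)$ for all $t$). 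This already isolates the divergence-free part $\bcurl\,\psi$ that must be controlled.

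The heart of the argument is to show that the curl-part $\bcurl\,\psi$ is small. I would test the decomposition against $\bu$ itself in $\bL^2(K)$:
\be \label{Fried_pf_split}
    \|\bu\|_{0,K}^2 = \<\bu,\bcurl\,\psi\>_{0,K} + \<\bu,\bv\>_{0,K}.
\ee
The second term is bounded by $\|\bu\|_{0,K}\,\|\bv\|_{0,K}\le C\,\|\bu\|_{0,K}\,\|\div\,\bu\|_{\tH^{-1}(K)}$ by Cauchy--Schwarz and the bound on $\bv$. For the first term, the orthogonality hypothesis is available only against \emph{polynomial} $\varphi$, whereas $\psi$ produced by $A$ is a general $H^2(K)$ function. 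The natural remedy is to approximate $\psi$ by a polynomial: using the projector $\pi_p$ from Lemma \ref{lm_2D_p-approx} (or rather $\pi_{p+1}$, matching the polynomial degree so that $\bcurl\,\pi_{p+1}\psi$ stays in the RT space), I would insert $\pm\,\bcurl\,\pi_{p+1}\psi$. Because $\pi_{p+1}\psi$ is a scalar polynomial, $\<\bu,\bcurl\,\pi_{p+1}\psi\>_{0,K}=0$ by the hypothesis (after checking the boundary/bubble bookkeeping so that the test function lies in $\CP^0_p(K)$; this is where I expect the bulk of the technical care to go). What remains is
\[
  \<\bu,\bcurl\,\psi\>_{0,K} = \<\bu,\bcurl(\psi - \pi_{p+1}\psi)\>_{0,K}
  \le \|\bu\|_{0,K}\,\|\bcurl(\psi - \pi_{p+1}\psi)\|_{0,K}.
\]
Lemma \ref{lm_2D_p-approx} gives $\|\psi - \pi_{p+1}\psi\|_{H^1(K)}\le C\,p^{-1}\|\psi\|_{H^2(K)}$, and $\|\psi\|_{H^2(K)}\le C\,\|\bu\|_{0,K}$ from (\ref{P_2}) with $r=1$ (roughly), so that $\|\bcurl(\psi-\pi_{p+1}\psi)\|_{0,K}\le C\,p^{-1}\|\bu\|_{0,K}$.

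Putting the two pieces into (\ref{Fried_pf_split}) yields
\[
  \|\bu\|_{0,K}^2 \le C\,p^{-1}\,\|\bu\|_{0,K}^2
                      + C\,\|\bu\|_{0,K}\,\|\div\,\bu\|_{\tH^{-1}(K)}.
\]
For $p$ large enough the first term is absorbed into the left-hand side, after which division by $\|\bu\|_{0,K}$ gives (\ref{Friedrichs_3}); the finitely many small values of $p$ are handled by equivalence of norms on a fixed finite-dimensional polynomial space. This absorption step is the crux, and it hinges precisely on the $p^{-1}$ decay, which is why the polynomial-degree bookkeeping in the approximation of $\psi$ must be done carefully. For part (ii), the same scheme applies with $\bu\in\bCP^{\rm RT}_p(K)$, orthogonality now tested against all $\varphi\in\CP_p(K)$, and with the $\tH^{-1}(K)$-norm replaced throughout by the $H^{-1}(K)$-norm; since no homogeneous boundary condition on $\bu$ is imposed, the bubble-function bookkeeping is in fact simpler, and the estimate (\ref{Friedrichs_4}) follows identically. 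The main obstacle, as noted, is ensuring that the polynomial used to replace $\psi$ is a legitimate test function in the orthogonality relation (correct degree and, in case (i), correct vanishing conditions), so that the leading term genuinely drops out.
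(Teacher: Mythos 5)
Your overall skeleton (Poincar\'e-type decomposition $\bu = \bcurl\,\psi + \bv$ with $\bv = R(\div\,\bu)$, then kill the curl part using the orthogonality hypothesis) starts in the same place as the paper, but the step you defer as ``boundary/bubble bookkeeping'' is in fact the entire content of the proof, and the step you do spell out does not work. First, the claimed $p^{-1}$ decay of $\|\bcurl(\psi - \pi_{p+1}\psi)\|_{0,K}$ is unsupported: (\ref{P_2}) gives $\|\psi\|_{H^{2}(K)} \le C\,\|\bu\|_{\bH^1(K)}$, not $\le C\,\|\bu\|_{0,K}$, and converting $\|\bu\|_{\bH^1(K)}$ into $\|\bu\|_{0,K}$ via Lemma~\ref{lm_inverse} costs a factor $p^{2}$, which wipes out the gain. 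In fact the point is moot: since $\div\,\bu \in \CP_{p-1}(K)$, property (R3) gives $R(\div\,\bu) \in \bCP^{\rm RT}_p(K)$, and then (A3) shows $\psi = A(\bu - R(\div\,\bu))$ is \emph{already} a polynomial in $\CP_p(K)$, so $\psi - \pi_{p+1}\psi \equiv 0$ and there is no small remainder to absorb. The term $\<\bu,\bcurl\,\psi\>_{0,K}$ must therefore be annihilated by the orthogonality hypothesis alone --- but it is not, because in part (i) that hypothesis only covers $\varphi \in \CP^0_p(K)$, and $\psi$ does not vanish on $\partial K$ (its tangential derivative there is $-R(\div\,\bu)\cdot\bn$, which is not zero). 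This is a genuine gap, not bookkeeping.

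The paper closes exactly this gap by correcting the lift rather than the potential: using $\int_K \div\,\bu\,d\bx = 0$ one finds a piecewise polynomial boundary potential $\phi$ with $\partial\phi/\partial\sigma = R(\div\,\bu)\cdot\bn$ on $\partial K$, extends it to $\tilde\phi \in \CP_p(K)$ by the polynomial extension theorem of Babu{\v s}ka et al., and bounds $\|\bcurl\,\tilde\phi\|_{0,K} \le C\,\|\div\,\bu\|_{\tH^{-1}(K)}$ via the isomorphism $\partial/\partial\sigma: H^{1/2}(\partial K)/{\field{R}} \to H^{-1/2}_*(\partial K)$, Lemma~\ref{lm_trace} with $s=0$, and properties (R1)--(R2). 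The operator $\CT\psi := R\psi - \bcurl\,\tilde\phi$ is then a right inverse of $\div$ mapping into $\bCP^{\rm RT,0}_p(K)$ with $\|\CT\psi\|_{0,K} \le C\,\|\psi\|_{\tH^{-1}(K)}$, and the lemma follows directly from the minimization identity $\|\bu\|_{0,K} = \min_{\varphi\in\CP^0_p(K)}\|\bu - \bcurl\,\varphi\|_{0,K} \le \|\CT(\div\,\bu)\|_{0,K}$ --- no absorption for large $p$ and no separate treatment of small $p$ is needed. To repair your argument you would have to build this normal-trace correction (or an equivalent bound on the boundary contribution of $\psi$), at which point you have reproduced the paper's proof.
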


\begin{proof}
Following the idea of \cite[Theorem~1]{DemkowiczB_03_pIE} the proof reduces to finding
a continuous right inverse of the divergence operator within appropriate polynomial
spaces. In particular, in order to prove the first statement of the lemma
one needs to construct an operator $\CT$ mapping
$\mathring{\CP}_{p-1}(K) := \{\psi \in \CP_{p-1}(K);\; \int_K \psi\,d\bx = 0\}$
into $\bCP^{\rm RT,0}_p(K)$ and satisfying the following properties:
\be \label{Friedrichs_5}
    \div\,(\CT\psi) = \psi \qquad \forall\,\psi \in \mathring{\CP}_{p-1}(K),
\ee
\be \label{Friedrichs_6}
    \|\CT\psi\|_{0,K} \le C\,\|\psi\|_{\tH^{-1}(K)} \qquad
    \forall\,\psi \in \mathring{\CP}_{p-1}(K).
\ee
Then, given any $\bu \in \bCP^{\rm RT,0}_p(K)$ such that
$\<\bu,\bcurl\,\varphi\>_{0,K} = 0$ for all $\varphi \in \CP^0_p(K)$,
we prove (\ref{Friedrichs_3}):
\beas
     \|\bu\|_{0,K}
     & = &
     \min_{\varphi \in \CP^0_p(K)} \|\bu - \bcurl\,\varphi\|_{0,K} \le
     \|\bu - (\bu - \CT(\div\,\bu))\|_{0,K}
     \cr\cr
     & = &
     \|\CT(\div\,\bu)\|_{0,K} \stackrel{(\ref{Friedrichs_6})}{\le}
     C\,\|\div\,\bu\|_{\tH^{-1}(K)}.
\eeas
Here, $\div\,\bu \in \mathring{\CP}_{p-1}(K)$ and the existence of
$\varphi \in \CP^0_p(K)$ satisfying $\bcurl\,\varphi = \bu - \CT(\div\,\bu)$ follows
from two facts:
\[
  \bu - \CT(\div\,\bu) \in \bCP^{\rm RT,0}_p(K)
\]
and
\[
  \div(\bu - \CT(\div\,\bu)) \stackrel{(\ref{Friedrichs_5})}{=}
  \div\,\bu - \div\,\bu = 0.
\]
Let us construct the operator $\CT$ satisfying (\ref{Friedrichs_5}), (\ref{Friedrichs_6}).
Let $\psi \in \mathring{\CP}_{p-1}(K)$.
Applying the regularized Poincar{\' e} operator $R$ we define
$\bv := R\psi$. Then $\bv \in \bCP^{\rm RT}_p(K)$, due to property (R3) of this operator.
Moreover, using property (R1) and the fact that $\int_K \psi\, d\bx = 0$
we conclude that $\bv\cdot\bn$ has zero average along $\partial K$:
\[
  \int\limits_{\partial K} \bv \cdot \bn\,d\sigma =
  \int\limits_{K} \div\,\bv\,d\bx =
  \int\limits_{K} \div(R\psi)\,d\bx =
  \int\limits_{K} \psi\,d\bx = 0.
\]
Hence, there exists a continuous
piecewise polynomial $\phi$ defined on $\partial K$ such that
$\phi|_{\ell} \in \CP_p(\ell)$ for any edge $\ell \subset \partial K$ and
$\frac{\partial\phi}{\partial\sigma} = \bv\cdot\bn$
on $\partial K$. Therefore, applying the polynomial extension result of
Babu{\v s}ka {\em et al.} \cite{BabuskaCMP_91_EPp}, we find a polynomial
$\tilde\phi \in \CP_p(K)$ such that $\tilde\phi|_{\partial K} = \phi$
and there holds
\[
    \ba{lcll}
    \|\bcurl\,\tilde\phi\|_{0,K}
    & \le &
    \|\tilde\phi\|_{H^1(K)} \le
    C\,\|\phi\|_{H^{1/2}(\partial K)/{\field{R}}}
    &
    \\[3ex]
    & \le &
    C\,\Big\|{\partial\phi\over{\partial\sigma}}\Big\|_{H^{-1/2}(\partial K)}
    & \mbox{(\cite[Lemma~2]{DemkowiczB_03_pIE})}
    \\[3ex]
    & = &
    C\,\|\bv\cdot\bn\|_{H^{-1/2}(\partial K)}
    & \mbox{(${\partial\phi\over{\partial\sigma}} = \bv\cdot\bn$)}
    \\[3ex]
    & \le &
    C\,(\|\bv\|_{0,K} + \|\div\,\bv\|_{\tH^{-1}(K)})
    & \mbox{(Lemma~\ref{lm_trace} with $s = 0$)}
    \\[3ex]
    & = &
    C\,(\|R\psi\|_{0,K} + \|\div(R\psi)\|_{\tH^{-1}(K)})\qquad
    & \mbox{($\bv = R\psi$).}
    \ea
\]
Hence, using properties (R1) and (R2) of the operator $R$, we obtain
\be \label{Friedrichs_7}
    \|\bcurl\,\tilde\phi\|_{0,K} \le C\,\|\psi\|_{\tH^{-1}(K)}.
\ee
Now we can define the desired operator $\CT$ as
$\CT\psi = R\psi - \bcurl\,\tilde\phi$. It is easy to check that
$\CT:\,\mathring{\CP}_{p-1}(K) \rightarrow \bCP^{\rm RT,0}_p(K)$ and
(\ref{Friedrichs_5}) holds. Making use of (\ref{Friedrichs_7})
and the continuity of the operator
$R:\,H^{-1}(K) \rightarrow \bL^2(K)$ (see (R2)), we also prove (\ref{Friedrichs_6}).

The proof of statement (ii) is analogous. In this case we can use the operator
$R{:}\,H^{-1}(K) \rightarrow \bL^2(K)$ for the desired continuous right inverse of $\div$.
Then $R \equiv \CT$ maps $\CP_{p-1}(K)$ into $\bCP^{\rm RT}_p(K)$ and (\ref{Friedrichs_4})
is derived similarly as above.
\end{proof}

%%%%%%%%%%%%%%%%%%%%%%%%%%%%%%%%%%%%%%%%%%%%%%%%%%%%%%%%%%%%%%%%%%%%%%%%%%%%%%%%
\subsection{Existing $H^1$- and $\bH(\div)$-conforming interpolation operators} \label{sec_inter_old}

Let us briefly sketch the definitions and summarise the properties
of the $H^1$-conforming interpolation operator $\Pi^{1}_p$
and the $\bH(\div)$-conforming interpolation operator $\Pi^{\div}_p$
(see \cite{DemkowiczB_03_pIE} for details).

Let $g \in H^{1+r}(K)$, $r > 0$. To define the interpolant $\Pi^1_p\, g$, one starts
with the standard linear interpolation of $g$ at the vertices of $K$:
\[
  g_1 \in \CP_1(K),\quad g_1 = g \quad \hbox{at each vertex of $K$}.
\]
Then, for each edge $\ell \subset \partial K$, we define a polynomial
$g_{2,\ell}$ by using the projection
\be \label{int_1}
    g_{2,\ell} \in \CP_p^0(\ell):\quad
    \|(g - g_1)|_{\ell} - g_{2,\ell}\|_{\tH^{1/2}(\ell)} \rightarrow \min.
\ee
Extending $g_{2,\ell}$ by zero onto the remaining part of $\partial K$ (and keeping its
notation), using some polynomial extension $\CE_p$ from the boundary,
and summing up over all edges we define
\be \label{int_2}
    g_2^p := \sum\limits_{\ell \subset \partial K} \CE_p(g_{2,\ell}) \in \CP_p(K).
\ee
Finally, we define the polynomial bubble $g_3^p$ by projection in the $H^1$-semi-norm
\be \label{int_3}
    g_{3}^p \in \CP_p^0(K):\quad
    |g - (g_1 + g_{2}^p + g_3^p)|_{H^1(K)} \rightarrow \min.
\ee
Then the interpolant $\Pi^1_p\, g$ is defined as the sum
\be \label{int_4}
    \Pi^1_p\, g := g_1 + g_2^p +g_3^p \in \CP_p(K).
\ee

Now we proceed to the $\bH(\div)$-conforming interpolation operator.
Given a vector field $\bu \in \bH^r(K) \cap \bH(\div,K)$ with $r > 0$,
the interpolant $\tilde\bu^p = \Pi^{\div}_p\,\bu \in \bCP^{\rm RT}_p(K)$
is also defined as the sum of three terms:
\[  %% \be \label{Old_E^p}
    \tilde\bu^p = \bu_1 + \bu^p_2 + \tilde\bu^p_3.
\]  %% \ee
Here, $\bu_1$ is a lowest order interpolant defined as
\be \label{E_1}
    \bu_1 = \sum\limits_{\ell \subset \partial K}
    \Big(\int\limits_{\ell} \bu\cdot\bn\,d\sigma\Big)\,\bphi_\ell\;,
\ee
where $\bphi_\ell$ are the standard basis functions
(associated with edges $\ell$) for $\bCP_1^{\rm RT}(K)$ such that
\[
  \bphi_\ell\cdot\bn =
  \cases{
         1 & \hbox{on $\ell$},\cr
         0 & \hbox{on $\partial K \backslash \ell$}.\cr
        }
\]
For any edge $\ell \subset \partial K$ one has
\be \label{E-E_1}
    \int\limits_{\ell} (\bu - \bu_1) \cdot \bn\,d\sigma = 0.
\ee
Hence, there exists a function $\psi$, defined on the boundary $\partial K$,
such that
\be \label{psi}
    {\partial\psi\over{\partial \sigma}} =  (\bu - \bu_1) \cdot \bn,\quad
    \psi = 0 \ \ \hbox{at all vertices}.
\ee
Then, for each edge $\ell$, we define $\psi_2^{\ell} \in \CP^0_p(\ell)$ by projection
\be \label{psi_2^l}
    \<\psi|_{\ell} - \psi_2^{\ell}, \phi\>_{\tH^{1/2}(\ell)} = 0\quad
    \forall \phi \in \CP^0_p(\ell)
\ee
(see Remark~\ref{rem_edge-norms} for the expression of
$\<\cdot,\cdot\>_{\tH^{1/2}(\ell)}$).
Extending $\psi_2^{\ell}$ by zero from $\ell$ onto $\partial K$ (and keeping
its notation), we denote by $\psi_{2,p}^{\ell} \in \CP_p(K)$
a polynomial extension of $\psi_2^{\ell}$ from $\partial K$ onto $K$, i.e.,
\be \label{psi_2,p^l}
    \psi_{2,p}^{\ell} \in \CP_p(K),\quad
    \psi_{2,p}^{\ell}|_{\ell} = \psi_{2}^{\ell},\quad
    \psi_{2,p}^{\ell}|_{\partial K\backslash\ell} = 0.
\ee
Then we set
\be \label{E_2^p}
    \bu_2^p = \sum\limits_{\ell \subset \partial K} \bu^p_{2,\ell},\ \ 
    \hbox{where \ } \bu^p_{2,\ell} = \bcurl\, \psi_{2,p}^{\ell}.
\ee
The interior interpolant $\tilde\bu^p_3$ is a vector bubble function living
in $\bCP^{\rm RT,0}_p(K)$ and satisfying the following system of equations:
\bea
     \<\div(\bu - (\bu_1 + \bu_2^p + \tilde\bu_3^p)),\div\,\bv\>_{0,K} = 0
     & \quad &
     \forall \bv \in \bCP^{\rm RT,0}_p(K),
     \label{Old_E_3^p_1}
     \\[5pt]
     \<\bu - (\bu_1 + \bu_2^p + \tilde\bu_3^p),\bcurl\,\phi\>_{0,K} = 0
     & \quad &
     \forall \phi \in \CP^{0}_p(K).
     \label{Old_E_3^p_2}
\eea

These interpolation operators satisfy the following properties.

\begin{prop} \label{pr_old_properties}
{\rm (cf. \cite[Propositions~1-3]{DemkowiczB_03_pIE}).}
\begin{itemize}

\item[$1^{\circ}$.]
For $r > 0$ the operators $\Pi_p^{1}:\; \bH^{1+r}(K) \rightarrow H^{1}(K)$
and $\Pi_p^{\div}:\; \bH^r(K) \cap \bH(\div,K) \rightarrow \bH(\div,K)$
are well defined and bounded, with corresponding operator norms independent
of the polynomial degree $p$.

\item[$2^{\circ}$.]
The operators $\Pi_p^{1}$ and $\Pi_p^{\div}$ preserve scalar polynomials in
$\CP_p(K)$ and polynomial vector fields in $\bCP^{\rm RT}_p(K)$, respectively.

\item[$3^{\circ}$.]
For $r > 0$, the diagram in (\ref{Old_deRham}) commutes.

\end{itemize}
\end{prop}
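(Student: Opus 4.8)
The plan is to establish the three properties componentwise, treating separately the three constituent interpolants of each operator and following the template of \cite{DemkowiczB_03_pIE}, but with the $p$-independent constant in the interior estimate supplied by the sharpened discrete Friedrichs inequality of Lemma~\ref{lm_Friedrichs}. Throughout I exploit the structural fact that $\Pi^1_p g = g_1 + g_2^p + g_3^p$ and $\Pi^{\div}_p\bu = \bu_1 + \bu_2^p + \tilde\bu_3^p$ decompose into a lowest-order (vertex, resp.\ edge-flux) term, an edge term built from $\tH^{1/2}(\ell)$-projections, and an interior bubble term, and that the same $\tH^{1/2}(\ell)$-inner product governs the edge stages of both operators.

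For the boundedness in $1^\circ$ I would bound each term with a $p$-independent constant. For $\Pi^1_p$: the vertex interpolant $g_1$ is controlled by the embedding $H^{1+r}(K)\hookrightarrow C(\bar K)$ (valid since $r>0$ in two dimensions); the edge contribution $g_2^p$ is controlled by applying the trace theorem to $(g-g_1)|_\ell\in\tH^{1/2}(\ell)$ (the endpoint values vanish since $g_1$ matches $g$ at the vertices), noting that the projection in (\ref{int_1}) is stable in $\tH^{1/2}(\ell)$, and then invoking the $p$-uniform stability of the polynomial extension $\CE_p$ from \cite{BabuskaCMP_91_EPp}; the interior bubble $g_3^p$, being the minimizer in the $H^1$-seminorm, is bounded by $|g-(g_1+g_2^p)|_{H^1(K)}$. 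For $\Pi^{\div}_p$: the edge fluxes defining $\bu_1$ are bounded via the normal-trace estimate of Lemma~\ref{lm_trace} with $s=0$; the term $\bu_2^p$ (a sum of edge contributions $\bcurl\,\psi_{2,p}^{\ell}$) is controlled through the stream function $\psi$ of (\ref{psi}), its stable $\tH^{1/2}(\ell)$-projection (\ref{psi_2^l}), and the same polynomial extension; and the interior interpolant $\tilde\bu_3^p$ is handled by splitting it according to its divergence, with the divergence-carrying part controlled by $\|\div(\bu-\bu_1-\bu_2^p)\|$ through a continuous right inverse of $\div$ inside $\bCP^{\rm RT,0}_p(K)$ and the divergence-free remainder estimated by Lemma~\ref{lm_Friedrichs}(i).

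Property $2^\circ$ follows because each stage reproduces the relevant polynomial. If $g\in\CP_p(K)$, then $g-g_1$ vanishes at the vertices, its edge restriction lies in $\CP_p^0(\ell)$ and is reproduced by the idempotent $\tH^{1/2}(\ell)$-projection and by $\CE_p$, and the remaining bubble is reproduced by the $H^1$-seminorm projection, so $\Pi^1_p g=g$; the argument for $\Pi^{\div}_p$ on $\bCP^{\rm RT}_p(K)$ is identical, using in addition that a polynomial vector field is its own unique solution of the interior system (\ref{Old_E_3^p_1})--(\ref{Old_E_3^p_2}), uniqueness being guaranteed by Lemma~\ref{lm_Friedrichs}(i). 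For the commuting diagram $3^\circ$ I would verify the two squares of (\ref{Old_deRham}) directly. For the right square, since $\bu_2^p$ is a curl it is divergence-free, $\div\,\bCP^{\rm RT,0}_p(K)=\mathring{\CP}_{p-1}(K)$, and (\ref{Old_E_3^p_1}) identifies $\div\,\tilde\bu_3^p$ as the $L^2(K)$-projection of $\div(\bu-\bu_1)$ onto $\mathring{\CP}_{p-1}(K)$; since the edge fluxes of $\bu_1$ sum to $\int_K\div\,\bu$, the term $\bu_1$ already supplies the correct mean value and one obtains $\div\,\Pi^{\div}_p\bu=\Pi^0_{p-1}\div\,\bu$. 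For the left square I take $\bu=\bcurl\,g$: the stream function in (\ref{psi}) then coincides with $g$ up to the vertex correction, so the identical $\tH^{1/2}(\ell)$-projections in (\ref{int_1}) and (\ref{psi_2^l}) force $\bu_2^p=\bcurl\,g_2^p$; equation (\ref{Old_E_3^p_2}) together with uniqueness forces $\tilde\bu_3^p=\bcurl\,g_3^p$, while $\bcurl\,g_1=\bu_1$ for the lowest-order parts, giving $\Pi^{\div}_p\bcurl\,g=\bcurl\,\Pi^1_p g$.

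The main obstacle is the $p$-uniform stability of the interior interpolant $\tilde\bu_3^p$ in $1^\circ$: the constraint-form definition (\ref{Old_E_3^p_1})--(\ref{Old_E_3^p_2}) yields a $p$-independent bound only once its divergence-free component is controlled by a discrete Friedrichs inequality whose constant does not grow with $p$, and it is precisely the improved Lemma~\ref{lm_Friedrichs} --- obtained through the regularized Poincar\'e operators --- that furnishes this. A secondary but delicate point is verifying that the edge inner products appearing in (\ref{int_1}) and (\ref{psi_2^l}) are genuinely the same $\tH^{1/2}(\ell)$-product, as this compatibility is what makes the left square of the de Rham diagram close exactly.
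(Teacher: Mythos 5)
Your proposal is essentially correct, but note first that the paper does not prove Proposition~\ref{pr_old_properties} at all: the ``cf.~Propositions~1--3'' is a citation, and the result is imported wholesale from Demkowicz and Babu{\v s}ka \cite{DemkowiczB_03_pIE}, with Section~\ref{sec_inter_old} only recalling the definitions. What you have written is therefore a reconstruction of the DB03 arguments, and it matches almost step for step the scheme the paper itself deploys later for the \emph{new} operator (Steps~1--3 in the proof of Theorem~\ref{thm_stab}, and the proof of Theorem~\ref{thm_deRham}): lowest-order term via normal-trace estimates, edge term via the stream function $\psi$ of (\ref{psi}), its $\tH^{1/2}(\ell)$-projection and the polynomial extension of \cite{BabuskaCMP_91_EPp}, interior term via the discrete Helmholtz split and a discrete Friedrichs inequality. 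Your closing observation --- that the left square of (\ref{Old_deRham}) closes only because (\ref{int_1}) and (\ref{psi_2^l}) use the same $\tH^{1/2}(\ell)$-inner product (one should add: and the same extension operator in (\ref{int_2}) and (\ref{psi_2,p^l})) --- is correct and worth making explicit.

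Two caveats. First, a concrete (though easily repaired) slip: Lemma~\ref{lm_trace} with $s=0$ does \emph{not} suffice to define the single-edge fluxes $\int_\ell \bu\cdot\bn\,d\sigma$ entering (\ref{E_1}), since the indicator function of an edge fails to lie in $H^{1/2}(\partial K)$, so the $H^{-1/2}(\partial K)$ regularity of $\bu\cdot\bn$ cannot be paired against it. One must fix $s\in(0,\min\{\frac 12,r\})$ and test with $\phi_\ell\in H^{1-s}(K)$ equal to $1$ on $\ell$ and $0$ on $\partial K\setminus\ell$, exactly as in Step~1 of the paper's proof of Theorem~\ref{thm_stab}; this is precisely where the hypothesis $r>0$ enters, and with $s=0$ the step fails. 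Second, your attribution of the $p$-uniform interior bound to the sharpened Lemma~\ref{lm_Friedrichs} inverts the logical order: for $\Pi^{\div}_p$ the interior system (\ref{Old_E_3^p_1})--(\ref{Old_E_3^p_2}) is posed in $L^2$, and the original discrete Friedrichs inequality of \cite[Theorem~1]{DemkowiczB_03_pIE} already yields $p$-independence; the improvement to the weaker norms $\tH^{-1}(K)$ and $H^{-1}(K)$ in Lemma~\ref{lm_Friedrichs} is what the new operator's $\tH^{-1/2}$-based system (\ref{E_3^p_1}) requires. Invoking the stronger lemma is harmless overkill, but it misstates the dependencies, since DB03's proposition predates it.
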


The next proposition gives optimal interpolation error estimates
for the operators $\Pi_p^{1}$ and $\Pi_p^{\div}$. These estimates
are proved in~\cite{BespalovH_OEE} (see Theorems~4.1 and~4.2 therein).

\begin{prop} \label{pr_old_estimates}

\begin{itemize}

\item[{\rm (i)}]
Let $g \in H^{1+r}(K)$, $r >0$. Then there exists a positive constant $C$
independent of $p$ and $g$ such that
\[  %% \be \label{H1_estimate}
    |g - \Pi_p^1\, g|_{H^1(K)} \le C\,p^{-r}\,\|g\|_{H^{1+r}(K)}.
\]  %% \ee

\item[{\rm (ii)}]
Let $\bu \in \bH^r(\div, K)$, $r > 0$. Then there exists a positive constant $C$
independent of $p$ and $\bu$ such that
\[
  \|\bu - \Pi_p^{\div}\,\bu\|_{\bH(\div,K)} \le
  C\, p^{-r}\, \|\bu\|_{\bH^r(\div,K)}.
\]

\end{itemize}
\end{prop}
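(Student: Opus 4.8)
The plan is to establish the scalar estimate (i) first by a direct analysis on the boundary, and then to bootstrap it to the vector estimate (ii) via the regular decomposition of Lemma~\ref{lm_Poincare} together with the commuting diagram (\ref{Old_deRham}). For part (i) the starting observation is that the interior bubble $g_3^p$ in (\ref{int_3}) is defined as the $H^1$-seminorm minimiser, so that $|g - \Pi_p^1 g|_{H^1(K)} = \min_{w \in \CP_p^0(K)} |g - g_1 - g_2^p - w|_{H^1(K)}$, and it suffices to exhibit one good competitor. I would take a polynomial approximation $q = \pi_p g \in \CP_p(K)$ from Lemma~\ref{lm_2D_p-approx}, which gives $\|g - q\|_{H^1(K)} \le C\,p^{-r}\,\|g\|_{H^{1+r}(K)}$. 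Since $q$ need not share the boundary trace of $g_1 + g_2^p$, I would correct it: using the polynomial extension of Babu{\v s}ka {\em et al.}~\cite{BabuskaCMP_91_EPp} I extend the (continuous) boundary mismatch $\beta := (g_1 + g_2^p - q)|_{\partial K}$ to a polynomial $\CE_p\beta \in \CP_p(K)$ with $\|\CE_p\beta\|_{H^1(K)} \le C\,\|\beta\|_{H^{1/2}(\partial K)/\field{R}}$. Then $q + \CE_p\beta$ has trace $(g_1 + g_2^p)|_{\partial K}$, so $q + \CE_p\beta - g_1 - g_2^p \in \CP_p^0(K)$ is an admissible bubble, and the triangle inequality reduces everything to estimating $|g - q|_{H^1(K)}$ (already controlled) and $\|\beta\|_{H^{1/2}(\partial K)/\field{R}}$.

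The estimate of $\beta$ is where the choice of the edge norm pays off and where I expect the main difficulty. Edge by edge, $\beta|_\ell = (g_1 + g_{2,\ell} - g|_\ell) + (g|_\ell - q|_\ell)$. The second summand is handled by the trace theorem, $\|g|_\ell - q|_\ell\|_{H^{1/2}(\ell)} \le C\,\|g - q\|_{H^1(K)} \le C\,p^{-r}\,\|g\|_{H^{1+r}(K)}$, while the first summand is precisely the error of the $\tH^{1/2}(\ell)$-projection (\ref{int_1}) of $(g - g_1)|_\ell$, bounded by a one-dimensional $p$-approximation estimate in $\tH^{1/2}(\ell)$ for the trace $g|_\ell \in H^{1/2+r}(\ell)$, which again yields the rate $p^{-r}$. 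The delicate point, and the genuine obstacle of the whole proposition, is to pass from these edgewise $\tH^{1/2}$-bounds to the global quotient norm $\|\beta\|_{H^{1/2}(\partial K)/\field{R}}$ with a $p$-independent constant and without degrading the exponent at the vertices: since $g_1$ matches $g$ at the vertices and the pieces $g_{2,\ell}$ vanish there, only the vertex values of $g - q$ enter, and one must argue that the continuity of $\beta$ across the corners lets these be absorbed into the quotient $H^{1/2}$-norm at the full rate $p^{-r}$.

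For part (ii) I would treat the divergence and the field separately. The divergence part is immediate from the commuting diagram (\ref{Old_deRham}): $\div(\bu - \Pi_p^{\div}\bu) = \div\,\bu - \Pi_{p-1}^0(\div\,\bu)$, so that $\|\div(\bu - \Pi_p^{\div}\bu)\|_{0,K}$ is the $L^2$-projection error of $\div\,\bu \in H^r(K)$, bounded by $C\,p^{-r}\,\|\div\,\bu\|_{H^r(K)}$ through Lemma~\ref{lm_2D_p-approx}. For the field part I would invoke Lemma~\ref{lm_Poincare} with $s = r$, writing $\bu = \bcurl\,\psi + \bv$ with $\bv = R(\div\,\bu) \in \bH^{r+1}(K)$ and $\psi \in H^{1+r}(K)$ subject to the bounds (\ref{P_2}). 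The commuting diagram gives $\Pi_p^{\div}\bcurl\,\psi = \bcurl\,\Pi_p^1\psi$, hence $\bu - \Pi_p^{\div}\bu = \bcurl(\psi - \Pi_p^1\psi) + (\bv - \Pi_p^{\div}\bv)$. The first term satisfies $\|\bcurl(\psi - \Pi_p^1\psi)\|_{0,K} = |\psi - \Pi_p^1\psi|_{H^1(K)}$, which is controlled by part (i) together with $\|\psi\|_{H^{1+r}(K)} \le C\,\|\bu\|_{\bH^r(K)}$.

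For the remaining term I would build a divergence-conforming polynomial approximant $\bv_p := R(\Pi_{p-1}^0\div\,\bu) \in \bCP^{\rm RT}_p(K)$, which is admissible by properties (R1) and (R3) and has $\div\,\bv_p = \Pi_{p-1}^0\div\,\bu$. Then $\div(\bv - \bv_p) = \div\,\bu - \Pi_{p-1}^0\div\,\bu$ already decays like $p^{-r}$ in $\bL^2$, while, by the continuity (R2) of $R$ from $H^{r'-1}(K)$ to $\bH^{r'}(K)$ with a fixed small $r' \in (0,\min\{r,1\})$, the field error $\|\bv - \bv_p\|_{\bH^{r'}(K)} = \|R(\div\,\bu - \Pi_{p-1}^0\div\,\bu)\|_{\bH^{r'}(K)} \le C\,\|\div\,\bu - \Pi_{p-1}^0\div\,\bu\|_{H^{r'-1}(K)}$ decays even faster, namely like $p^{-(r+1-r')}$. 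Using the polynomial preservation of $\Pi_p^{\div}$ I write $\bv - \Pi_p^{\div}\bv = (\bv - \bv_p) - \Pi_p^{\div}(\bv - \bv_p)$, and the $p$-uniform stability from Proposition~\ref{pr_old_properties} (applied with the small exponent $r'$) bounds $\|\Pi_p^{\div}(\bv - \bv_p)\|_{\bH(\div,K)}$ by $C(\|\bv - \bv_p\|_{\bH^{r'}(K)} + \|\div(\bv - \bv_p)\|_{0,K}) \le C\,p^{-r}\,\|\bu\|_{\bH^r(\div,K)}$. Combining the two parts yields the claimed estimate. It is worth stressing that the vector statement is in essence a reduction to the scalar one: for a divergence-free field the Poincar{\' e} term $\bv = R(\div\,\bu)$ vanishes and (ii) collapses to (i), so the sharp boundary estimate of the second paragraph is the true heart of the argument, while the Poincar{\' e} operator $R$ is exactly the device that supplies a polynomial approximant controlling the field and its divergence simultaneously.
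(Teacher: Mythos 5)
The paper does not prove this proposition itself: it quotes it from \cite{BespalovH_OEE} (Theorems~4.1 and~4.2 there). Your part (ii) is correct and is essentially the route of that reference: decompose $\bu = \bcurl\,\psi + \bv$ via the regularized Poincar\'e operators (Lemma~\ref{lm_Poincare}), commute $\Pi^{\div}_p$ past $\bcurl$ using (\ref{Old_deRham}) to reduce the stream-function part to (i), and treat $\bv$ through the explicit RT competitor $\bv_p = R(\Pi^0_{p-1}\div\,\bu)$ (admissible by (R1), (R3)), polynomial reproduction, and the $p$-uniform stability of Proposition~\ref{pr_old_properties}; your duality bound $\|\div\,\bu - \Pi^0_{p-1}\div\,\bu\|_{H^{r'-1}(K)} \le C\,p^{-(r+1-r')}\|\div\,\bu\|_{H^r(K)}$ is a standard Aubin--Nitsche argument and checks out. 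So, modulo part (i), part (ii) is complete.

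Part (i), however, has a genuine gap, and you mislocate the difficulty. The step you flag as ``the genuine obstacle'' --- passing from edgewise bounds to $\|\beta\|_{H^{1/2}(\partial K)/\field{R}}$ --- is in fact routine: $(g_1 + g_2^p - g)|_{\partial K}$ is the sum over edges of the zero extensions of the edge projection errors, each of which vanishes at the endpoints and is measured in $\tH^{1/2}(\ell)$, and extension by zero is bounded from $\tH^{1/2}(\ell) = \big(L^2(\ell), H^1_0(\ell)\big)_{1/2,2}$ into $H^{1/2}(\partial K)$ by interpolating the trivial $L^2$ and $H^1_0 \to H^1$ endpoint bounds, with constants that see no $p$ at all; the remaining piece $(g-q)|_{\partial K}$ is controlled globally by the trace theorem, so the vertex values never cause trouble. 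What you do \emph{not} prove --- and dismiss in one clause as ``a one-dimensional $p$-approximation estimate\dots which again yields the rate $p^{-r}$'' --- is the bound $\min_{\phi\in\CP^0_p(\ell)}\|(g - g_1)|_\ell - \phi\|_{\tH^{1/2}(\ell)} \le C\,p^{-r}\,\|g\|_{H^{1+r}(K)}$ with no $\eps$-loss or logarithmic factor. That estimate is precisely where the optimality asserted in the proposition is won or lost: it is the technical core of \cite{BespalovH_OEE}, not a textbook fact, and the obvious approaches (interpolating best-approximation errors between $L^2$ and $H^1_0$, which stalls at identifying the intermediate spaces at the critical index $1/2$, or the original arguments of \cite{DemkowiczB_03_pIE}) deliver only suboptimal rates. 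Without a proof or a precise citation for this one-dimensional estimate, your argument establishes (i) --- and hence (ii) --- only up to an arbitrarily small loss in the exponent.
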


%%%%%%%%%%%%%%%%%%%%%%%%%%%%%%%%%%%%%%%%%%%%%%%%%%%%%%%%%%%%%%%%%%%%%%%%%%%%%%%%
\section{Proofs of theorems} \label{sec_proofs}

In this section we prove the main results of the paper.

\bigskip

\noindent{\bf Proof of Theorem~\ref{thm_stab}.}
Let $\bu \in \bH^r(K) \cap \tilde\bH^{-1/2}(\div,K)$, $r > 0$.
We will study each term on the right-hand side of (\ref{E^p}).
Throughout the proof we denote by $s$ a small parameter such that
$0 < s < \min\,\{\frac 12,r\}$ for given $r>0$.

{\bf Step 1.} Fixing an edge $\ell \subset \partial K$ and using a function
\[
  \phi_{\ell} \in H^{1-s}(K),\quad
  \phi_{\ell} = \cases{
                         1 & \hbox{on $\ell$},\cr
                         0 & \hbox{on $\partial K \backslash \ell$}\cr
                        }
\]
as a test function, we integrate by parts to obtain
\beas
     \int\limits_{\ell} \bu \cdot \bn\,d\sigma
     & = &
     \int\limits_{\partial K} (\bu \cdot \bn)\,\phi_{\ell}\,d\sigma =
     \int\limits_{K} (\div\,\bu)\,\phi_{\ell}\,d\bx +
     \int\limits_{K} \bu \cdot \grad\phi_{\ell}\,d\bx
     \\[3pt]
     & \le &
     \|\div\,\bu\|_{\tH^{-1+s}(K)}\,\|\phi_{\ell}\|_{H^{1-s}(K)} +
     \|\bu\|_{\bH^s(K)}\,\|\grad\phi_{\ell}\|_{\bH^{-s}(K)}
     \\[3pt]
     & \le &
     C(\phi_{\ell},s)
     \left(
           \|\bu\|_{\bH^r(K)} + \|\div\,\bu\|_{\tH^{-1/2}(K)}
     \right).
\eeas
Note that if $\div\,\bu \in H^{-1+s}(K)$ then an extension to
$\div\,\bu \in \tH^{-1+s}(K)$ exists but is not unique. However,
by assumption $\div\,\bu \in \tH^{-1/2}(K) \subset \tH^{-1+s}(K)$,
which is a unique extension (see~\cite{Mikhailov_08_ATE} for details).
Thus, $\bu_1$ in (\ref{E_1}) is well defined.
Moreover, since $\bu_1$ is a lowest order interpolant, we find
by the equivalence of norms in finite-dimensional spaces that
\[  %% \be \label{stab1}
    \|\bu_1\|_{\bH(\div,K)} \le
    C \sum\limits_{\ell \subset \partial K}
    \Big| \int\limits_{\ell} \bu \cdot \bn\,d\sigma\Big| \le
    C\left(\|\bu\|_{\bH^r(K)} + \|\div\,\bu\|_{\tH^{-1/2}(K)}\right).
\]  %% \ee
Hence, due to the finite dimensionality of $\bu_1$, we obtain by using Lemma~\ref{lm_trace}
\bea
     \|(\bu - \bu_1) \cdot \bn\|_{H^{-1/2+s}(\partial K)}
     & \le &
     C \left(
             \|\bu - \bu_1\|_{\bH^s(K)} + \|\div (\bu - \bu_1)\|_{\tH^{-1+s}(K)}
      \right)
     \nonumber
     \\[3pt]
     & \le &
     C \left(
             \|\bu\|_{\bH^s(K)} + \|\div\,\bu\|_{\tH^{-1+s}(K)} +
             \|\bu_1\|_{\bH(\div,K)}
      \right)
     \nonumber
     \\[3pt]
     & \le &
     C \left(
             \|\bu\|_{\bH^r(K)} + \|\div\,\bu\|_{\tH^{-1/2}(K)}
      \right).
     \label{stab2}
\eea

{\bf Step 2.} From the construction of $\bu_1$ and from the result of
Step~1 we conclude that
\[
  (\bu - \bu_1) \cdot \bn \in H^{-1/2+s}(\partial K),\quad
  \int\limits_{\partial K} (\bu - \bu_1) \cdot \bn\,d\sigma = 0.
\]
Therefore, due to the isomorphism (see \cite[Lemma~2]{DemkowiczB_03_pIE})
\[
  \frac{\partial}{\partial \sigma}: H^{1/2+s}(\partial K)/{\field{R}} \rightarrow
  H^{-1/2+s}_{*}(\partial K) =
  \{\phi \in H^{-1/2+s}(\partial K);\; \<u,1\>_{0,\partial K} = 0\},
\]
the function $\psi$ in (\ref{psi}) is well defined, $\psi \in H^{1/2+s}(\partial K)$,
$\psi|_{\ell} \in \tH^{1/2}(\ell)$ for any edge $\ell \subset \partial K$,
and
\be \label{stab3}
    \sum\limits_{\ell \subset \partial K}
    \|\psi|_{\ell}\|_{\tH^{1/2}(\ell)} \le
    C\,\sum\limits_{\ell \subset \partial K}
    \|\psi|_{\ell}\|_{H^{1/2+s}_0(\ell)} \le
    C\,\|\psi\|_{H^{1/2+s}(\partial K)} \le
    C\,\|(\bu - \bu_1) \cdot \bn\|_{H^{-1/2+s}(\partial K)}.
\ee
Hence, (\ref{psi_2^l}) is uniquely solvable and
\be \label{stab4}
    \|\psi_2^{\ell}\|_{\tH^{1/2}(\ell)} \le
    C\, \|\psi|_{\ell}\|_{\tH^{1/2}(\ell)}.
\ee
Furthermore, applying the polynomial extension result of
Babu{\v s}ka {\em et al.} \cite{BabuskaCMP_91_EPp}, we find the desired
polynomial $\psi_{2,p}^{\ell} \in \CP_p(K)$ (see (\ref{psi_2,p^l}))
satisfying
\be \label{stab5}
    \|\psi_{2,p}^{\ell}\|_{H^{1}(K)} \le
    C\, \|\psi_2^{\ell}\|_{\tH^{1/2}(\ell)}.
\ee
Thus, $\bu_2^p$ in (\ref{E_2^p}) is well defined.
Putting together (\ref{stab3})--(\ref{stab5}) we find
\beas
     \|\bu_2^p\|_{0,K} \le
     C \sum\limits_{\ell \subset \partial K}
     \|\bcurl\,\psi_{2,p}^{\ell}\|_{0,K} \le
     C \sum\limits_{\ell \subset \partial K}
     \|\psi_{2,p}^{\ell}\|_{H^1(K)} \le
     C\,\|(\bu - \bu_1) \cdot \bn\|_{H^{-1/2+s}(\partial K)}.
\eeas
Hence, making use of (\ref{stab2}), we obtain
\be \label{stab6}
    \|\bu_2^p\|_{\bH(\div,K)} = \|\bu_2^p\|_{0,K} \le
    C \left(
            \|\bu\|_{\bH^r(K)} + \|\div\,\bu\|_{\tH^{-1/2}(K)}
     \right).
\ee
{\bf Step 3.} The vector bubble function $\bu_3^p$ is uniquely defined by
(\ref{E_3^p_1})--(\ref{E_3^p_2}). To estimate the norms of $\bu^p_3$ and
$\div\,\bu^p_3$ we use the discrete Helmholtz decomposition
\be \label{stab6_1}
    \bu_3^p = \bv_p + \bcurl\,\phi_p,
\ee
where $\phi_p \in \CP_p^0(K)$ and $\bv_p \in \bCP^{\rm RT,0}_p(K)$ is such that
$\<\bv_p,\bcurl\,\varphi\>_{0,K} = 0$ for all $\varphi \in \CP^0_p(K)$.

From (\ref{E_3^p_1}) one has by using the result of Step~1
\bea \label{stab7}
     \|\div\,\bu_3^p\|_{\tH^{-1/2}(K)}
     & \le &
     C\,\|\div(\bu - \bu_1)\|_{\tH^{-1/2}(K)} \le
     C\left(\|\div\,\bu\|_{\tH^{-1/2}(K)} + |\div\,\bu_1|\right)
     \nonumber
     \\[3pt]
     & \le &
     C\left(\|\bu\|_{\bH^{r}(K)} + \|\div\,\bu\|_{\tH^{-1/2}(K)}\right).
\eea
Then, applying Lemma~\ref{lm_Friedrichs}(i) and
recalling that $\div\,\bv_p = \div\,\bu^p_3$, we find
\be \label{stab8}
    \|\bv_p\|_{0,K} \le
    C\,\|\div\,\bv_p\|_{\tH^{-1}(K)} \le C\,\|\div\,\bu^p_3\|_{\tH^{-1/2}(K)}.
\ee
Since $\<\bv_p,\bcurl\,\phi_p\>_{0,K} = 0$, we estimate the norm of
$\bcurl\,\phi_p$ by using (\ref{E_3^p_2}) and by employing the results
of the first two steps:
\be \label{stab9}
    \|\bcurl\,\phi_p\|_{0,K} \le
    \|\bu - \bu_1 - \bu_2^p\|_{0,K} \le
    C\left(\|\bu\|_{\bH^{r}(K)} + \|\div\,\bu\|_{\tH^{-1/2}(K)}\right).
\ee
Combining (\ref{stab7})--(\ref{stab9}) and applying the triangle inequality
we obtain, by making use of decomposition (\ref{stab6_1}),
\[
  \|\bu_3^p\|_{0,K} + \|\div\,\bu_3^p\|_{\tH^{-1/2}(K)} \le
  C \left(\|\bu\|_{\bH^{r}(K)} + \|\div\,\bu\|_{\tH^{-1/2}(K)}\right).
\]
The boundedness of the operator $\Pi_p^{\div,-\frac 12}$ (and
inequality (\ref{stab})) now follows by putting together the results
of the three individual steps and by applying the triangle inequality.

The polynomial-preserving property of the operator $\Pi_p^{\div,-\frac 12}$
easily follows from its definition.~\qed

It is essential for the proof of Theorem~\ref{thm_deRham} given below
that the $\tH^{-1/2}(K)$-inner product satisfies (\ref{key_ip_property})
(i.e., reduces to the $L^2(K)$-inner product for a constant function).
As it follows from Lemma~\ref{lm_tilde_-1/2_ip} in the Appendix,
the $\tH^{-1/2}(K)$-inner product given by (\ref{tilde_-1/2-ip})
satisfies this property.

\bigskip

\noindent{\bf Proof of Theorem~\ref{thm_deRham}.}
To prove the first part of the diagram, we consider $\bu = \bcurl\,g$, $g \in H^{1+r}(K)$.
Let us decompose $\Pi^{1}_p g$ and $\Pi^{\div,-\frac 12}_p \bu$ as in (\ref{int_4}) and
(\ref{E^p}), respectively.
Then it follows from the definitions of these interpolation operators that
$\bu_1 = \bcurl\,g_1$ and $\bu_2^p = \bcurl\,g_2^p$ (cf. \cite{DemkowiczB_03_pIE}).
Hence, $\div\,\bu = \div\,\bu_1 = \div\,\bu_2^p = 0$ and it follows from
(\ref{E_3^p_1}) that $\div\,\bu_3^p = 0$. Therefore, decomposing $\bu_3^p$ as in
(\ref{stab6_1}) and comparing (\ref{E_3^p_2}) with (\ref{int_3}), we conclude
that $\bu_3^p = \bcurl\,g_3^p$.
Thus, $\Pi^{\div,-\frac 12}_p (\bcurl\,g) = \bcurl(\Pi^{1}_p g)$.

Let us prove the second part of the diagram.
For any $\varphi \in \CP_{p-1}(K)$ there exists $\bv_p \in \bCP^{\rm RT}_p(K)$
such that $\div\,\bv_p = \varphi$. Therefore, decomposing $\Pi^{\div,-\frac 12}_p \bu$
as in (\ref{E^p}), we need to show that for all $\bv_p \in \bCP^{\rm RT}_p(K)$
there holds
\be \label{commute1}
    \Big\<\div\Big(\bu - \Pi^{\div,-\frac 12}_p \bu\Big), \div\,\bv_p\Big\>_{\tH^{-1/2}(K)} =
    \<\div(\bu - (\bu_1 + \bu_3^p)), \div\,\bv_p\>_{\tH^{-1/2}(K)} = 0.
\ee
Let us also decompose $\bv_p = \Pi^{\div,-\frac 12}_p \bv_p \in \bCP_p^{\rm RT}(K)$
as in (\ref{E^p}):
\[
  \bv_p = \bv_1 + \bv_2^p + \bv_3^p,\quad
  \div\,\bv_1 = \hbox{const},\quad \div\,\bv_2^p = 0,\quad
  \bv_3^p \in \bCP^{\rm RT,0}_p(K).
\]
Then, recalling (\ref{E_3^p_1}), applying Lemma~\ref{lm_tilde_-1/2_ip},
and integrating by parts, we prove (\ref{commute1}):
\beas
     \lefteqn{
     \Big\<\div\Big(\bu - \Pi^{\div,-\frac 12}_p \bu\Big), \div\,\bv_p\Big\>_{\tH^{-1/2}(K)}
     }
     \\
     &\qquad\qquad = &
     \<\div(\bu - \bu_1 - \bu_3^p), \hbox{const}\>_{\tH^{-1/2}(K)} +
     \Big\<\div\Big(\bu - \Pi^{\div,-\frac 12}_p \bu\Big), \div\,\bv_3^p\Big\>_{\tH^{-1/2}(K)}
     \\
     &\qquad\qquad = &
     \<\div(\bu - \bu_1 - \bu_3^p), \hbox{const}\>_{0,K} =
     \hbox{const}\, \int\limits_{\partial K} (\bu - \bu_1 - \bu_3^p) \cdot \bn\,d\sigma = 0.
\eeas
For the last step we used the fact that $\bu_3^p \cdot \bn|_{\partial K} = 0$
and then applied (\ref{E-E_1}).\qed

For the proof of Theorem~\ref{thm_err_estimate} we will need two
auxiliary results regarding the new $\bH(\div)$-conforming
interpolation operator $\Pi^{\div,-\frac 12}_p$.
These results are formulated in the next two lemmas:
the first one concerns the normal trace of the interpolant
$\Pi^{\div,-\frac 12}_p\,\bu$ on the boundary $\partial K$,
and the second one states some auxiliary error estimates for $\Pi^{\div,-\frac 12}_p$.

\begin{lemma} \label{lm_aux1}
Let $\bu \in \bH^r(K) \cap \tilde\bH^{-1/2}(\div,K)$ with $r > 0$, and let 
$\bu^p = \Pi^{\div,-\frac 12}_p \bu \in \bCP^{\rm RT}_p(K)$. Then for any edge
$\ell \subset \partial K$ there holds
\be \label{aux1}
    \|(\bu - \bu^p) \cdot \bn\|_{\tH^{-1}(\ell)} \le
    C\,p^{-1/2}\,\|(\bu - \bu^p) \cdot \bn\|_{H^{-1/2}(\partial K)}.
\ee
\end{lemma}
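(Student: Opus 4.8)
The quantity on the left is a negative-order norm on a single edge $\ell$, while the right-hand side is a negative-order norm over the whole boundary $\partial K$; the claim is that restricting to one edge gains a factor $p^{-1/2}$. The natural mechanism for such a gain is a polynomial inverse estimate (Lemma~\ref{lm_inverse}) combined with the fact that $(\bu-\bu^p)\cdot\bn$ has a very specific structure on each edge. The key observation is that the \emph{interior} bubble $\bu_3^p$ has vanishing normal trace on $\partial K$ (it lives in $\bCP^{\rm RT,0}_p(K)$), so on each edge $\ell$ the normal trace of the interpolant is determined entirely by $\bu_1$ and $\bu_2^p$. By the construction of $\bu_1$ (see (\ref{E_1})) and $\bu_2^p$ via the edge projection $\psi_2^\ell$ (see (\ref{psi_2^l})--(\ref{E_2^p})), the tangential derivative $\partial_\sigma\psi_2^\ell$ equals the normal trace of $\bu_2^p$, and the edge projection was defined precisely so that $\psi|_\ell-\psi_2^\ell$ is $\tH^{1/2}(\ell)$-orthogonal to all of $\CP_p^0(\ell)$.

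\textbf{First steps.}
First I would reduce $(\bu-\bu^p)\cdot\bn|_\ell$ to a purely edgewise object. Since $\bu_3^p\cdot\bn|_{\partial K}=0$, on $\ell$ we have $(\bu-\bu^p)\cdot\bn = (\bu-\bu_1)\cdot\bn - \partial_\sigma\psi_2^\ell$, which by (\ref{psi}) equals $\partial_\sigma(\psi-\psi_2^\ell)|_\ell$. Thus the edge error of the normal trace is the tangential derivative of the scalar edge error $\psi|_\ell-\psi_2^\ell$. This lets me trade the $\tH^{-1}(\ell)$-norm of the normal trace for the $\tH^0(\ell)=L^2(\ell)$-type control of $\psi|_\ell-\psi_2^\ell$, since differentiation along the edge lowers the Sobolev order by one: $\|\partial_\sigma w\|_{\tH^{-1}(\ell)} \le C\,\|w\|_{\tH^0(\ell)}$ for $w$ vanishing appropriately at the endpoints.

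\textbf{The main estimate.}
Next I would exploit the orthogonality defining $\psi_2^\ell$. The projection (\ref{psi_2^l}) makes $\psi|_\ell-\psi_2^\ell$ orthogonal to $\CP_p^0(\ell)$ in the $\tH^{1/2}(\ell)$-inner product; equivalently $\psi_2^\ell$ is the $\tH^{1/2}(\ell)$-best approximation. Applying Lemma~\ref{lm_inverse} on the edge (a one-dimensional inverse inequality between $\tH^{1/2}$ and $\tH^0$, or directly the duality-plus-best-approximation argument as in Lemma~\ref{lm_tH^{-1/2}_estimate}) produces the gain of $p^{-1/2}$ when passing from the $H^{-1/2}(\partial K)$-level control of the normal trace down to the edge $L^2$-level of $\psi-\psi_2^\ell$. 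Concretely I expect to bound $\|\psi|_\ell-\psi_2^\ell\|_{\tH^0(\ell)} \le C\,p^{-1/2}\,\|\psi|_\ell-\psi_2^\ell\|_{\tH^{1/2}(\ell)}$ via the inverse inequality, then relate the right-hand side back to $\|(\bu-\bu_1)\cdot\bn\|_{H^{-1/2}(\ell)}$ using the isomorphism $\partial_\sigma:H^{1/2}(\partial K)/\field{R}\to H^{-1/2}_*(\partial K)$ from \cite[Lemma~2]{DemkowiczB_03_pIE} (already invoked in the stability proof), and finally to $\|(\bu-\bu^p)\cdot\bn\|_{H^{-1/2}(\partial K)}$.

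\textbf{Main obstacle.}
The delicate point is the interplay between the local (single-edge) $\tH$-norms and the global $H^{-1/2}(\partial K)$-norm: the spaces $\tH^s(\ell)$ and $H^s(\partial K)$ do not restrict to one another cleanly at the fractional order $s=\pm\tfrac12$, where the zero-extension and restriction operators are only \emph{bounded} (not isometric) and the endpoint behaviour of $\psi_2^\ell$ matters. I expect the bulk of the technical work to be justifying that the inverse inequality can legitimately be applied at the half-integer order on the edge, and that the $p^{-1/2}$ factor survives the transition between the local $\tH^{1/2}(\ell)$ estimate and the global $H^{-1/2}(\partial K)$ norm, without picking up uncontrolled $p$-dependence from the norm equivalences in (\ref{stab3}) or from the polynomial extension. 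That coupling of the edgewise inverse estimate with the global trace norm is where the real content of the lemma lies.
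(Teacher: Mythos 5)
Your proposal sets out to prove the estimate from scratch, whereas the paper's own proof is essentially a two-line reduction: it cites \cite[Lemma~3.3]{BespalovH_hpA}, where the identical inequality is proved for the operator $\Pi_p^{\div}$, and observes that $\Pi_p^{\div}\bu\cdot\bn = \Pi_p^{\div,-\frac12}\bu\cdot\bn = (\bu_1+\bu_2^p)\cdot\bn$ on $\partial K$ (both interior bubbles have vanishing normal trace, and $\bu_1$, $\bu_2^p$ are identical in the two constructions), so the published edge estimate carries over verbatim once one checks --- as Steps 1 and 2 of the proof of Theorem~\ref{thm_stab} do --- that $\bu_1$ and $\bu_2^p$ remain well defined under the weaker regularity $\bu \in \bH^r(K)\cap\tilde\bH^{-1/2}(\div,K)$. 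Your structural observations (the normal trace of the interpolant is determined by $\bu_1+\bu_2^p$; the edge error equals $\partial_\sigma(\psi|_\ell-\psi_2^\ell)$; the $p^{-1/2}$ gain must come from the $\tH^{1/2}(\ell)$-orthogonality in (\ref{psi_2^l})) are exactly the ingredients underlying that cited lemma, so the plan is pointed in the right direction.

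However, as a standalone proof the proposal has two genuine problems. First, the mechanism you name for the central step is wrong: Lemma~\ref{lm_inverse} (and any inverse inequality) bounds a \emph{higher} norm by a \emph{lower} norm at the cost of a \emph{positive} power of $p$; it can never produce the factor $p^{-1/2}$ in $\|\psi|_\ell-\psi_2^\ell\|_{0,\ell}\le C\,p^{-1/2}\,\|\psi|_\ell-\psi_2^\ell\|_{\tH^{1/2}(\ell)}$, and for a general function no such bound holds at all. The correct mechanism is the one you mention only parenthetically: an Aubin--Nitsche duality argument that represents the $L^2(\ell)$ (or $\tH^{-1}(\ell)$, after integrating $\partial_\sigma$ by parts against a test function) pairing through the $\tH^{1/2}(\ell)$-inner product of Remark~\ref{rem_edge-norms}, inserts an arbitrary $\chi\in\CP_p^0(\ell)$ using the orthogonality (\ref{psi_2^l}), and gains $p^{-1/2}$ from best approximation of the (half-order smoother) dual datum. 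Second, the closing step --- passing from $\|\psi|_\ell-\psi_2^\ell\|_{\tH^{1/2}(\ell)}$ back to $\|(\bu-\bu^p)\cdot\bn\|_{H^{-1/2}(\partial K)}$ --- is precisely the delicate local-versus-global issue you flag as the ``main obstacle,'' and you leave it unresolved; restriction from $H^{1/2}(\partial K)$ to $\tH^{1/2}(\ell)$ is not bounded, so this requires a genuine argument (this is handled in the cited \cite[Lemma~3.3]{BespalovH_hpA}). As written, the proposal is therefore an outline with one incorrectly attributed key step and one acknowledged gap, rather than a complete proof; the paper avoids both issues by reducing to the published result.
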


\begin{proof}
If $\bu \in \bH^r(K) \cap \bH(\div,K)$ with $r > 0$,
then it was proved in \cite[Lemma~3.3]{BespalovH_hpA} that
\[
  \|(\bu - \Pi_p^{\div} \bu) \cdot \bn\|_{\tH^{-1}(\ell)} \le
  C\,p^{-1/2}\,\|(\bu - \Pi_p^{\div} \bu) \cdot \bn\|_{H^{-1/2}(\partial K)}.
\]
We note, however, that 
$\Pi^{\div}_p \bu \cdot \bn = \Pi^{\div,-1/2}_p \bu \cdot \bn =
 (\bu_1 +\bu_2^p) \cdot \bn$ on the boundary $\partial K$, and,
as it follows from the proof of Theorem~\ref{thm_stab} above,
$\bu_1$ and $\bu_2^p$ are in fact well defined for
$\bu \in \bH^r(K) \cap \tilde\bH^{-1/2}(\div,K)$, $r > 0$.
Therefore, the proof of Lemma~3.3 in~\cite{BespalovH_hpA}
carries over to the case considered in this paper and inequality
(\ref{aux1}) is valid.
\end{proof}

\begin{lemma} \label{lm_aux_err}
Let $r > 0$ and $s > \max\{-\frac 12,r-1\}$.
If $\bu \in \bH^r(K)$ and $\div\,\bu \in H^s(K)$, then
\be \label{aux_err_1}
    \|\div(\bu - \Pi^{\div,-\frac 12}_p\,\bu)\|_{\tH^{-1/2}(K)} \le
    C\,p^{-(1/2+s)}\,\|\div\,\bu\|_{H^s(K)}
\ee
and for any $\eps > 0$ there holds
\be \label{aux_err_2}
    \|\bu - \Pi^{\div,-\frac 12}_p\,\bu\|_{0,K} \le
    C \left(
            p^{-r}\,\|\bu\|_{\bH^r(K)} +
            \eps^{-1}\,p^{-(s+1/2-\eps)}\|\div\,\bu\|_{H^s(K)}
     \right).
\ee
The positive constants $C$ in {\rm (\ref{aux_err_1})} and {\rm (\ref{aux_err_2})}
are independent of $\bu$ and $p$.
\end{lemma}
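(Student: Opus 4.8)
The plan is to prove the two assertions separately; (\ref{aux_err_1}) is an immediate corollary of the commuting diagram, while (\ref{aux_err_2}) carries the real work. For (\ref{aux_err_1}), Theorem~\ref{thm_deRham} gives $\div(\Pi^{\div,-\frac 12}_p\bu) = \Pi^{-1/2}_{p-1}(\div\,\bu)$, so $\div(\bu - \Pi^{\div,-\frac 12}_p\bu) = \div\,\bu - \Pi^{-1/2}_{p-1}(\div\,\bu)$. Since $\div\,\bu \in H^s(K)$ with $s > -\frac 12$, I would apply Lemma~\ref{lm_tH^{-1/2}_estimate} to $\phi = \div\,\bu$ (with $p$ replaced by $p-1$), which yields exactly $\|\div\,\bu - \Pi^{-1/2}_{p-1}(\div\,\bu)\|_{\tH^{-1/2}(K)} \le C\,p^{-(1/2+s)}\|\div\,\bu\|_{H^s(K)}$.

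For (\ref{aux_err_2}) I would first split $\bu$ by the regular decomposition of Lemma~\ref{lm_Poincare}, writing $\bu = \bcurl\,\psi + \bv$ with $\bv := R(\div\,\bu) \in \bH^{s+1}(K)$, $\psi \in H^{1+r}(K)$, $\div\,\bv = \div\,\bu$, and $\|\psi\|_{H^{1+r}(K)} \le C\|\bu\|_{\bH^r(K)}$ (this uses $s \ge r-1$). By linearity of $\Pi^{\div,-\frac 12}_p$ together with the commuting identity $\Pi^{\div,-\frac 12}_p\bcurl\,\psi = \bcurl\,\Pi^1_p\psi$ from Theorem~\ref{thm_deRham},
\[
  \bu - \Pi^{\div,-\frac 12}_p\bu = \bcurl(\psi - \Pi^1_p\psi) + (\bv - \Pi^{\div,-\frac 12}_p\bv).
\]
The solenoidal term is handled by Proposition~\ref{pr_old_estimates}(i): $\|\bcurl(\psi - \Pi^1_p\psi)\|_{0,K} \le |\psi - \Pi^1_p\psi|_{H^1(K)} \le C\,p^{-r}\|\psi\|_{H^{1+r}(K)} \le C\,p^{-r}\|\bu\|_{\bH^r(K)}$, which produces the first term on the right of (\ref{aux_err_2}).

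The remaining estimate of $\|\bv - \Pi^{\div,-\frac 12}_p\bv\|_{0,K}$ is the heart of the argument, and here I would exploit that $\Pi^{\div,-\frac 12}_p$ preserves $\bCP^{\rm RT}_p(K)$ (Theorem~\ref{thm_stab}). Choosing the comparison polynomial $\bq_p := R(\Pi^{-1/2}_{p-1}(\div\,\bu)) \in \bCP^{\rm RT}_p(K)$ (well defined by (R3)), polynomial preservation yields $\bv - \Pi^{\div,-\frac 12}_p\bv = (\bv - \bq_p) - \Pi^{\div,-\frac 12}_p(\bv - \bq_p)$, so that for a small $\eps > 0$ Theorem~\ref{thm_stab} (applied at regularity index $\eps$) gives
\[
  \|\bv - \Pi^{\div,-\frac 12}_p\bv\|_{0,K} \le \|\bv - \bq_p\|_{0,K} + C(\eps)\big(\|\bv - \bq_p\|_{\bH^\eps(K)} + \|\div(\bv - \bq_p)\|_{\tH^{-1/2}(K)}\big).
\]
The reason for picking $\bq_p$ inside the range of $R$ is that $\bv - \bq_p = R(h)$ with $h := \div\,\bu - \Pi^{-1/2}_{p-1}(\div\,\bu)$; then (R1) gives $\div(\bv - \bq_p) = h$, and the continuity (R2) reduces every norm of $\bv - \bq_p$ to a norm of the scalar projection error $h$, namely $\|\bv - \bq_p\|_{0,K} \le C\|h\|_{H^{-1}(K)}$ and $\|\bv - \bq_p\|_{\bH^\eps(K)} \le C\|h\|_{H^{\eps-1}(K)}$. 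Since $H^{-1}(K)$ and $H^{\eps-1}(K)$ (for $\eps \le \frac 12$) are weaker than $\tH^{-1/2}(K)$, all three contributions are dominated by $\|h\|_{\tH^{-1/2}(K)} \le C\,p^{-(s+1/2)}\|\div\,\bu\|_{H^s(K)}$ from Lemma~\ref{lm_tH^{-1/2}_estimate}, giving $\|\bv - \Pi^{\div,-\frac 12}_p\bv\|_{0,K} \le C(\eps)\,p^{-(s+1/2)}\|\div\,\bu\|_{H^s(K)}$.

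The main obstacle, and the source of the $\eps$-dependent factor in (\ref{aux_err_2}), is that $\Pi^{\div,-\frac 12}_p$ cannot be applied at regularity $0$, so Theorem~\ref{thm_stab} must be invoked with a strictly positive index $\eps$ whose constant $C(\eps)$ degrades as $\eps \to 0$; tracking this $r$-dependence (behaving like $\eps^{-1}$) produces the factor $\eps^{-1}$, and the trivial slack $p^{-(s+1/2)} \le p^{-(s+1/2-\eps)}$ yields the stated exponent. I expect the only delicate points to be this bookkeeping of the $\eps$-dependence and the verification that $h \in \tH^{-1/2}(K)$ (which holds because $\div\,\bu \in \tH^{-1/2}(K)$ is already required to form $\Pi^{\div,-\frac 12}_p\bu$), so that the stability theorem genuinely applies to $\bv - \bq_p$; everything else follows from the continuity and polynomial-preservation properties of $R$ and from Lemma~\ref{lm_tH^{-1/2}_estimate}.
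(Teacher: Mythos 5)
Your proof is correct in substance and, for estimate (\ref{aux_err_1}) and for the overall structure of (\ref{aux_err_2}) (the Poincar\'e-type splitting $\bu = \bcurl\,\psi + \bv$ via Lemma~\ref{lm_Poincare}, commutativity, and Proposition~\ref{pr_old_estimates}(i) for the solenoidal part), it coincides with the paper's argument. Where you genuinely diverge is the treatment of $\|\bv - \Pi_p^{\div,-\frac 12}\bv\|_{0,K}$. The paper inserts an \emph{arbitrary} $\bv_p \in (\CP_{p-1}(K))^2$, invokes stability at a fixed regularity index $\tilde\eps$, and then pays twice: once through the norm equivalence $\|\cdot\|_{\tH^{-1/2+\eps}(K)} \le C\eps^{-1}\|\cdot\|_{H^{-1/2+\eps}(K)}$ (Lemma~5 of the cited Heuer reference), which is where the explicit $\eps^{-1}$ comes from, and once through approximating $\bv$ in $\bH^{1/2+\eps}(K)$ rather than $\bH^{1/2}(K)$, which is where the exponent degrades to $s+\frac 12-\eps$. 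Your choice of the \emph{specific} competitor $\bq_p = R(\Pi^{-1/2}_{p-1}(\div\,\bu))$ is slicker: since $\bv - \bq_p = R(h)$ with $h$ the scalar projection error, properties (R1)--(R3) convert every norm you need into a norm of $h$ that is dominated by $\|h\|_{\tH^{-1/2}(K)}$, and Lemma~\ref{lm_tH^{-1/2}_estimate} then gives the clean rate $p^{-(s+1/2)}$ with no loss in the exponent and no need for the external norm-equivalence lemma. What you pay for instead is the constant: your $\eps$-dependence sits entirely inside the stability constant of Theorem~\ref{thm_stab} applied at regularity index $\eps$, and your assertion that this constant behaves like $\eps^{-1}$ is not justified -- the paper never tracks how the constant in (\ref{stab}) depends on $r$ as $r\to 0$, and in its proof that dependence enters through embeddings such as $H_0^{1/2+s}(\ell) \hookrightarrow \tH^{1/2}(\ell)$ whose constants blow up at an unquantified rate. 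Since the lemma as stated only claims $C$ independent of $\bu$ and $p$, and since it is applied downstream only with $\eps = \frac 12$ fixed, this is harmless here, but you should either prove the $\eps^{-1}$ behaviour of the stability constant or state honestly that your constant depends on $\eps$ in an uncontrolled way. Two minor points worth a sentence each in a written version: (R1) is stated in the paper only for $\psi \in H^r(K)$ with $r \ge 0$, so applying $\div\circ R = \mathrm{id}$ to $h$ (which may only lie in $H^s(K)$ with $s<0$) needs a one-line density extension; and you should record explicitly that $\tH^{-1/2}(K) \hookrightarrow H^{\eps-1}(K)$ for $\eps \le \frac 12$, which is the duality fact your chain of inequalities rests on.
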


\begin{proof}
Estimate (\ref{aux_err_1}) is an immediate consequence of
the commuting diagram property (\ref{deRham}) and
Lemma~\ref{lm_tH^{-1/2}_estimate}:
\[
  \|\div\,\bu - \div(\Pi^{\div,-\frac 12}_p\,\bu)\|_{\tH^{-1/2}(K)} =
  \|\div\,\bu - \Pi^{-1/2}_{p-1}(\div\,\bu)\|_{\tH^{-1/2}(K)} \le
  C\,p^{-(1/2+s)}\,\|\div\,\bu\|_{H^s(K)}.
\]
Let us now prove (\ref{aux_err_2}). For $p = 1$ this estimate follows trivially from
Theorem~\ref{thm_stab}. Let $p \ge 2$.
Using Lemma~\ref{lm_Poincare} we decompose $\bu$ as follows:
\be \label{aux_err_3}
    \bu = \bcurl\,\psi + \bv,\qquad
    \psi \in H^{r+1}(K),\ \ \bv \in \bH^{s+1}(K).
\ee
Moreover, the norms of $\bv$ and $\psi$ are bounded as in (\ref{P_2}).
Then, applying the interpolation operator $\Pi_p^{\div,-\frac 12}$
and using its commutativity with $\Pi^1_{p}$ (see (\ref{deRham})), we write
\be \label{aux_err_4}
    \Pi_p^{\div,-\frac 12}\,\bu =
    \Pi_p^{\div,-\frac 12}(\bcurl\,\psi) + \Pi_p^{\div,-\frac 12}\,\bv =
    \bcurl(\Pi_{p}^1 \psi) + \Pi_p^{\div,-\frac 12}\,\bv.
\ee
Since $\Pi_p^{\div,-\frac 12}$ is a bounded operator preserving polynomials
(see Theorem~\ref{thm_stab}), one has for any polynomial
$\bv_p \in (\CP_{p-1}(K))^2 \subset \bCP_p^{\rm RT}(K)$:
\bea
     \|\bv - \Pi_p^{\div,-\frac 12}\,\bv\|_{0,K}
     & = &
     \|\bv - \bv_p - \Pi_p^{\div,-\frac 12}(\bv - \bv_p)\|_{0,K}
     \nonumber
     \\[5pt]
     & \le &
     C\, \inf_{\bv_p \in (\CP_{p-1}(K))^2}
     \Big(
          \|\bv - \bv_p\|_{\bH^{\tilde\eps}(K)} + \|\div(\bv - \bv_p)\|_{\tH^{-1/2}(K)}
     \Big)
     \nonumber
     \\[5pt]
     & \le &
     C\,\eps^{-1}\,\inf_{\bv_p \in (\CP_{p-1}(K))^2} \|\bv - \bv_p\|_{\bH^{1/2+\eps}(K)},
     \label{aux_err_5}
\eea
where $\tilde\eps \in (0,\frac 12)$ is fixed, $\eps > 0$ is arbitrarily small, and
for the last step we used Lemma~5 of \cite{Heuer_01_ApS} as well as the boundedness
of the divergence operator to estimate
\beas
     \|\div(\bv - \bv_p)\|_{\tH^{-1/2}(K)}
     & \le &
     \|\div(\bv - \bv_p)\|_{\tH^{-1/2+\eps}(K)}
     \\[3pt]
     & \le &
     C\,\eps^{-1}\,\|\div(\bv - \bv_p)\|_{H^{-1/2+\eps}(K)} \le
     C\,\eps^{-1}\,\|\bv - \bv_p\|_{\bH^{1/2+\eps}(K)}.
\eeas
Applying now Lemma~\ref{lm_2D_p-approx} componentwise and using the first
inequality in (\ref{P_2}), we deduce from (\ref{aux_err_5}) that
\be \label{aux_err_6}
    \|\bv - \Pi_p^{\div,-\frac 12}\,\bv\|_{0,K} \le
    C\,\eps^{-1}\,(p-1)^{-(s+1/2-\eps)}\,\|\bv\|_{\bH^{s+1}(K)} \le
    C\,\eps^{-1}\,p^{-(s+1/2-\eps)}\,\|\div\,\bu\|_{H^s(K)}.
\ee
On the other hand, applying Proposition~\ref{pr_old_estimates}(i) and
the second inequality in (\ref{P_2}) we obtain
\be \label{aux_err_7}
    \|\bcurl(\psi - \Pi_{p}^{1}\psi)\|_{0,K} =
    |\psi - \Pi_{p}^{1}\psi|_{H^1(K)} \le
    C\,p^{-r}\,\|\psi\|_{H^{1+r}(K)} \le
    C\,p^{-r}\,\|\bu\|_{\bH^{r}(K)}.
\ee
Combining (\ref{aux_err_6}) and (\ref{aux_err_7}) we prove
(\ref{aux_err_2}) by making use of decompositions (\ref{aux_err_3}),
(\ref{aux_err_4}) and the triangle inequality.
\end{proof}

Now we are in a position to prove the main interpolation error estimate.

\bigskip

\noindent{\bf Proof of Theorem~\ref{thm_err_estimate}.}
For simplicity of notation we denote
$\bu^p := \Pi^{\div,-\frac 12}_p\,\bu \in \CP^{\rm RT}_p(K)$.
Let us consider an auxiliary problem: find $\bu_0 \in \bH(\div,K)$ such that
\bea
    & \<\bu - \bu_0,\bv\>_{0,K} + \<\div(\bu - \bu_0),\div\,\bv\>_{0,K} = 0
    \quad
    \forall\,\bv \in \bH_0(\div,K), &
    \label{estim_proof_1}
    \\[5pt]
    & \bu_0 \cdot \bn = \bu^p \cdot \bn\qquad
    \hbox{on $\partial K$}. &
    \nonumber
\eea
Then, using Lemma~4.8 in \cite{BuffaC_03_EFI} and applying
Lemmas~\ref{lm_aux1} and~\ref{lm_trace}, we estimate for
$t={-}1,\,-\frac 12$
\bea
    \|\bu - \bu_0\|_{\tilde\bH^{t+1/2}(\div,K)}
    & \le &
    C\,\|(\bu - \bu^p) \cdot \bn\|_{H^{t}(\partial K)} \le
    C\,p^{t+1/2}\,\|(\bu - \bu^p) \cdot \bn\|_{H^{-1/2}(\partial K)}\ \qquad
    \nonumber
    \\
    & \le &
    C\,p^{t+1/2}\left(
                      \|\bu {-} \bu^p\|_{0,K} + \|\div(\bu {-} \bu^p)\|_{\tH^{-1/2}(K)}
               \right).
    \label{estim_proof_2}
\eea
By the triangle inequality one has
\be \label{estim_proof_3}
    \|\bu - \bu^p\|_{\tilde\bH^{-1/2}(\div,K)} \le
    \|\bu - \bu_0\|_{\tilde\bH^{-1/2}(\div,K)} +
    \|\bu_0 - \bu^p\|_{\tilde\bH^{-1/2}(\div,K)}.
\ee
For the first term on the right-hand side of (\ref{estim_proof_3})
we have by using (\ref{estim_proof_2}) with $t = -1$:
\be \label{estim_proof_4}
    \|\bu - \bu_0\|_{\tilde\bH^{-1/2}(\div,K)} \le
    C\,p^{-1/2}\left(
                     \|\bu - \bu^p\|_{0,K} + \|\div(\bu - \bu^p)\|_{\tH^{-1/2}(K)}
              \right).
\ee
Now, we consider the second term on the right-hand side of (\ref{estim_proof_3})
and prove that
\be \label{estim_proof_5}
    \|\bu_0 - \bu^p\|_{\tilde\bH^{-1/2}(\div,K)} \le
    C \left(
            p^{-1/2}\,\|\bu - \bu^p\|_{0,K} + \|\div(\bu - \bu^p)\|_{\tH^{-1/2}(K)}
     \right).
\ee
Denote $\bX := \tilde\bH^{-1/2}_0(\div,K)$, and let
$\bX'$ be the dual space of $\bX$ (with $\bL^2(K)$ as pivot space).
From \cite[Section~6]{BuffaC_01_TII} we know that any $\bw \in \bX'$ can be
decomposed as follows:
\[
  \bw = \grad\,f + \bcurl\,g,\qquad
  f \in H^{1/2}(K)/{\field{R}},\
  g \in H^1_0(K) \cap H^{3/2}(K),
\]
and
\be \label{estim_proof_6}
    \|f\|_{H^{1/2}(K)/{\field{R}}} + \|g\|_{H^{3/2}(K)} \le
    C\,\|\bw\|_{\bX'}.
\ee
Hence, recalling that $\bu_0 - \bu^p \in \bH_0(\div,K) \subset \bX$, we have
\bea
    \|\bu_0 - \bu^p\|_{\tilde\bH^{-1/2}(\div,K)}
    & = &
    \sup_{\bzero \not= \bw \in \bX'}
    {\<\bu_0 - \bu^p, \bw\>_{0,K}
    \over{\|\bw\|_{\bX'}}} =
    \sup_{\bzero \not= \bw \in \bX'}
    {\<\bu_0 - \bu^p, \grad\,f + \bcurl\,g\>_{0,K}
    \over{\|\bw\|_{\bX'}}}
    \nonumber
    \\[3pt]
    & = &
    \sup_{\bzero \not= \bw \in \bX'}
    {-\<\div(\bu_0 - \bu^p), f\>_{0,K} +
     \<\bu_0 - \bu^p, \bcurl\,g\>_{0,K}
    \over{\|\bw\|_{\bX'}}}.
    \label{estim_proof_7}
\eea
Let $g_p \in \CP_p^0(K)$. Using (\ref{E_3^p_2}) with $\phi = g_p$ and
(\ref{estim_proof_1}) with $\bv = \bcurl\,g_p$, we find that
\[
  \<\bu_0 - \bu^p,\bcurl\,g_p\>_{0,K} =
  \<\bu - \bu^p,\bcurl\,g_p\>_{0,K} -
  \<\bu - \bu_0,\bcurl\,g_p\>_{0,K}   = 0\quad
  \forall\,g_p \in \CP_p^0(K).
\]
Therefore, selecting $g_p := \Pi_p^1 g \in \CP_p^0(K)$ and using
Proposition~\ref{pr_old_estimates}(i), we obtain from (\ref{estim_proof_7})
\bea
    \lefteqn{
             \|\bu_0 - \bu^p\|_{\tilde\bH^{-1/2}(\div,K)}
            }
    \nonumber
    \\[3pt]
    & \le &
    \sup_{\bzero \not= \bw \in \bX'}
    {\|\div(\bu_0 - \bu^p)\|_{\tH^{-1/2}(K)}\, \|f\|_{H^{1/2}(K)/{\field{R}}} +
     \|\bu_0 - \bu^p\|_{0,K}\, |g - \Pi_p^1 g|_{H^1(K)}
    \over{\|\bw\|_{\bX'}}}
    \nonumber
    \\[3pt]
    & \le &
    \sup_{\bzero \not= \bw \in \bX'}
    {\|\div(\bu_0 - \bu^p)\|_{\tH^{-1/2}(K)}\, \|f\|_{H^{1/2}(K)/{\field{R}}} +
     C\,p^{-1/2}\,\|\bu_0 - \bu^p\|_{0,K}\, \|g\|_{H^{3/2}(K)}
    \over{\|\bw\|_{\bX'}}}
    \nonumber
    \\[3pt]
    & \stackrel{(\ref{estim_proof_6})}{\le} &
    C \left(
            \|\div(\bu_0 - \bu^p)\|_{\tH^{-1/2}(K)} + p^{-1/2}\,\|\bu_0 - \bu^p\|_{0,K}
     \right).
    \label{estim_proof_8}
\eea
Both norms on the right-hand side of (\ref{estim_proof_8}) are estimated by applying
the triangle inequality and inequalities (\ref{estim_proof_2})
(with $t = -1$ and $t = -\frac 12$, respectively):
\bea
    \|\div(\bu_0 - \bu^p)\|_{\tH^{-1/2}(K)}
    & \le &
    \|\bu - \bu_0\|_{\tilde\bH^{-1/2}(\div,K)} +
    \|\div(\bu - \bu^p)\|_{\tH^{-1/2}(K)}
    \nonumber
    \\
    & \le &
    C \left(
            p^{-1/2}\,\|\bu - \bu^p\|_{0,K} + \|\div(\bu - \bu^p)\|_{\tH^{-1/2}(K)}
     \right)
    \label{estim_proof_9}
\eea
and
\bea
    \|\bu_0 - \bu^p\|_{0,K}
    & \le &
    \|\bu - \bu^p\|_{0,K} +
    \|\bu - \bu_0\|_{\bH(\div,K)}
    \nonumber
    \\
    & \le &
    C \left(
            \|\bu - \bu^p\|_{0,K} + \|\div(\bu - \bu^p)\|_{\tH^{-1/2}(K)}
     \right).
    \label{estim_proof_10}
\eea
The desired inequality in (\ref{estim_proof_5}) then follows from
(\ref{estim_proof_8})-(\ref{estim_proof_10}).

Now, collecting (\ref{estim_proof_4}) and (\ref{estim_proof_5}) in (\ref{estim_proof_3}),
we obtain
\[
  \|\bu - \bu^p\|_{\tilde\bH^{-1/2}(\div,K)} \le
  C \left(
          p^{-1/2}\,\|\bu - \bu^p\|_{0,K} + \|\div(\bu - \bu^p)\|_{\tH^{-1/2}(K)}
   \right).
\]
Hence, recalling that $\bu^p = \Pi^{\div,-\frac 12}_p\,\bu$ and
applying Lemma~\ref{lm_aux_err} with $s = r$ and $\eps = \frac 12$,
we arrive at estimate (\ref{err_estimate}).\qed

%%%%%%%%%%%%%%%%%%%%%%%%%%%%%%%%%%%%%%%%%%%%%%%%%%%%%%%%%%%%%%%%%%%%%%%%%%%%%%%%
\appendix
\section{Some equivalent norms and corresponding inner products
         in the Sobolev spaces $H^r$ and $\tH^r$ for $r = \pm\frac 12$} \label{app_norms}
\setcounter{equation}{0}

In this appendix we consider the Sobolev spaces $H^r$ and $\tH^r$
on the reference element $K$ for $r = \pm\frac 12$.
We will derive expressions for norms which are
equivalent to those defined in Section~\ref{sec_spaces}.
First, let us introduce some notation.

\begin{enumerate}

\item[$1^{\circ}$.]
We denote by $D$ the polyhedron (cube or triangular prism) such that
$D = K \times (0,1)$. Thus $\partial D = \cup_{i=1}^{\CI} \bar\G_i$
($\CI=5$ if $K=T$ and $\CI=6$ if $K=Q$).
Let $K = \G_1 = \{(x_1,x_2,0);\; (x_1,x_2) \in K\}$,
$\G_{\CI} = \{(x_1,x_2,1);\; (x_1,x_2) \in K\}$, and denote
$\tilde K = \partial D \backslash \bar\G_{\CI}$.
Note that $\tilde K$ is an open surface.
We will denote by $\bnu$ the outward normal unit vector to $\partial D$, and
we will use the standard notation for the gradient $\nabla$ and for the Laplace
operator $\Delta$, both acting on scalar functions of three variables.

\item[$2^{\circ}$.]
Given $u \in H^{-1/2}(K)$, we denote by $\tilde u_K$ the solution of the mixed
problem: find $\tilde u_K \in H^1(D)$ such that
\[
  \Delta \tilde u_K = 0\ \hbox{in $D$},\quad
  \hbox{$\frac{\partial\tilde u_K}{\partial \bnu} = u$ on $K$},\quad
  \tilde u_K = 0\ \hbox{on $\partial D \backslash K$}.
\]
If $u \in H^{-1/2}(\tilde K)$, then we will use the same notation as above
with $K$ replaced by $\tilde K$.

\item[$3^{\circ}$.]
Given $u \in H^{1/2}(\partial D)$, we denote by $\tilde{\tilde u}$
its harmonic extension, i.e., the solution of the Dirichlet problem:
find $\tilde{\tilde u} \in H^1(D)$ such that
\be \label{Dir_ext}
    \Delta \tilde{\tilde u} = 0\ \hbox{in $D$},\quad
    \tilde{\tilde u} = u\ \hbox{on $\partial D$}.
\ee

\item[$4^{\circ}$.]
Given $u \in \tH^{1/2}(K)$, we denote by $u^{\circ}$ the extension
of $u$ by zero onto $\partial D$.
Thus, $u^{\circ} \in H^{1/2}(\partial D)$.

\end{enumerate}

We make use of standard definitions for the norm and the semi-norm in $H^1(D)$:
\[
   \|u\|_{H^1(D)} =
   \left(\|u\|_{0,D}^2 + |u|_{H^1(D)}^2\right)^{1/2},\quad
   |u|_{H^1(D)} = \|\grad u\|_{0,D}.
\]
Since $H^{1/2}(\partial D)$ is the trace space of $H^1(D)$,
the norm and the semi-norm in $H^{1/2}(\partial D)$ can be equivalently
written as follows
\bea
    \|u\|_{H^{1/2}(\partial D)}
    & \simeq &
    \stack{U|_{\partial D}=u}{\inf_{U \in H^1(D)}} \|U\|_{H^1(D)},
    \nonumber
    \\
    |u|_{H^{1/2}(\partial D)}
    & \simeq &
    \stack{U|_{\partial D}=u}{\inf_{U \in H^1(D)}} |U|_{H^1(D)} =
    \|\grad \tilde{\tilde u}\|_{0,D}.
    \label{1/2-seminorm}
\eea
Now we can define equivalent norms in $\tilde H^{1/2}(K)$ and $H^{1/2}(K)$:
\bea
    \|u\|_{\tH^{1/2}(K)}
    & \simeq &
    |u^\circ|_{H^{1/2}(\partial D)} \simeq
    \Big\|\grad \widetilde{\widetilde{u^\circ}}\Big\|_{0,D},
    \label{tilde_1/2-norm}
    \\
    \|u\|_{H^{1/2}(K)}
    & \simeq &
    \stack{U|_{K}=u}{\inf_{U \in \tilde H^{1/2}(\tilde K)}} \|U\|_{\tH^{1/2}(\tilde K)},
    \label{1/2-norm}
\eea
where $\|\cdot\|_{\tH^{1/2}(\tilde K)}$ is defined as in (\ref{tilde_1/2-norm}),
because $\tilde K$ is an open surface.

From (\ref{tilde_1/2-norm}) one can easily derive the expression for the
corresponding $\tH^{1/2}(K)$-inner product.
In fact, applying the parallelogram law twice, integrating by parts, and
recalling notations $3^\circ$, $4^\circ$, we find
(see also \cite{DemkowiczB_03_pIE})
\bea
    \<u,v\>_{\tH^{1/2}(K)}
    & = &
    \Big\<\grad \ttu,\grad \ttv\Big\>_{0,D} =
    \Big\<\frac{\partial\ttu}{\partial \bnu},\ttv\Big\>_{0,\partial D} =
    \nonumber
    \\[5pt]
    & = &
    \Big\<\frac{\partial\ttu}{\partial \bnu},v\Big\>_{0,K} =
    \Big\<u,\frac{\partial\ttv}{\partial \bnu}\Big\>_{0,K}\quad
    \forall u,v \in \tH^{1/2}(K).
    \label{tilde_1/2-ip}
\eea

The space $H^{-1/2}(K)$ is the dual space of $\tH^{1/2}(K)$.
We prove the following result regarding an equivalent norm in $H^{-1/2}(K)$.

\begin{lemma} \label{lm_-1/2-norm}
For any $u \in H^{-1/2}(K)$ there holds
\be \label{-1/2-norm}
    \|u\|_{H^{-1/2}(K)} \simeq \|\grad \tilde u_K\|_{0,D}.
\ee
The $H^{-1/2}$-inner product corresponding to the norm on the
right-hand side of {\rm (\ref{-1/2-norm})} reads as
\be \label{-1/2_ip}
    \<u,v\>_{H^{-1/2}(K)} = \<u,\tilde v_K\>_{0,K} = \<\tilde u_K,v\>_{0,K}\quad
    \forall u,v \in H^{-1/2}(K).
\ee
\end{lemma}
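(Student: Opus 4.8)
The plan is to recast the mixed problem defining $\tilde u_K$ as a variational problem on the space $V := \{w \in H^1(D);\; w = 0 \hbox{ on } \partial D \setminus K\}$, and then to exploit the fact that $\tilde u_K$ is simultaneously the harmonic extension of its own Dirichlet trace. First I would integrate by parts: using $\Delta \tilde u_K = 0$, the Neumann datum $\partial \tilde u_K/\partial\bnu = u$ on $K$, and $w = 0$ on $\partial D\setminus K$, the defining problem for $\tilde u_K$ is equivalent to
\[
  \<\grad \tilde u_K, \grad w\>_{0,D} = \<u, w|_K\>_{0,K} \qquad \forall\,w \in V.
\]
The right-hand side is a bounded functional on $V$, since the trace $w \mapsto w|_K$ maps $V$ onto $\tH^{1/2}(K)$ (functions vanishing on $\partial D\setminus K$ restrict to $K$ as functions vanishing on $\partial K$) and $u \in H^{-1/2}(K) = (\tH^{1/2}(K))'$; hence Lax--Milgram applies and $\tilde u_K$ is well defined.

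The crucial observation is that, writing $g := \tilde u_K|_K \in \tH^{1/2}(K)$, the function $\tilde u_K$ is harmonic in $D$ with Dirichlet trace $g^\circ$ on $\partial D$ (notation $4^\circ$). By uniqueness for the Dirichlet problem $(\ref{Dir_ext})$ this forces $\tilde u_K = \widetilde{\widetilde{g^\circ}}$, so $(\ref{tilde_1/2-norm})$ gives $\|g\|_{\tH^{1/2}(K)} \simeq \|\grad \tilde u_K\|_{0,D}$. I would then establish $(\ref{-1/2-norm})$ by duality through $(\ref{dual_norm})$. For one inequality, testing the weak form with $w = \tilde u_K$ yields $\|\grad \tilde u_K\|_{0,D}^2 = \<u,g\>_{0,K} \le \|u\|_{H^{-1/2}(K)}\,\|g\|_{\tH^{1/2}(K)}$, and the equivalence just obtained then gives $\|\grad \tilde u_K\|_{0,D} \le C\,\|u\|_{H^{-1/2}(K)}$. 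For the reverse inequality, given $v \in \tH^{1/2}(K)$ I would use $\widetilde{\widetilde{v^\circ}} \in V$ as a test function (its trace on $K$ equals $v$), so that the weak form and Cauchy--Schwarz give
\beas
     \<u,v\>_{0,K}
     & = &
     \<\grad \tilde u_K, \grad \widetilde{\widetilde{v^\circ}}\>_{0,D}
     \\
     & \le &
     \|\grad \tilde u_K\|_{0,D}\,\|\grad \widetilde{\widetilde{v^\circ}}\|_{0,D}
     \le C\,\|\grad \tilde u_K\|_{0,D}\,\|v\|_{\tH^{1/2}(K)},
\eeas
where the last step uses $(\ref{tilde_1/2-norm})$; dividing by $\|v\|_{\tH^{1/2}(K)}$ and taking the supremum gives $\|u\|_{H^{-1/2}(K)} \le C\,\|\grad \tilde u_K\|_{0,D}$.

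For the inner product $(\ref{-1/2_ip})$, I would take the bilinear form obtained from the right-hand norm of $(\ref{-1/2-norm})$ by polarisation, namely $\<u,v\>_{H^{-1/2}(K)} = \<\grad \tilde u_K, \grad \tilde v_K\>_{0,D}$. Testing the weak form for $\tilde u_K$ with $w = \tilde v_K \in V$ gives $\<\grad \tilde u_K, \grad \tilde v_K\>_{0,D} = \<u, \tilde v_K|_K\>_{0,K}$, while the same identity with the roles of $u$ and $v$ interchanged, combined with the symmetry of the Dirichlet form, gives $\<\tilde u_K|_K, v\>_{0,K}$; together these are exactly $(\ref{-1/2_ip})$.

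I expect the only genuinely delicate points to be the identification $\tilde u_K = \widetilde{\widetilde{g^\circ}}$, which reduces the mixed-problem energy to the $\tH^{1/2}$-norm of the trace, and the surjectivity of the trace map $V \to \tH^{1/2}(K)$ used to produce admissible test functions. Once these are in hand, both the norm equivalence and the inner-product identity follow from routine duality and Cauchy--Schwarz arguments based on $(\ref{tilde_1/2-norm})$ and $(\ref{dual_norm})$.
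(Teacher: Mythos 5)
Your proposal is correct and follows essentially the same route as the paper: the integration-by-parts identity $\<\grad\tilde u_K,\grad\ttv\>_{0,D}=\<u,v\>_{0,K}$, the key identification $\tilde u_K=\widetilde{\widetilde{(\tilde u_K|_K)^\circ}}$ via uniqueness of the Dirichlet problem (\ref{Dir_ext}), Cauchy--Schwarz together with testing against the trace of $\tilde u_K$ itself for the two directions of (\ref{-1/2-norm}), and polarisation for (\ref{-1/2_ip}). The only cosmetic differences are that you phrase the construction as a Lax--Milgram problem on $V$ and split the norm equivalence into two separate inequalities rather than bounding the supremum in (\ref{dual_norm}) from both sides at once.
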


\begin{proof}
Using notations $2^\circ-4^\circ$,
we integrate by parts to obtain for any $u \in H^{-1/2}(K)$ and
any $v \in \tH^{1/2}(K)$
\[
      \Big\<\grad \tilde u_K,\grad \ttv\Big\>_{0,D} =
      \Big\<\frac{\partial\tilde u_K}{\partial \bnu},\ttv\Big\>_{0,\partial D} =
      \Big\<\frac{\partial\tilde u_K}{\partial \bnu},\ttv\Big\>_{0,K} +
      \Big\<\frac{\partial\tilde u_K}{\partial \bnu},\ttv\Big\>_{0,\partial D\backslash K} =
      \<u,v\>_{0,K}.
\]
Hence, we find from (\ref{dual_norm}) and (\ref{tilde_1/2-norm})
\be \label{-1/2-norm_aux1}
    \|u\|_{H^{-1/2}(K)} =
    \sup_{0\not=v\in\tH^{1/2}(K)}
    {\Big|\Big\<\grad \tilde u_K,\grad \ttv\Big\>_{0,D}\Big|
    \over{\|v\|_{\tH^{1/2}(K)}}} \simeq
    \sup_{0\not=v\in\tH^{1/2}(K)}
    {\Big|\Big\<\grad \tilde u_K,\grad \ttv\Big\>_{0,D}\Big|
    \over{\Big\|\grad \ttv\Big\|_{0,D}}}.
\ee
Let $w := \tilde u_K|_K$. One has $w \in \tH^{1/2}(K)$
because $\tilde u_K = 0$ on $\partial D\backslash K$. Moreover,
$w^\circ = \tilde u_K|_{\partial D}$ and, due to the uniqueness of the
solution to the Dirichlet problem (\ref{Dir_ext}), we conclude that
$\widetilde{\widetilde{w^\circ}} = \tilde u_K$. Therefore,
\be \label{-1/2-norm_aux2}
    \sup_{0\not=v\in\tH^{1/2}(K)}
    {\Big|\Big\<\grad \tilde u_K,\grad \ttv\Big\>_{0,D}\Big|
    \over{\Big\|\grad \ttv\Big\|_{0,D}}} \ge
    {\Big|\Big\<\grad \tilde u_K,\grad \widetilde{\widetilde{w^\circ}}\Big\>_{0,D}\Big|
    \over{\Big\|\grad \widetilde{\widetilde{w^\circ}}\Big\|_{0,D}}} =
    \|\grad \tilde u_K\|_{0,D}.
\ee
On the other hand, it is easy to see that
\be \label{-1/2-norm_aux3}
    \sup_{0\not=v\in\tH^{1/2}(K)}
    {\Big|\Big\<\grad \tilde u_K,\grad \ttv\Big\>_{0,D}\Big|
    \over{\Big\|\grad \ttv\Big\|_{0,D}}} \le
    \|\grad \tilde u_K\|_{0,D}.
\ee
Now (\ref{-1/2-norm}) immediately follows from 
(\ref{-1/2-norm_aux1})--(\ref{-1/2-norm_aux3}).

Using (\ref{-1/2-norm}) together with the parallelogram law we find
\[
  \<u,v\>_{H^{-1/2}(K)} =
  \Big\<\grad \tilde u_K,\grad \tilde v_K\Big\>_{0,D}\quad
  \forall u,v \in H^{-1/2}(K).
\]
Hence, integrating by parts and using notation~$2^\circ$, we derive (\ref{-1/2_ip}).
\end{proof}

The following lemma states an analogous result for the space $\tH^{-1/2}(K)$
which is the dual space of $H^{1/2}(K)$.

\begin{lemma} \label{lm_tilde-1/2-norm}
For any $u \in \tH^{-1/2}(K)$ there holds
\be \label{tilde-1/2-norm}
    \|u\|_{\tH^{-1/2}(K)} \simeq
    \Big\|\grad \widetilde{(u^\circ)}_{\tilde K}\Big\|_{0,D}.
\ee
The $\tH^{-1/2}$-inner product corresponding to the norm on the
right-hand side of {\rm (\ref{tilde-1/2-norm})} reads as
\be \label{tilde_-1/2-ip}
    \<u,v\>_{\tH^{-1/2}(K)} =
    \Big\<u,\widetilde{(v^\circ)}_{\tilde K}\Big\>_{0,K} =
    \Big\<\widetilde{(u^\circ)}_{\tilde K},v\Big\>_{0,K}\quad
    \forall u,v \in \tH^{-1/2}(K).
\ee
\end{lemma}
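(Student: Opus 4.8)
The plan is to mirror the proof of Lemma~\ref{lm_-1/2-norm}, interchanging the roles of the full Neumann face $K$ and the open surface $\tilde K$, and of the spaces $H^{\pm 1/2}$ and $\tH^{\pm 1/2}$. Since $\tH^{-1/2}(K)$ is the dual of $H^{1/2}(K)$, I would start from
\[
  \|u\|_{\tH^{-1/2}(K)} = \sup_{0\neq v\in H^{1/2}(K)} \frac{|\<u,v\>_{0,K}|}{\|v\|_{H^{1/2}(K)}},
\]
and then invoke the characterisation (\ref{1/2-norm}) of the $H^{1/2}(K)$-norm as an infimum over extensions $W \in \tH^{1/2}(\tilde K)$ with $W|_K = v$, together with (\ref{tilde_1/2-norm}) applied on $\tilde K$, to replace $\|W\|_{\tH^{1/2}(\tilde K)}$ by $\|\grad \widetilde{\widetilde{W^\circ}}\|_{0,D}$.

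The crucial step is an integration-by-parts identity. Writing $u^\circ \in H^{-1/2}(\tilde K)$ for the zero extension of $u$, characterised by $\<u^\circ,W\>_{0,\tilde K} = \<u,W|_K\>_{0,K}$ for all $W \in \tH^{1/2}(\tilde K)$ (boundedness of this functional follows from (\ref{1/2-norm})), and using that $\widetilde{(u^\circ)}_{\tilde K}$ is harmonic in $D$ with normal derivative $u^\circ$ on $\tilde K$ and vanishes on $\G_{\CI} = \partial D \backslash \tilde K$, while $\widetilde{\widetilde{W^\circ}}$ has boundary trace $W$ on $\tilde K$ and $0$ on $\G_{\CI}$, I would obtain
\[
  \Big\<\grad \widetilde{(u^\circ)}_{\tilde K}, \grad \widetilde{\widetilde{W^\circ}}\Big\>_{0,D}
  = \<u^\circ,W\>_{0,\tilde K} = \<u,v\>_{0,K}, \qquad v = W|_K,
\]
the boundary contribution on $\G_{\CI}$ dropping because $\widetilde{\widetilde{W^\circ}}$ vanishes there. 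Since the right-hand side depends only on $v=W|_K$, the infimum in the denominator and the supremum over $v$ collapse into a single supremum over $W$, giving
\[
  \|u\|_{\tH^{-1/2}(K)} \simeq
  \sup_{0\neq W\in \tH^{1/2}(\tilde K)}
  \frac{\Big|\Big\<\grad \widetilde{(u^\circ)}_{\tilde K}, \grad \widetilde{\widetilde{W^\circ}}\Big\>_{0,D}\Big|}
       {\Big\|\grad \widetilde{\widetilde{W^\circ}}\Big\|_{0,D}}.
\]
The upper bound $\le \|\grad \widetilde{(u^\circ)}_{\tilde K}\|_{0,D}$ is immediate from Cauchy--Schwarz. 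For the matching lower bound I would choose $W := \widetilde{(u^\circ)}_{\tilde K}|_{\tilde K}$; the homogeneous Dirichlet condition on $\G_{\CI}$ ensures that its trace vanishes on $\partial \tilde K$, so $W \in \tH^{1/2}(\tilde K)$ with $W^\circ = \widetilde{(u^\circ)}_{\tilde K}|_{\partial D}$, and uniqueness for the Dirichlet problem (\ref{Dir_ext}) then forces $\widetilde{\widetilde{W^\circ}} = \widetilde{(u^\circ)}_{\tilde K}$, realising the value $\|\grad \widetilde{(u^\circ)}_{\tilde K}\|_{0,D}$ and establishing (\ref{tilde-1/2-norm}).

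To obtain the inner product (\ref{tilde_-1/2-ip}), I would apply the parallelogram law to the equivalent norm just found; by linearity of the mixed-problem solution operator this yields $\<u,v\>_{\tH^{-1/2}(K)} = \<\grad \widetilde{(u^\circ)}_{\tilde K}, \grad \widetilde{(v^\circ)}_{\tilde K}\>_{0,D}$, and integrating by parts exactly as above reduces the pairing on $\tilde K$ to one on $K$ (the $\G_{\CI}$ term vanishing and $u^\circ$ being supported on $K$), giving both representations in (\ref{tilde_-1/2-ip}). The hard part will be the careful bookkeeping of the zero extension $u^\circ \in H^{-1/2}(\tilde K)$ and the sup/inf manipulation turning the quotient into a single supremum over $W$; once the integration-by-parts identity with its dual-pairing interpretation is secured, the norm equivalence and the inner-product formula follow as in Lemma~\ref{lm_-1/2-norm}.
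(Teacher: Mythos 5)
Your proof is correct and takes essentially the same route as the paper's: the paper first establishes $\|u\|_{\tH^{-1/2}(K)} \simeq \|u^\circ\|_{H^{-1/2}(\tilde K)}$ by exactly your duality/zero-extension sup--inf collapse using (\ref{1/2-norm}), and then simply cites Lemma~\ref{lm_-1/2-norm} with $K$ replaced by $\tilde K$, whereas you inline that lemma's argument (the integration-by-parts identity, the Cauchy--Schwarz upper bound, and the test function $W=\widetilde{(u^\circ)}_{\tilde K}|_{\tilde K}$ for the lower bound). The derivation of the inner product via the parallelogram law and integration by parts is identical to the paper's.
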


\begin{proof}
Let $u \in \tH^{-1/2}(K)$.
Then $u^\circ \in \tH^{-1/2}(\tilde K) \subset H^{-1/2}(\tilde K)$.
Using (\ref{dual_norm}) and (\ref{1/2-norm}) we have
\beas
     \|u^\circ\|_{H^{-1/2}(\tilde K)}
     & = &
     \sup_{0\not=w\in\tH^{1/2}(\tilde K)}
     {|\<u^\circ,w\>_{0,\tilde K}|\over{\|w\|_{\tH^{1/2}(\tilde K)}}} =
     \sup_{0\not=w\in\tH^{1/2}(\tilde K)}
     {|\<u,w\>_{0,K}|\over{\|w\|_{\tH^{1/2}(\tilde K)}}}
     \\[5pt]
     & = &
     \sup_{0\not=v\in H^{1/2}(K)}\,\,
     \stack{V|_{K}=v}{\sup_{V \in \tH^{1/2}(\tilde K)}}
     {|\<u,V\>_{0,K}|\over{\|V\|_{\tH^{1/2}(\tilde K)}}} =
     \sup_{0\not=v\in H^{1/2}(K)}
     \frac{|\<u,v\>_{0,K}|}
     {\displaystyle{
                    \stack{V|_{K}=v}{\inf_{V\in\tH^{1/2}(\tilde K)}} \|V\|_{\tH^{1/2}(\tilde K)}
                   }
     }
     \\[3pt]
     & \simeq &
     \sup_{0\not=v\in H^{1/2}(K)}
     {|\<u,v\>_{0,K}|\over{\|v\|_{H^{1/2}(K)}}} =
     \|u\|_{\tH^{-1/2}(K)}.
\eeas
Hence, using (\ref{-1/2-norm}) with $u$ replaced by $u^\circ$ and with $K$
replaced by $\tilde K$, we prove (\ref{tilde-1/2-norm}):
\[
  \|u\|_{\tH^{-1/2}(K)} \simeq \|u^\circ\|_{H^{-1/2}(\tilde K)} \simeq
  \Big\|\grad \widetilde{(u^\circ)}_{\tilde K}\Big\|_{0,D}\quad
  \forall u \in \tH^{-1/2}(K).
\]
Then, applying the parallelogram law, integrating by parts, and
making use of notations $2^\circ,\;4^\circ$, we derive (\ref{tilde_-1/2-ip}).
\end{proof}

\begin{remark} \label{rem_edge-norms}
The same arguments as above can be used to find equivalent norms and corresponding
inner products in the Sobolev spaces on any edge $\ell \subset \partial K$.
In particular, using the notation analogous to $3^\circ$ and $4^\circ$, we have
(cf. {\rm (\ref{tilde_1/2-norm}), (\ref{tilde_1/2-ip})})
\beas
     \|u\|_{\tH^{1/2}(\ell)}
     & \simeq &
     \Big\|\grad \widetilde{\widetilde{u^\circ}}\Big\|_{0,K}\quad
     \forall u \in \tH^{1/2}(\ell),
     \\
     \<u,v\>_{\tH^{1/2}(\ell)}
     & = &
     \Big\<\frac{\partial\ttu}{\partial \bn},v\Big\>_{0,\ell} =
     \Big\<u,\frac{\partial\ttv}{\partial \bn}\Big\>_{0,\ell}\quad
     \forall u,v \in \tH^{1/2}(\ell).
\eeas
\end{remark}

The next lemma states the fact that for a constant function $v$ in
(\ref{tilde_-1/2-ip}) the $\tH^{-1/2}(K)$-inner product reduces to the
$L^2(K)$-inner product.

\begin{lemma} \label{lm_tilde_-1/2_ip}
For any $u \in \tH^{-1/2}(K)$ there holds
\[
  \<u,1\>_{\tH^{-1/2}(K)} = \<u,1\>_{0,K}.
\]
\end{lemma}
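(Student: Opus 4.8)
The plan is to evaluate the inner-product formula (\ref{tilde_-1/2-ip}) at $v=1$ and then to compute the auxiliary harmonic function $\widetilde{(1^\circ)}_{\tilde K}$ in closed form. First I note that the constant function $1$ lies in $L^2(K)\subset\tH^{-1/2}(K)$, so $\langle u,1\rangle_{\tH^{-1/2}(K)}$ is well defined, and by (\ref{tilde_-1/2-ip}) it equals $\langle u,\widetilde{(1^\circ)}_{\tilde K}\rangle_{0,K}$. Here $1^\circ$ is the extension of $1$ by zero from $K$ onto $\tilde K$ (notation $4^\circ$), so that $1^\circ\equiv 1$ on $\G_1=K$ and $1^\circ\equiv 0$ on the lateral faces making up $\tilde K\setminus\bar K$; and $\widetilde{(1^\circ)}_{\tilde K}$ is the associated solution of the mixed problem of notation $2^\circ$ on $D$, with $\tilde K$ in place of $K$.

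The key step is to exhibit this solution explicitly. Writing $x_3$ for the third coordinate on $D=K\times(0,1)$, I claim that $\widetilde{(1^\circ)}_{\tilde K}=1-x_3$. Indeed, $1-x_3$ is harmonic in $D$; on the top face $\G_{\CI}$ (where $x_3=1$) it vanishes, which matches the homogeneous Dirichlet condition on $\partial D\setminus\tilde K=\bar\G_{\CI}$; on the bottom face $\G_1=K$ (where $x_3=0$, with outward normal $\bnu=(0,0,-1)$) its normal derivative equals $1$, matching $1^\circ$ there; and on the lateral faces, whose outward normals are horizontal, its normal derivative is $0$, again matching $1^\circ$. Thus $1-x_3$ satisfies exactly the mixed problem defining $\widetilde{(1^\circ)}_{\tilde K}$, and by uniqueness of its solution the two coincide.

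It then remains to read off the trace on $K$: since $K=\G_1$ sits at $x_3=0$, the trace of $1-x_3$ on $K$ is the constant $1$, whence $\langle u,1\rangle_{\tH^{-1/2}(K)}=\langle u,\widetilde{(1^\circ)}_{\tilde K}\rangle_{0,K}=\langle u,1\rangle_{0,K}$, which is the assertion (and establishes the abstract property (\ref{key_ip_property}) invoked in the proof of Theorem~\ref{thm_deRham}). I do not anticipate a genuine obstacle: the argument rests entirely on the one-dimensional guess $1-x_3$, and the only verification needed is that a function depending on $x_3$ alone automatically produces vanishing lateral Neumann data and a homogeneous top Dirichlet datum, which is immediate from the product geometry of $D$.
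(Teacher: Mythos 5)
Your proposal is correct and follows essentially the same route as the paper: both evaluate the inner product via (\ref{tilde_-1/2-ip}) and identify the auxiliary harmonic function for the constant Neumann datum explicitly as $1-x_3$, whose trace on $K=\G_1$ is $1$. Your verification of the boundary conditions (homogeneous Dirichlet on the top face, Neumann data matching $1^\circ$ on the bottom and lateral faces) is exactly the check the paper leaves implicit behind ``it is easy to see.''
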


\begin{proof}
We have by (\ref{tilde_-1/2-ip})
\be \label{lm_tilde1}
    \<u,1\>_{\tH^{-1/2}(K)} = \<u,\varphi|_K\>_{0,K},
\ee
where $\varphi(x)$ ($x = (x_1,x_2,x_3) \in D = K \times (0,1)$)
solves the following mixed problem (see (\ref{tilde_-1/2-ip})
and notations ${1^\circ}$, ${2^\circ}$, $4^\circ$):
find $\varphi \in H^1(D)$ such that
\[
  \Delta \varphi = 0\ \hbox{in $D$},\quad
  \hbox{$\frac{\partial\varphi}{\partial \bnu} = 1$ on $\G_1 = K$},\quad
  \hbox{$\frac{\partial\varphi}{\partial \bnu} = 0$ on $\G_i$ ($i = 2,\ldots,\CI-1$)},\quad
  \varphi = 0\ \hbox{on $\G_{\CI}$}.
\]
It is easy to see that $\varphi = 1 - x_3$.
Then $\varphi|_K = \varphi|_{x_3=0} = 1$ and the assertion follows
from (\ref{lm_tilde1}).
\end{proof}

%%%%%%%%%%%%%%%%%%%%%%%%%%%%%%%%%%%%%%%%%%%%%%%%%%%%%%%%%%%%%%%%%%%%%%%%%%%%%%%%

\end{document}